\author[N. Tapia]{Nikolas Tapia}
\email{tapia@wias-berlin.de}
\address{WIAS-Institut Berlin, Mohrenstr. 39., 10117 Berlin, Germany\\Technische Universität Berlin, Str. des 17. Juni 136, 10587 Berlin, Germany.}
\urladdr{https://www.wias-berlin.de/people/tapia}
\author[L. Zambotti]{Lorenzo Zambotti}
\email{zambotti@lpsm.paris}
\address{Laboratoire de Probabilités, Statistique et Modélisation, 4 place Jussieu, 75005 Paris, France}
\urladdr{http://www.lpsm.paris/pageperso/zambotti}
\title{The geometry of the space of branched rough paths}
\numberwithin{equation}{section}
\def\lrn#1{\left\vvvert#1\right\vvvert}
\newcommand{\1}{\mathbf{1}}
\newcommand{\g}{\mathfrak{g}}
\newcommand{\A}{\mathfrak{A}}
\newcommand{\C}{\mathcal{C}}
\newcommand{\F}{\mathcal{F}}
\newcommand{\N}{\mathbb{N}}
\newcommand{\R}{\mathbb R}
\newcommand{\T}{\mathcal{T}}
\newcommand{\p}{\mathfrak{p}}
\newcommand{\BR}{\bf{BRP}}
\def\dtR<#1>{\Forest{[#1]}}
\def\dtI<#1#2>{\Forest{[#1[#2]]}}
\def\dtII<#1#2#3>{\Forest{[#1[#2[#3]]]}}
\def\dtV<#1#2#3>{\Forest{[#1[#2][#3]]}}
\DeclareMathOperator{\BCH}{BCH}
\DeclareMathOperator{\id}{id}
\newtheorem{thm}{Theorem}[section]
\newtheorem{prp}[thm]{Proposition}
\newtheorem{dfn}[thm]{Definition}
\newtheorem{lmm}[thm]{Lemma}
\newtheorem{crl}[thm]{Corollary}
\theoremstyle{remark}
\newtheorem{rmk}[thm]{Remark}
\newtheorem{xmp}[thm]{Example}
\newfont{\indic}{bbmss12}
\let\bb\mathbb
\let\f\mathfrak
\let\cal\mathcal
\def\HH{\mathcal{H}}
\begin{document}

\begin{abstract}
We construct an explicit transitive free action of a Banach space of H\"older functions on the space of branched rough paths, which yields in particular a
bijection between theses two spaces.
This endows the space of branched rough paths with the structure of a principal homogeneous space over a Banach space and allows to characterize its automorphisms.
The construction is based on the Baker-Campbell-Hausdorff formula, on a constructive version of the Lyons-Victoir extension theorem and on the Hairer-Kelly map, which allows to describe branched rough paths in terms of anisotropic geometric rough paths.

\bigskip\noindent
{\it MSC (2010)}: 60H10; 16T05

\medskip\noindent
{\it Keywords}: Rough Paths; Hopf algebras; Renormalisation
\end{abstract}

\maketitle

\setlength{\parskip}{1.5ex}
\section{Introduction}

The theory of Rough Paths has been introduced by Terry Lyons in the '90s with the aim of giving an alternative construction of stochastic integration and stochastic differential equations.
More recently, it has been expanded by Martin Hairer to cover stochastic partial differential equations, with the invention of regularity structures.

A rough path and a \textsl{model} of a regularity structure are mathematical objects which must satisfy some algebraic and analytical constraints.
For instance, a rough path can be described as a H\"older function defined on an interval and taking values in a non-linear finite-dimensional Lie group; models of regularity structures are a generalization of this idea.
A crucial ingredient of regularity structures is the \textsl{renormalisation} procedure: given a family of regularized models, which fail to converge in an appropriate topology as the regularization is removed, one wants to modify it in a such a way that the algebraic and analytical constraints are still satisfied and the modified version converges.
This procedure has been obtained in \cite{BHZ,Chandra2017} for a general class of models with a \textsl{stationary} character.

The same question about rough paths has been asked recently in \cite{BCF,Bruned2017,Bruned2018}, and indeed it could have been asked much earlier.
Maybe this has not happened because the motivation was less compelling; although one can construct examples of rough paths depending on a positive parameter which do not converge as the parameter tends to $0$, this phenomenon is the exception rather than the rule.
However the problem of characterizing the automorphisms of the space of rough paths is clearly of interest; one
example is the transformation from It\^o to Stratonovich integration, see e.g. \cite{BA,EFP2,EFP1}. However our aim is to put this particular example in a much larger context.

We recall that there are several possible notions of rough paths; in particular we have \textsl{geometric} RPs and \textsl{branched} RPs, two notions defined respectively by Terry Lyons \cite{Lyons1998} and Massimiliano Gubinelli \cite{Gubinelli2010}, see \Cref{sse:RPthm,sse:bRP} below.
These two notions are intimately related to each other, as shown by Hairer and Kelly \cite{Hairer2014}, see Section \ref{sse:bRP} below.
We note that regularity structures \cite{Hairer2014d} are a natural and far-reaching generalization of branched RPs.

In this paper we concentrate on the automorphisms of the space of branched RPs, see below for a discussion of the geometric case.
Let $\F$ be the collection of all non-planar rooted forests with nodes decorated by $\{1,\dotsc,d\}$, see \Cref{sse:bRP} below.
For instance the following forest
\[ \dtI<ab>\,\Forest{decorated[i[j[k][l]][m]]} \]
is an element of $\F$. We call $\T\subset\F$ the set of \textsl{rooted trees}, namely of \textsl{non-empty forests} with a single connected component. Grading elements $\tau\in\F$ by the number $|\tau|$ of their nodes we set
\[
\T_n\coloneq\{\tau\in\T: |\tau|\leq n\},
\qquad n\in\N.
\]
Let now $\HH$ be the linear span of $\F$.
It is possible to endow $\HH$ with a product and a coproduct $\Delta\colon\HH\to\HH\otimes\HH$ which make it a Hopf algebra, also known as the Butcher-Connes-Kreimer Hopf algebra, see \Cref{sec:brp} below.
We let $ G$ denote the set of all \emph{characters} over $\HH$, that is, elements of $G$ are functionals $X\in\HH^*$ that are also multiplicative in the sense that
\[\langle X,\tau\sigma\rangle=\langle X,\tau\rangle\langle X,\sigma\rangle \]
for all forests (and in particular trees) $\tau,\sigma\in\F$.
Furthermore, the set $G$ can be endowed with a product $\star$, dual to the coproduct, defined pointwise by $\langle X\star Y,\tau\rangle = \langle X\otimes Y,\Delta\tau\rangle.$
We work on the compact interval $[0,1]$ for simplicity, and all results can
be proved without difficulty on $[0,T]$ for any $T\geq 0$.

\begin{dfn}[Gubinelli \cite{Gubinelli2010}]\label{def:brp}
  Given $\gamma\in\,]0,1[$, a branched $\gamma$-rough path is a path $X\colon[0,1]^2\to G$
  which satisfies Chen's rule
  \[
  X_{su}\star X_{ut}=X_{st}, \qquad s,u,t\in[0,1],
  \]
  and the analytical condition
  \[
  |\langle X_{st},\tau\rangle|\lesssim|t-s|^{\gamma|\tau|}, \qquad \tau\in\F.
  \]
  Setting $x_t^i\coloneq\langle X_{0t},\dtR<i>\rangle$, $t\in[0,1]$, we say that $X$ is a branched $\gamma$-rough path over the path $x=(x^1,\dotsc,x^d)$. We denote by $\BR^\gamma$ the set of all branched $\gamma$-rough paths (for a fixed finite alphabet $\{1,\dotsc,d\}$).
\end{dfn}

By introducing the \emph{reduced coproduct} $\Delta'\colon\HH\to\HH\otimes\HH$
\[
\Delta'\tau\coloneq\Delta\tau-\tau\otimes\1-\1\otimes\tau,
\]
where $\1$ denotes the empty forest, Chen's rule can we rewritten as follows
\begin{equation}\label{redcop}
\delta \langle X,\tau\rangle_{sut} = \langle X_{su}\otimes X_{ut},\Delta'\tau\rangle,   \qquad s,u,t\in[0,1],
\end{equation}
where for $F\colon[0,1]^2\to\R$ we set $\delta F\colon[0,1]^3\to\R$,
\begin{equation}\label{eq:delta}
\delta F_{sut} \coloneq F_{st}-F_{su}-F_{ut},
\end{equation}
which is the second order finite increment considered by Gubinelli \cite{Gubinelli2003}. Note that
the right-hand side of \eqref{redcop} depends on the values of $X$ on trees with strictly fewer nodes
than $\tau$; if we can invert the operator $\delta$, then the right-hand side of \eqref{redcop}
determines the left-hand side. This is however not a trivial result. In fact, a simple (but crucial
for us) remark is the following: if $\gamma|\tau|\leq 1$, then for any $g^\tau\colon[0,1]\to\R$ such that $g^\tau\in C^{\gamma|\tau|}([0,1])$, the classical homogeneous Hölder space on $[0,1]$ with H\"older exponent $\gamma|\tau|$,
the function
\begin{equation}\label{non-un}
[0,1]^2\ni(s,t)\mapsto F_{st}\coloneq\langle X_{st},\tau\rangle+g^\tau_t-g^\tau_s
\end{equation}
also satisfies
\begin{equation}\label{non-un2}
\delta F_{sut}= \langle X_{su}\otimes X_{ut},\Delta'\tau\rangle, \qquad |F_{st}|\lesssim|t-s|^{\gamma|\tau|}, \qquad s,u,t\in[0,1].
\end{equation}
Inversely, if $F\colon[0,1]^2\to\R$ satisfies \eqref{non-un2}, then $F$ must satisfy \eqref{non-un} with
$g^\tau\in C^{\gamma|\tau|}([0,1])$.

If $\gamma|\tau|>1$, then Gubinelli's \textsl{Sewing Lemma} \cite{Gubinelli2003} yields that the function $(s,t)\mapsto\langle X_{st},\tau\rangle$ is uniquely determined by \eqref{non-un2} i.e. by the values of $X$ on trees with at most $|\tau|-1$ nodes, and therefore, applying a recursion, on trees with at most $N\coloneq\lfloor\gamma^{-1}\rfloor$ nodes.
More explicitly, the Sewing Lemma is an \textsl{existence and uniqueness result} for $[0,1]^2\ni(s,t)\mapsto \langle X_{st},\tau\rangle$ with $\gamma|\tau|>1$, once the right-hand side of \eqref{redcop} is known. However, for $\gamma|\tau|\leq 1$ we have
no uniqueness, as we have already seen, and existence is not trivial.

As we have seen in \eqref{non-un}, the value of $\langle X,\tau\rangle$ can be modified by adding
the increment of a function in $C^{\gamma|\tau|}([0,1])$, as long as $\gamma|\tau|\leq 1$. It seems
reasonable to think that it is therefore possible to construct an action on the set of branched $\gamma$-rough paths of the abelian group (under pointwise addition)
\[
  \cal C^\gamma\coloneq\{(g^\tau)_{\tau\in \T_N}: \, g^\tau_0=0, \, g^\tau\in C^{\gamma|\tau|}([0,1]), \, \forall\, \tau\in\T, |\tau|\leq N\},
\]
namely the set of all collections of functions $(g^\tau\in C^{\gamma|\tau|}([0,1]):\tau\in\T, |\tau|\leq N)$ indexed by rooted trees with fewer than $N\coloneq\lfloor\gamma^{-1}\rfloor$ nodes, such that $g^\tau_0=0$ and $g^\tau\in C^{\gamma|\tau|}([0,1])$. This is indeed the content of the following

\begin{thm}  \label{thm:intro.trans}
Let $\gamma\in\,]0,1[$ such that $\gamma^{-1}\not\in\N$.
  There is a transitive free action of $\C^\gamma$ on $\BR^\gamma$, namely a map $(g,X)\mapsto gX$ such that
  \begin{enumerate}
\item  for each $g,g'\in\cal C^\gamma$ and $X\in\BR^\gamma$ the identity $g'(gX) = (g+g')X$ holds.
\item if $(g^\tau)_{\tau\in \T_N}\in\cal C^\gamma$ is such that there exists a unique $\tau\in\T_N$
with $g^\tau\not\equiv 0$, then
\[
  \langle (gX)_{st},\tau\rangle=\langle X_{st},\tau\rangle+g^\tau_t-g^\tau_s
\]
and $\langle gX,\sigma\rangle=\langle X,\sigma\rangle$ for all $\sigma\in\T$ not containing $\tau$.
\item For every pair $X,X'\in\BR^\gamma$ there exists a \emph{unique} $g\in\C^\gamma$ such that $gX=X'$.
\end{enumerate}
\end{thm}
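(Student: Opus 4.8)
The plan is to build the action $(g,X)\mapsto gX$ explicitly by a recursion on the number of nodes of a tree, to read off (2) and (3) directly from this recursion, and to obtain (1) --- the only delicate point --- through the Hairer--Kelly description of branched rough paths. For the construction, fix $X\in\BR^\gamma$ and $g=(g^\tau)_{\tau\in\T_N}\in\C^\gamma$. Since the product of $\HH$ is the free commutative algebra on trees, a character is freely determined by its values on trees, and since $\Delta$ is an algebra morphism Chen's rule~\eqref{redcop} for a character is equivalent to~\eqref{redcop} on trees alone; the analytic bound is likewise multiplicative. Hence it is enough to define $(s,t)\mapsto\langle(gX)_{st},\tau\rangle$ for $\tau\in\T$, extending multiplicatively to forests as we proceed. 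For a single node we set $\langle(gX)_{st},\dtR<i>\rangle:=\langle X_{st},\dtR<i>\rangle+g^{\dtR<i>}_t-g^{\dtR<i>}_s$, which by~\eqref{non-un}--\eqref{non-un2} has the right regularity and satisfies~\eqref{redcop} trivially. Assume $\langle gX,\cdot\rangle$ has been constructed on all trees with fewer than $n$ nodes, compatibly with~\eqref{redcop} and the analytic bound. For $|\tau|=n$, the quantity $R^\tau_{sut}:=\langle(gX)_{su}\otimes(gX)_{ut},\Delta'\tau\rangle$ is well defined and, by coassociativity of $\Delta$ and the inductive hypothesis, satisfies the consistency condition under which the equation $\delta\langle gX,\tau\rangle=R^\tau$ is solvable. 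Since $\gamma^{-1}\notin\N$, exactly one of the following occurs. If $|\tau|>N$, i.e. $\gamma|\tau|>1$, then Gubinelli's Sewing Lemma~\cite{Gubinelli2003} yields the \emph{unique} $\langle(gX)_{st},\tau\rangle$ with $\delta\langle gX,\tau\rangle=R^\tau$ and $|\langle(gX)_{st},\tau\rangle|\lesssim|t-s|^{\gamma|\tau|}$, which we take. If $|\tau|\le N$, i.e. $\gamma|\tau|<1$, then a constructive version of the Lyons--Victoir extension theorem furnishes a particular solution $P^\tau$, depending only on --- and vanishing with --- the defect $\Xi^\tau_{sut}:=R^\tau_{sut}-\langle X_{su}\otimes X_{ut},\Delta'\tau\rangle$, with $\delta P^\tau=\Xi^\tau$ and $|P^\tau_{st}|\lesssim|t-s|^{\gamma|\tau|}$, and we set $\langle(gX)_{st},\tau\rangle:=\langle X_{st},\tau\rangle+P^\tau_{st}+g^\tau_t-g^\tau_s$; again~\eqref{non-un}--\eqref{non-un2} give~\eqref{redcop} and the bound. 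This defines $gX\in\BR^\gamma$, and when $g=0$ every defect, hence every $P^\tau$, vanishes, so $0X=X$.

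\emph{Property (1).} One shows by induction on $|\tau|$ that $g'(gX)$ and $(g+g')X$ agree on every tree. At a single node this is immediate from the defining formula, and for $|\tau|>N$ it follows from the uniqueness in the Sewing Lemma. For $|\tau|=n\le N$, after cancelling the common contributions $\langle X,\tau\rangle$ and $(g^\tau+{g'}^\tau)_t-(g^\tau+{g'}^\tau)_s$, one is left with an additivity relation for the map that sends the lower-order difference data to $P^\tau$. This is exactly where the Baker--Campbell--Hausdorff formula and the Hairer--Kelly map~\cite{Hairer2014} come in: transporting the construction through the Hairer--Kelly identification of $\BR^\gamma$ with a space of anisotropic geometric $\gamma$-rough paths over a graded alphabet, the extension at each homogeneity is realised as a BCH translation inside a pro-nilpotent Lie group, so that the relation in question becomes the associativity of the group law; pulling back then gives (1). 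Establishing this compatibility --- equivalently, upgrading the constructive Lyons--Victoir extension from a mere section to a genuine action of the abelian group $\C^\gamma$ --- is the main obstacle of the whole argument.

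\emph{Property (2).} Suppose $g^{\tau_0}$ is the only non-zero component of $g$. If a tree $\sigma$ does not contain $\tau_0$, then no tree occurring in $\Delta'\sigma$ contains $\tau_0$ either, these being subtrees of $\sigma$; hence, by induction on $|\sigma|$, the functions $\langle gX,\cdot\rangle$ and $\langle X,\cdot\rangle$ agree on all of them, the defect $\Xi^\sigma$ vanishes, $P^\sigma=0$, and since $g^\sigma\equiv0$ we obtain $\langle gX,\sigma\rangle=\langle X,\sigma\rangle$ (using Sewing uniqueness when $|\sigma|>N$). Applying this to every tree with fewer than $|\tau_0|$ nodes, and then to $\tau_0$ itself --- whose reduced coproduct involves only such trees --- gives $\Xi^{\tau_0}=0$, hence $P^{\tau_0}=0$ and $\langle(gX)_{st},\tau_0\rangle=\langle X_{st},\tau_0\rangle+g^{\tau_0}_t-g^{\tau_0}_s$, together with $\langle gX,\sigma\rangle=\langle X,\sigma\rangle$ for every $\sigma$ not containing $\tau_0$.

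\emph{Property (3).} For transitivity, we build $g$ so that $gX$ and $X'$ agree on all trees with at most $n$ nodes, by induction on $n$. For a single node, $g^{\dtR<i>}_t:=\langle X'_{0t},\dtR<i>\rangle-\langle X_{0t},\dtR<i>\rangle$ defines an element of $C^\gamma([0,1])$ vanishing at $0$, and it does the job. If $gX$ and $X'$ already agree below level $n$, then the quantities $R^\tau$ and the defects $\Xi^\tau$ coincide for $X'$ and for $gX$, so for $|\tau|=n>N$ the Sewing Lemma forces $\langle gX,\tau\rangle=\langle X',\tau\rangle$, while for $|\tau|=n\le N$ the function $F_{st}:=\langle X'_{st},\tau\rangle-\langle X_{st},\tau\rangle-P^\tau_{st}$ satisfies $\delta F=0$ and $|F_{st}|\lesssim|t-s|^{\gamma|\tau|}$, so by~\eqref{non-un}--\eqref{non-un2} there is a unique $g^\tau\in C^{\gamma|\tau|}([0,1])$ with $g^\tau_0=0$ and $F_{st}=g^\tau_t-g^\tau_s$, and this choice makes $\langle gX,\tau\rangle=\langle X',\tau\rangle$; hence $gX=X'$. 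For freeness, if $gX=g'X$ then an induction on the number of nodes gives $g=g'$: at every level the two actions are built from identical data, and equality of $\langle gX,\tau\rangle$ and $\langle g'X,\tau\rangle$ forces $g^\tau$ and ${g'}^\tau$ to have the same increments, hence (both vanishing at $0$) to be equal. Thus $g=g'$, which together with transitivity yields the unique $g\in\C^\gamma$ with $gX=X'$.
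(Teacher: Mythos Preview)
Your construction and the paper's diverge at the definition of the action, and this is where the gap lies. You define $gX$ directly on branched rough paths via a ``particular solution'' $P^\tau$ depending only on the defect $\Xi^\tau$, whereas the paper defines the action entirely through the anisotropic side: one first builds $\bar X$ via \Cref{thm:b2ag}, then sets $g\bar X$ to be the anisotropic rough path constructed by \Cref{thm:main2} over the translated letters $(x^\tau+g^\tau)_{\tau\in\T_N}$, and finally puts $\langle gX,\tau\rangle\coloneq\langle g\bar X,\psi(\tau)\rangle$. With this definition, property~(1) is immediate from \Cref{thm:main2}(2): the anisotropic construction is a fixed deterministic function of the level-one data, so reconstructing over $x+g+g'$ gives the same path whether one reaches it in one step or two. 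No ``additivity of $\Xi\mapsto P$'' and no ``BCH translation'' is involved.

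Your sketch of~(1) conflates these two definitions. You build $gX$ via $P^\tau$, observe correctly that~(1) reduces to an additivity relation for $P^\tau$, and then claim this follows by ``transporting the construction through the Hairer--Kelly identification'' so that ``the extension at each homogeneity is realised as a BCH translation'' and the relation becomes ``associativity of the group law''. But the Lyons--Victoir extension is not a group translation; it is a lift, and the recursion \eqref{eq:Zdef} that produces it is nonlinear in the lower-level data through the BCH remainder. There is no reason your $P^\tau$-action should coincide with the paper's $\psi$-action, and you do not prove that it does. Either you must show directly that your map $\Xi^\tau\mapsto P^\tau$ is additive (it is not, in general, for a bare Lyons--Victoir section), or you must abandon the direct $P^\tau$ construction and define the action via $\psi$ from the outset --- in which case your proofs of~(2) and~(3), which rely on the $P^\tau$ formulation, need to be rewritten in terms of $\bar X$ and $\psi$, as in \Cref{pr:1} and \Cref{pr:2}. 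The inductive arguments you give for~(2) and~(3) are otherwise sound in spirit and close to the paper's.
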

We say that a tree $\sigma\in\T$ {\it contains} a tree $\tau\in\T$ if there exists a subtree $\tau'$ of $\sigma$, not necessarily containing the root of $\sigma$, such that $\tau$ and $\tau'$ are
isomorphic as rooted trees, where the root of $\tau'$ is its node which is closest to the root of
$\sigma$.
Note that every $(g^\tau)_{\tau\in \T_N}\in\cal C^\gamma$ is the sum of finitely many elements of $\cal C^\gamma$ having satisfying the
property required in point (2) of \Cref{thm:intro.trans}.

If $\gamma>1/2$ then the result of \Cref{thm:intro.trans} is trivial. Indeed, in this case $N=1$, $\T_N=\{\dtR<i>: \, i=1,\ldots,d\}$, and
$\cal C^\gamma=\{g^{\dtR<i>}\in C^{\gamma}([0,1]): \, g^{\dtR<i>}_0=0, \, i=1,\ldots,d\}$. Then
the action is
\begin{equation}\label{eq:g1}
(g,X)\mapsto gX, \qquad \langle (gX)_{st},\dtR<i>\rangle \coloneq \langle X_{st},\dtR<i>\rangle
+ g^{\dtR<i>}_t-g^{\dtR<i>}_s,
\end{equation}
while the value of $\langle gX,\tau\rangle$ for $|\tau|\geq 2$ is uniquely determined by \eqref{redcop} via the Sewing Lemma. For example
\begin{equation}\label{eq:inte2}
\langle (gX)_{st},\dtI<ij>\rangle\coloneq\int_s^t (x^j_u-x^j_s+g^{\dtR<j>}_u-g^{\dtR<j>}_s) \, {\rm d}(x^i_u+g_u^{\dtR<i>}),
\end{equation}
where $x^i_u\coloneq\langle X_{0u},\dtR<i>\rangle$ and the integral is well-defined in the Young sense, see \cite[section 3]{Gubinelli2003}.

If $1/3<\gamma\leq 1/2$ then $N=2$ and $\T_2=\T_1\sqcup\{\dtI<ij>: \, i,j=1,\ldots,d\}$. Then
the action at level $|\tau|=1$ is still given by \eqref{eq:g1}, while at level $|\tau|=2$ we must have by \eqref{redcop}
\begin{equation}\label{eq:delta1} \delta \langle gX,\dtI<ij>\rangle_{sut} = \langle (gX)_{su}\otimes (gX)_{ut},\Delta'\tau\rangle = (x^j_u-x^j_s+g^{\dtR<j>}_u-g^{\dtR<j>}_s)(x^i_t-x^i_u+g^{\dtR<i>}_t-g^{\dtR<i>}_u).
\end{equation}
Although the right-hand side of \eqref{eq:delta1} is explicit and simple, in this case there is no
canonical choice for $\langle gX,\dtI<ij>\rangle$. An expression like \eqref{eq:inte2} is ill-defined in the Young sense, and the same is true if
we try the formulation
\begin{equation}\label{eq:inte}
\langle (gX)_{st},\dtI<ij>\rangle=\langle X_{st},\dtI<ij>\rangle+\int_s^t \left((x^j_u-x^j_s+g^{\dtR<j>}_u-g^{\dtR<j>}_s) \, {\rm d}g_u^{\dtR<i>}+ (g_u^{\dtR<j>}-g_s^{\dtR<j>}) \, {\rm d}x_u^i\right),
\end{equation}
which satisfies formally \eqref{eq:delta1}, but the Young integrals are ill defined since
$2\gamma\leq 1$. The construction of $\langle gX,\dtI<ij>\rangle$ is therefore not trivial in this case.

The same argument applies for any $\gamma\leq 1/2$ and any tree $\tau$ such that $2\leq |\tau|\leq N=\lfloor\gamma^{-1}\rfloor$, and the fact that the above Young integrals are not well defined shows why existence of the map $X\to gX$ is not trivial.

Since \Cref{thm:intro.trans} yields an action of $\C^\gamma$ on $\BR^\gamma$ which is \emph{regular}, i.e. free and transitive,
then $\BR^\gamma$ is a \emph{principal $\C^\gamma$-homogeneous space} or \emph{$\C^\gamma$-torsor}. In particular, $\BR^\gamma$ is a copy of $\C^\gamma$, but there is no canonical choice of an origin in $\BR^\gamma$.

Therefore, \Cref{thm:intro.trans} also yields the following
\begin{crl}\label{crl:param}
Given a branched $\gamma$-rough path $X$, the map $g\to gX$ yields a bijection between $\C^\gamma$ and the set of branched $\gamma$-rough paths.
\end{crl}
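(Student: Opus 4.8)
The statement is a direct reformulation of \Cref{thm:intro.trans}, so the plan is simply to unwind the definitions. Fix $X\in\BR^\gamma$ and consider the map $\Phi_X\colon\C^\gamma\to\BR^\gamma$, $\Phi_X(g):=gX$. This is well defined precisely because the action constructed in \Cref{thm:intro.trans} takes values in $\BR^\gamma$, so there is nothing to prove here beyond recalling that construction. Surjectivity of $\Phi_X$ is then the \emph{existence} half of point (3): for every $X'\in\BR^\gamma$ there is $g\in\C^\gamma$ with $gX=X'$, i.e.\ $\Phi_X(g)=X'$. Injectivity is the \emph{uniqueness} half: if $\Phi_X(g)=\Phi_X(g')$, i.e.\ $gX=g'X$, then $g$ and $g'$ both send $X$ to the same element of $\BR^\gamma$, whence $g=g'$. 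Hence $\Phi_X$ is a bijection, which is exactly the assertion of \Cref{crl:param}.

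Consequently the only real work lies in \Cref{thm:intro.trans} itself — constructing the map $(g,X)\mapsto gX$ and checking that it is a free and transitive action — and I expect that, not the corollary, to be where all the difficulty is concentrated; in particular the existence of $gX$ on trees $\tau$ with $2\le|\tau|\le N$, where, as explained above, the naive Young-integral formulas fail. As a consistency check one may note that the bijection $\Phi_X$ depends on the basepoint $X$ only up to an affine translation of $\C^\gamma$: if $X'=hX$ is another basepoint, then by property (1) of \Cref{thm:intro.trans} one has $\Phi_{X'}(g)=gX'=g(hX)=(g+h)X=\Phi_X(g+h)$, so that $\Phi_X^{-1}\circ\Phi_{X'}$ is the shift $g\mapsto g+h$. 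This is precisely the absence of a canonical origin in the $\C^\gamma$-torsor $\BR^\gamma$ noted above.
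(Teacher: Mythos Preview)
Your proposal is correct and matches the paper's approach exactly: the paper gives no separate proof of \Cref{crl:param}, presenting it as an immediate consequence of \Cref{thm:intro.trans} (``Therefore, \Cref{thm:intro.trans} also yields the following''), and your unwinding of point (3) into surjectivity and injectivity of $\Phi_X$ is precisely the intended reading. The additional remark about the change of basepoint being an affine shift is a nice elaboration of the torsor structure the paper mentions just after the corollary.
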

Therefore \Cref{crl:param} yields a complete parametrization of the space of branched rough paths. This result
is somewhat surprising, since rough paths form a non-linear space, in particular because of the Chen relation; however \Cref{crl:param} yields a natural bijection between the space of branched $\gamma$-rough paths and the
linear space $\C^\gamma$.

\Cref{crl:param} also gives a complete answer to the question of existence and characterization of branched $\gamma$-rough paths over a $\gamma$-H\"older path $x$. Unsurprisingly, for our construction we start from a result of T. Lyons and N. B. Victoir's \cite{Lyons2007} of 2007, which
was the first general theorem of existence of a geometric $\gamma$-rough path over a $\gamma$-H\"older path $x$, see our discussion of \Cref{thm:LV} below.

An important point to stress is that the action constructed in \Cref{thm:intro.trans} is neither
unique nor canonical. In the proof of \Cref{thm:charext} below, some parameters have to be fixed arbitrarily,
and the final outcome depends on them, see \Cref{rmk:change}. In this respect, the situation is similar to what happens in regularity structures with the reconstruction operator on spaces ${\mathcal D}^\gamma$ with a negative
exponent $\gamma<0$, see \cite[Theorem 3.10]{Hairer2014d}.

\subsection{Outline of our approach}
A key point in \Cref{thm:intro.trans} is the construction of branched $\gamma$-rough paths.
In the case of {\it geometric} rough paths, see Definition \ref{def:geo}, the \emph{signature} \cite{Chen1977, Lyons1998} of a smooth path $x\colon[0,1]\to\R^d$ yields a canonical construction. Other cases where geometric rough paths over non-smooth paths have been constructed are Brownian motion and fractional Brownian motion (see \cite{Coutin2002} for the case $H>\tfrac14$ and \cite{Nualart2011} for the general case) among others.
However, until T. Lyons and N. B. Victoir's paper \cite{Lyons2007} in 2007, this question remained largely open in the general case.
The precise result is as follows
\begin{thm}[Lyons--Victoir extension]
  If $p\in[1,\infty)\setminus\N$ and $\gamma\colon=1/p$, a $\gamma$-H\"older path $x\colon[0,1]\to\R^d$ can be lifted to a geometric $\gamma$-rough path. For any $p\geq 1$ and $\varepsilon\in\,]0,\gamma[$, a $\gamma$-H\"older path can be lifted to a geometric $(\gamma-\varepsilon)$-rough path.
  \label{thm:LV}
\end{thm}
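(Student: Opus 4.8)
The plan is to build the lift degree by degree in the truncated tensor algebra, invoking the Sewing Lemma only where it applies and replacing it by a hands-on dyadic construction where it does not. Write $N:=\lfloor p\rfloor=\lfloor\gamma^{-1}\rfloor$; since $p\notin\N$ one has $k\gamma<1$ for every $k\le N$ while $(N+1)\gamma>1$. Recall that a geometric $\gamma$-rough path truncated at level $N$ is a map $X:[0,1]^2\to G^N(\R^d)\subset T^{(N)}(\R^d)=\bigoplus_{k=0}^N(\R^d)^{\otimes k}$ into the free nilpotent Lie group of step $N$, with $X_{tt}=\1$, Chen's relation $X_{su}\otimes X_{ut}=X_{st}$, and $\|X_{st}\|\lesssim|t-s|^\gamma$ in a homogeneous norm. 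Once such an $X$ is produced, all components of degree $>N$ are uniquely and canonically generated from it by the Sewing Lemma, exactly as in the discussion around \eqref{redcop}, since then $(N+1)\gamma>1$; so it is enough to construct the components $X^1,\dots,X^N$ with $X^k_{st}\in(\R^d)^{\otimes k}$.

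I would proceed by induction on $k$. For $k=1$ set $X^1_{st}:=x_t-x_s$, which is $\gamma$-Hölder by hypothesis. Assume $X^{(k)}=(X^1,\dots,X^k)$ has been built, takes values in the step-$k$ group, and satisfies Chen together with $|X^j_{st}|\lesssim|t-s|^{j\gamma}$ for $j\le k$. Projecting Chen's relation to degree $k+1$ forces
\[
\delta X^{k+1}_{sut}=\sum_{j=1}^{k}X^j_{su}\otimes X^{k+1-j}_{ut}=:\Xi_{sut},
\]
where the right-hand side is already known, satisfies $|\Xi_{sut}|\lesssim\sum_{j}|u-s|^{j\gamma}|t-u|^{(k+1-j)\gamma}\lesssim|t-s|^{(k+1)\gamma}$ for $s\le u\le t$, and obeys the algebraic coherence condition required by the Sewing Lemma (equivalently, $\delta\Xi=0$), which holds because Chen's relation holds at degrees $\le k$. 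If $(k+1)\gamma>1$ the Sewing Lemma would produce a unique $X^{k+1}$ with $\delta X^{k+1}=\Xi$ and $|X^{k+1}_{st}|\lesssim|t-s|^{(k+1)\gamma}$; but under our hypothesis $(k+1)\gamma\le N\gamma<1$, so this never occurs, there is no canonical choice, and the naive telescoping diverges — refining $[s,t]$ into $2^\ell$ equal pieces adds a correction of size $\sim 2^{\ell(1-(k+1)\gamma)}|t-s|^{(k+1)\gamma}$, which blows up as $\ell\to\infty$.

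This is the heart of the matter, and the following device replaces sewing. Construct $X^{k+1}$ first on dyadic rationals by a coarse-to-fine recursion. Put $X^{k+1}_{0,1}:=0$, the first of several arbitrary choices. Given $X^{k+1}$ on all dyadic intervals of generation $n$ (length $2^{-n}$), for such an interval $[s,t]$ with midpoint $m$ Chen forces $X^{k+1}_{s,m}+X^{k+1}_{m,t}=X^{k+1}_{s,t}-\Xi_{smt}$, one equation for two unknowns, and I take the balanced solution $X^{k+1}_{s,m}=X^{k+1}_{m,t}:=\tfrac12(X^{k+1}_{s,t}-\Xi_{smt})$, carried out inside the group so that group-likeness is preserved (the fibre of $G^{k+1}\to G^k$ over any point is a coset of the abelian group of degree-$(k+1)$ free Lie elements, on which the midpoint makes sense). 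With $C_n:=\sup\{|X^{k+1}_{st}|:t-s=2^{-n}\}$ one gets $C_{n+1}\le\tfrac12 C_n+D\,2^{-n(k+1)\gamma}$, and because $(k+1)\gamma<1$ the forcing term dominates $\tfrac12 C_n$, so the recursion self-improves to $C_n\lesssim 2^{-n(k+1)\gamma}$; this is precisely where $p\notin\N$ enters, since $(k+1)\gamma=1$ would only give $C_n\lesssim n\,2^{-n}$. Decomposing an arbitrary dyadic interval into at most two dyadic intervals of each generation $\ge n$ and gluing by Chen upgrades this to $|X^{k+1}_{st}|\lesssim|t-s|^{(k+1)\gamma}$ for all dyadic $s<t$ (the accompanying $\Xi$-corrections being summable by a geometric series), and uniform continuity extends $X^{k+1}$ to all of $[0,1]^2$ preserving Chen, the Hölder bound and group-likeness. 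This closes the induction and gives the first assertion.

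For the second assertion, note that a $\gamma$-Hölder path on $[0,1]$ is a fortiori $(\gamma-\varepsilon')$-Hölder for any $\varepsilon'\in\,]0,\varepsilon]$, and one may choose such an $\varepsilon'$ with $(\gamma-\varepsilon')^{-1}\notin\N$; the first part then produces a geometric $(\gamma-\varepsilon')$-rough path, whose increments, being bounded by powers of $|t-s|\le1$, make it in particular a geometric $(\gamma-\varepsilon)$-rough path. The one genuine obstacle throughout is the sub-critical range $(k+1)\gamma<1$, where uniqueness is lost and the Sewing Lemma is unavailable: the self-improving dyadic recursion is what restores existence, and keeping every step compatible with the group (geometric) structure is the bookkeeping that must not be dropped.
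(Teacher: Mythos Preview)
Your overall strategy is exactly the Lyons--Victoir dyadic construction, which is also what the paper carries out (in greater generality) in \Cref{thm:charext}: build the extension level by level, start the top component at $0$ on $[0,1]$, and at each dyadic bisection split the new component ``in half'' subject to Chen's constraint; the recursion $C_{n+1}\le\tfrac12 C_n+D\,2^{-n(k+1)\gamma}$ and the use of $(k+1)\gamma<1$ are the right analytic ingredients, and your treatment of the second assertion is fine.

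There is, however, a genuine gap in the algebraic step. Your explicit recursion
\[
X^{k+1}_{s,m}=X^{k+1}_{m,t}:=\tfrac12\bigl(X^{k+1}_{s,t}-\Xi_{smt}\bigr)
\]
does \emph{not} preserve group-likeness. Already at level $k+1=2$: a group-like element has symmetric part $\tfrac12(X^1)^{\otimes 2}$, and a direct computation gives that the symmetric part of your $X^2_{sm}$ equals $\tfrac14\bigl((X^1_{sm})^{\otimes 2}+(X^1_{mt})^{\otimes 2}\bigr)$, which is $\tfrac12(X^1_{sm})^{\otimes 2}$ only when $X^1_{sm}=\pm X^1_{mt}$. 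Your parenthetical about the fibre of $G^{k+1}\to G^k$ being a coset of the degree-$(k+1)$ free Lie elements is precisely the point, but it contradicts the formula you wrote: the halving must be performed on the Lie coordinate, not on the tensor component $X^{k+1}$. Concretely, write $L_{st}=\log_k X^{(k)}_{st}$ and lift by $X^{(k+1)}_{st}=\exp_{k+1}(L_{st}+Z_{st})$ with $Z_{st}\in W_{k+1}$; Chen's rule together with centrality of $W_{k+1}$ gives $Z_{st}=Z_{sm}+Z_{mt}+\BCH_{(k+1)}(L_{sm},L_{mt})$, and the correct balanced split is
\[
Z_{sm}=Z_{mt}:=\tfrac12\bigl(Z_{st}-\BCH_{(k+1)}(L_{sm},L_{mt})\bigr),
\]
which is exactly \eqref{eq:Zdef} in the paper. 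The required estimate then is $|\BCH_{(k+1)}(L_{sm},L_{mt})|\lesssim 2^{-m(k+1)\gamma}$, which the paper obtains from Reutenauer's explicit BCH formula \eqref{eq:BCHkfull}; your bound on $\Xi$ does not directly substitute for this, since $\Xi$ and $\BCH_{(k+1)}$ differ by symmetric terms built from the lower levels. Once you move the recursion to the $Z$-variable, the rest of your argument (the $C_n$ recursion, the dyadic-to-continuous extension via \Cref{lmm:LV}) goes through and coincides with the paper's proof.
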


Our first result is a version of this theorem which holds for rough paths in a more general algebraic context, see Theorem \ref{thm:charext} below. We use the Lyons-Victoir approach and an explicit form of the Baker--Campbell--Hausdorff formula by Reutenauer \cite{Reutenauer1986}, see formula \eqref{eq:BCHk} below. Whereas Lyons and Victoir used in one passage the axiom of choice, our method is completely constructive.

Using the same idea we extend this construction to the case where the collection $(x^1,\dotsc,x^d)$ is allowed to have different regularities in each component, which we call \emph{anisotropic (geometric) rough paths (aGRP)}, see Definition \ref{def:ageo}.
\begin{thm}
To each collection $(x^i)_{i=1,\dotsc,d}$, with $x^i\in C^{\gamma_i}([0,1])$, we can associate an anisotropic rough path $\bar X$ over $(x^i)_{i=1,\dotsc,d}$. For every collection $(g^i)_{i=1,\dotsc,d}$, with $g^i\in C^{\gamma_i}([0,1])$, denoting by
$g\bar X$ the anisotropic geometric rough path over $(x^i+g^i)_{i=1,\dotsc,d}$, we have
\[
g'(g\bar X)=(g+g')\bar X.
\]
  \label{thm:intro.aGRP}
\end{thm}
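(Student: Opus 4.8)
The plan is to obtain both halves of the statement — the existence of a lift $(x^i)_i\mapsto\bar X$ and the compatibility $g'(g\bar X)=(g+g')\bar X$ — from the constructive Lyons--Victoir extension of \Cref{thm:charext}, applied to the shuffle Hopf algebra $\Sh$ spanned by the words in the alphabet $\{1,\dots,d\}$ (with shuffle product $\shuffle$ and deconcatenation coproduct), equipped with the \emph{anisotropic} grading that assigns to a word $w$ the weight $\|w\|:=\sum_{i=1}^d\gamma_i\,n_i(w)$, where $n_i(w)$ is the number of occurrences of the letter $i$ in $w$. Since each $\gamma_i>0$, only finitely many words have weight below any given bound, so $\Sh$ is a connected graded Hopf algebra with finite-dimensional homogeneous components, and by \Cref{def:ageo} an anisotropic geometric rough path over $(x^i)_i$ is exactly a path $\bar X$ valued in the character group of $\Sh$ that satisfies Chen's rule $\bar X_{su}*\bar X_{ut}=\bar X_{st}$, the bounds $|\langle\bar X_{st},w\rangle|\lesssim|t-s|^{\|w\|}$, and $\langle\bar X_{0t},i\rangle=x^i_t$. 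The existence of $\bar X$ over an arbitrary collection $(x^i)_i$ with $x^i\in C^{\gamma_i}([0,1])$ is then a direct application of \Cref{thm:charext}: the value of $\bar X$ is forced on the letters, $\langle\bar X_{st},i\rangle=x^i_t-x^i_s$, and is produced inductively in the weight, the Sewing Lemma giving uniqueness on words of weight $>1$ while the explicit Baker--Campbell--Hausdorff formula \eqref{eq:BCHk} furnishes an admissible choice on the finitely many words of weight $\le 1$ and length $\ge 2$.

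I would then define the shift action by running exactly the same recipe, but started from a given aGRP instead of from the trivial one: given an aGRP $\bar Y$ over $(y^i)_i$ and $g=(g^i)_i$ with $g^i\in C^{\gamma_i}([0,1])$, set $\langle(g\bar Y)_{st},i\rangle:=\langle\bar Y_{st},i\rangle+g^i_t-g^i_s$ on letters, apply the Sewing Lemma on words of weight $>1$, and apply the prescription \eqref{eq:BCHk} — fed with the lower-weight data of $g\bar Y$ already produced — on the remaining words of weight $\le 1$. The quantitative Sewing estimates together with $g^i\in C^{\gamma_i}([0,1])$ yield $|\langle(g\bar Y)_{st},w\rangle|\lesssim|t-s|^{\|w\|}$ at each step, so $g\bar Y$ is again an aGRP, now over $(y^i+g^i)_i$; in particular $g\bar X$ is the object appearing in the statement, and the assertion $g'(g\bar X)=(g+g')\bar X$ is the $\bar Y=\bar X$ case of the cocycle identity $g'(g\bar Y)=(g+g')\bar Y$, which I would prove for every aGRP $\bar Y$ by induction on the weight.

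On letters both sides of the cocycle identity equal $\langle\bar Y_{st},i\rangle+g^i_t-g^i_s+(g')^i_t-(g')^i_s$. On a word $w$ with $\|w\|>1$, both $g'(g\bar Y)$ and $(g+g')\bar Y$ are aGRPs over $(y^i+g^i+(g')^i)_i$ whose restrictions to words of weight $<\|w\|$ agree by the inductive hypothesis; since the right-hand side of Chen's rule in the form \eqref{redcop} depends only on those lower-weight values, the Sewing Lemma forces $\langle g'(g\bar Y),w\rangle=\langle(g+g')\bar Y,w\rangle$. The remaining case — words $w$ of weight $\le 1$ and length $\ge 2$ — is where the real work lies, and I expect it to be the only genuinely non-routine point: one must check that extending by the increment $g'$ the rough path already obtained from $\bar Y$ by the extension with increment $g$ gives the same value on $w$ as extending $\bar Y$ once by the increment $g+g'$. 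Since the recipe was designed so that, on these weight levels, the shift acts on the logarithm of the rough path as Baker--Campbell--Hausdorff multiplication by the exponential of the corresponding increment, iterating two shifts amounts to an iterated such multiplication, and the claim reduces to the associativity $\BCH(\BCH(a,b),c)=\BCH(a,\BCH(b,c))$ of the Baker--Campbell--Hausdorff product in the free Lie algebra truncated at weight $\le 1$. Verifying this on all words of weight $\le 1$ closes the induction and proves the theorem, the analytic estimates and the case $\|w\|>1$ being immediate consequences of the Sewing Lemma and of \Cref{thm:charext}.
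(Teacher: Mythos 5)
Your overall plan is the right one and matches the paper's: view the anisotropic shuffle algebra $\HH_{\mathrm a}(A)$ as a graded connected Hopf algebra with finite-dimensional components, run the Lyons--Victoir/BCH extension of \Cref{thm:charext} to produce $\bar X$, and define the shift $g\bar X$ by rerunning that construction over the shifted first-level paths. The existence half, the use of the Sewing Lemma for weight $>1$, and the observation that the only nontrivial case is words of weight $\le 1$ and length $\ge 2$ are all correct.

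The misstep is in your final paragraph. The shift does \emph{not} act on the logarithm of the rough path as ``$\BCH$-multiplication by the exponential of the increment,'' and the cocycle identity does \emph{not} reduce to associativity $\BCH(\BCH(a,b),c)=\BCH(a,\BCH(b,c))$. There is no Lie-algebra element canonically attached to $g$ alone for which this would make sense: $g\bar X$ is not $\bar X$ multiplied by anything in the group, it is an entirely new rough path rebuilt from scratch over the paths $(x^i+g^i)_i$. If the shift were a group multiplication, one could not hope for the analytic estimates to come out right at low weight, and even formally the operation you would need is a homomorphism property $\BCH(a(g'),a(g))=a(g+g')$, which is not associativity and is generally false.

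The correct argument is more elementary, and it is what the paper uses. The extension procedure is \emph{deterministic}: the recursion \eqref{eq:Zdef} (with the fixed choice $Z_{0,1}=0$ at every level) produces $Z$ on dyadics purely from the logarithms $L$ at the previous level, which in turn depend only on the first-level paths and the earlier $Z$'s. Hence the whole aGRP is a fixed function of its first-level data. Since $g'(g\bar X)$ and $(g+g')\bar X$ are both obtained by feeding the \emph{same} first-level data $(x^i+g^i+(g')^i)_i$ into this function, their logarithms $L^n$ and $\bar L^n$ agree at every level, therefore $\BCH_{(n+1)}(L^n_{su},L^n_{ut})=\BCH_{(n+1)}(\bar L^n_{su},\bar L^n_{ut})$ by \Cref{lmm:phik}, therefore the $Z$'s agree, and the two constructions coincide word by word. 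No property of $\BCH$ beyond the fact that it is a fixed formula is invoked. Replacing your BCH-associativity step by this determinism argument closes the gap and gives exactly the proof of \Cref{thm:main2}.
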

This kind of extension to rough paths has already been explored in the papers \cite{Boedihardjo2017, Gyurko2016} in the context of isomorphisms between geometric and branched rough paths.
It turns out that the additional property obtained by our method enables us to explicitly describe the propagation of suitable modifications from lower to higher degrees.

We then go on to describe the interpretation of the above results in the context of branched rough paths. The main tool is the Hairer--Kelly map \cite{Hairer2014}, that we introduce and describe
in \Cref{lmm:HK} and then use to encode branched rough paths via anisotropic geometric rough paths, along the same lines as in \cite[Theorem 4.3]{Boedihardjo2017}.
\begin{thm}\label{thm:1.6}
  Let $X$ be a branched $\gamma$-rough path. There exists an anisotropic geometric rough path $\bar{X}$ indexed by words on the alphabet $\T_N$, with exponents $(\gamma_\tau=\gamma|\tau|, \tau\in\T_N)$, and such that $\langle X,\tau\rangle = \langle\bar{X},\psi(\tau)\rangle$, where $\psi$ is the Hairer--Kelly map.
\end{thm}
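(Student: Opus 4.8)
The plan is to construct the anisotropic geometric rough path $\bar X$ directly from $X$ by using the Hairer--Kelly map $\psi:\HH\to\T(V)$, where $V$ is the vector space spanned by the alphabet $\T_N$ of trees with at most $N=\lfloor\gamma^{-1}\rfloor$ nodes. Recall (from the discussion preceding \Cref{prp:HKpart}) that $\psi$ is a Hopf algebra morphism from the Butcher--Connes--Kreimer Hopf algebra $\HH$ into the shuffle Hopf algebra over $\T_N$, characterised by the property that it intertwines the coproducts and that it is \emph{triangular} with respect to the grading: $\psi(\tau) = \tau + (\text{words of length} \geq 2 \text{ in letters of total degree} = |\tau|)$. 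I would first make precise the target space: an aGRP indexed by words on $\T_N$ with exponents $\gamma_\tau := \gamma|\tau|$ is, by \Cref{def:ageo}, a map $\bar X:[0,1]^2\to \G(V)$ (the characters of the shuffle algebra) satisfying Chen's multiplicativity relation for the shuffle coproduct and the anisotropic analytic bound $|\langle \bar X_{st}, \tau_1\cdots \tau_k\rangle| \lesssim |t-s|^{\gamma(|\tau_1|+\cdots+|\tau_k|)}$.

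The core construction is then to set $\langle \bar X_{st}, w\rangle := \langle \bar X_{st}^{\mathrm{HK}}, w\rangle$ for $w$ a word on $\T_N$, where $\bar X^{\mathrm{HK}}$ is defined by dualising $\psi$: concretely, one wants $\langle X_{st}, \tau\rangle = \langle \bar X_{st}, \psi(\tau)\rangle$ for all $\tau$, and since $\psi$ is triangular and grading-preserving, this relation can be solved recursively in $|\tau|$ — the leading term of $\psi(\tau)$ is the single letter $\tau$, so $\langle \bar X_{st}, \tau\rangle$ (as a length-one word) is forced to equal $\langle X_{st},\tau\rangle$ minus the contribution of the strictly-longer words appearing in $\psi(\tau)$, all of which involve letters of total degree $|\tau|$ but can be expressed through lower-level data once we know $\bar X$ on shorter words. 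This does not by itself define $\bar X$ on \emph{all} words, only on the image of $\psi$; for words not in the image one must still assign values, and here I would invoke the constructive Lyons--Victoir extension for anisotropic geometric rough paths, \Cref{thm:intro.aGRP} (equivalently \Cref{thm:charext} in the aGRP setting), to extend the partially-defined character to a genuine aGRP. The compatibility of the two prescriptions — that the recursively forced values on $\im\psi$ are consistent with an aGRP structure — follows because $\psi$ is a \emph{Hopf algebra morphism}: Chen's relation for $X$ (i.e. \eqref{redcop} for the BCK coproduct) transports under $\psi$ to Chen's relation for $\bar X$ on $\im\psi$, precisely because $\Delta_{\mathrm{Sh}}\circ\psi = (\psi\otimes\psi)\circ\Delta$.

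The analytic bound is the routine part: since $\psi(\tau)$ is a finite linear combination of words of total degree $|\tau|$, the bound $|\langle X_{st},\tau\rangle|\lesssim |t-s|^{\gamma|\tau|}$ together with the recursively-established aGRP bounds on shorter words gives $|\langle \bar X_{st},\tau\rangle|\lesssim |t-s|^{\gamma|\tau|}$ at each level, and the extension theorem supplies the bounds on the remaining words by construction. I expect the main obstacle to be the bookkeeping in the recursive inversion of $\psi$ on its image: one must check that the system of equations $\langle X,\tau\rangle = \langle\bar X,\psi(\tau)\rangle$, read level by level in $|\tau|$, is genuinely solvable and that the solution is compatible with the shuffle relations already imposed on $\bar X$ at lower levels (so that no over-determination occurs when two different trees $\tau,\tau'$ force overlapping word-values). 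This is where the explicit partition-sum formula for $\psi$ from \Cref{prp:HKpart} is essential — it makes the triangular structure transparent and lets one identify exactly which lower-degree words enter — and where the fact, emphasised in \cite[Theorem 4.3]{Boedihardjo2017}, that $\psi$ is injective guarantees that the branched data is faithfully encoded. Once solvability on $\im\psi$ is settled, feeding the result into \Cref{thm:intro.aGRP} closes the argument.
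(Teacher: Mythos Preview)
Your approach is essentially the same as the paper's: build $\bar X$ by recursion on the number of nodes, using the triangularity $\psi(\tau)=\tau+\psi_{|\tau|-1}(\tau)$ to pin down the single-letter values and the anisotropic extension theorem to supply the remaining words. Two points deserve correction, though.

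First, a genuine gap. You define the letter value $\langle\bar X_{st},\tau\rangle$ as $\langle X_{st},\tau\rangle-\langle\bar X_{st},\psi_{|\tau|-1}(\tau)\rangle$, but for the anisotropic extension theorem (\Cref{thm:main2}) to apply you need this two-parameter function to be the \emph{increment of a one-variable path} $x^\tau\in C^{\gamma|\tau|}$. This is not automatic and is exactly the computation the paper isolates in \eqref{eq:deltaF}: one checks that $\delta$ of the difference vanishes, using $(\psi\otimes\psi)\Delta'=\bar\Delta'\psi$ together with the fact that the single-letter word $\tau$ is primitive in $T^c(\T)$, so that $\bar\Delta'\psi(\tau)=\bar\Delta'\psi_{|\tau|-1}(\tau)$. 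Your remark that ``Chen's relation transports under $\psi$'' points in this direction but does not identify it as the step that turns a two-parameter object into a path. Relatedly, the recursion must \emph{interleave} the two operations: one first builds the full aGRP $\bar X^{(n)}$ over the alphabet $\T_n$ (so that $\langle\bar X^{(n)},\psi_n(\tau)\rangle$ is even defined for $|\tau|=n+1$), then reads off the new letter values $x^\tau$, then invokes \Cref{crl:ext} to extend to $\bar X^{(n+1)}$ over $\T_{n+1}$ without disturbing what was already built. Treating the extension as a single terminal step, as your outline suggests, does not work, because the longer-word values over $\T_n$ are needed as input to the next layer of letter values.

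Second, a misdirected concern. There is no over-determination issue to worry about: each tree $\tau$ contributes exactly one new letter to the alphabet, and its letter value is freely assignable; the shuffle relations among longer words are handled entirely by the extension theorem and never constrain distinct letters against one another. Injectivity of $\psi$ is likewise irrelevant for this direction of the argument --- it would matter if you were trying to recover $X$ from $\bar X$, not the other way round.
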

The main difference of this result with \cite[Theorem 1.9]{Hairer2014} is that we obtain an anisotropic geometric rough path instead of a classical geometric rough path.
This means that we do not construct unneeded components, i.e. components with regularity larger than 1, and we also obtain the right Hölder estimates in terms of the size of the indexing tree.
This addresses two problems mentioned in Hairer and Kelly's work, namely Remarks 4.14 and 5.9 in \cite{Hairer2014}.

We then use \Cref{thm:intro.aGRP} and \Cref{thm:1.6} to construct our action on branched rough
paths. Given $(g,X)\in\cal C^\gamma\times\BR^\gamma$, we construct the anisotropic geometric rough
paths $\bar X$ and $g\bar X$ and then define the branched rough path $gX\in\BR^\gamma$ as $\langle gX,\tau\rangle = \langle g\bar{X},\psi(\tau)\rangle$, where $\psi$ is the Hairer--Kelly map.

Our approach also does not make use of Foissy-Chapoton's Hopf-algebra isomorphism \cite{Chapoton2010,Foissy2002} between the Butcher--Connes--Kreimer Hopf algebra and the shuffle algebra over a complicated set $I$ of trees as is done in \cite{Boedihardjo2017}.
This allows us to construct an action of a \textsl{larger} group on the set of branched rough paths;
indeed, using the above isomorphism one would obtain a transformation group parametrized by $(g^\tau)_{\tau\in I}$ where $I$ is the aforementioned set of trees of Foissy-Chapoton's results and $g^\tau\in C^{\gamma|\tau|}$;
on the other hand our approach yields a transformation group parametrized by $(g^\tau)_{\tau\in\T_N}$. With the smaller set $I\cap\T_N$, transitivity of the action $g\mapsto gX$ would be lost.

Finally we note that we use a special property of the Butcher-Connes-Kreimer Hopf algebra: the fact that it is freely
generated as an algebra by the set of trees, so defining characters over it is significantly easier than in the geometric case.
  To define an element $X\in G$ it suffices to give the values $\langle X,\tau\rangle$ for all trees $\tau\in\T$; by freeness there is a unique multiplicative extension to all of $\HH$.
  This is not at all the case for geometric rough paths: the shuffle algebra $T(A)$ over an alphabet $A$ is {not} free over the linear span of words so if one is willing to define a character $X$ over $T(A)$ there are additional algebraic constraints that the values of $X$ on words must satisfy.

\paragraph{\textbf{Outline.}} We start by reviewing all the theoretical concepts needed to make the exposition in this section formal.
In \Cref{sse:RPthm} we state and prove the main result of this chapter. We extend the notion of rough path and we give an explicit construction of such a generalized rough path above any given path $x\in C^\gamma$.
Next, in \Cref{sse:aRP} we extend this result to the class of anisotropic geometric rough paths.
Finally, in \Cref{sse:bRP} we connect our construction with M. Gubinelli's branched rough paths, and we extend M. Hairer and D. Kelly's work in \Cref{sss:RPHK}.
We also explore possible connections with renormalisation in \Cref{sss:mod} by studying how our construction behaves under modification of the underlying paths.
Then, we connect this approach with a recent work by Bruned, Chevyrev, Friz and Preiß \cite{Bruned2017} in \Cref{sss:BCFP}, who borrowed ideas from the theory of Regularity Structures \cite{BHZ,Hairer2014d} and proposed a renormalisation procedure for geometric and branched rough paths \cite{Bruned2017}
based on pre-Lie morphisms.

The main difference between our result and the BCFP procedure is that they consider translation only by time-independent factors, whereas --under reasonable hypotheses-- we are also able to handle general translations depending on the time parameter.
We also mention that some further algebraic aspects of renormalisation in rough paths have been recently developed in \cite{Bruned2018}.

\paragraph{\textbf{Acknowledgements.}} The authors thank Jean-David Jacques for pointing out a mistake in a previous version. 
N.T. acknowledges support by the CONICYT/Doc\-to\-ra\-do Na\-cio\-nal doctoral scholarship grant number 2013-21130733, \emph{Núcleo Milenio Modelos Estocásticos de Sistemas Complejos y Desordenados} and the Berlin Mathematical School MATH+ EF1-5 project ``On robustness of deep neural networks''. L.Z. gratefully acknowledges support by the project of the Agence Nationale de la Recherche ANR-15-CE40-0020-01 grant LSD.

\section{Preliminaries}\label{sec:2}
A Hopf algebra $\HH$ is a vector space endowed with an \emph{associative product} $m\colon\HH\otimes\HH\to\HH$:
\[
m(m\otimes\id)=m(\id\otimes m), 
\]
and a \emph{coassociative coproduct} $\Delta\colon\HH\to\HH\otimes\HH$:
\[
({\id}\otimes\Delta)\Delta=(\Delta\otimes{\id})\Delta,
\] 
satisfying moreover certain compatibility assumptions; $\HH$ is also supposed to have a \emph{unit} $\1\in\HH$, a \emph{counit} $\varepsilon\in\HH^*$ and an \emph{antipode} $S\colon\HH\to\HH$ such that
\[ m({\id}\otimes S)\Delta x=\varepsilon(x)\1 = m(S\otimes{\id})\Delta x \]
for all $x\in\HH$.
As usual we will use the more compact notation $m(x\otimes y)=xy$.
The reader is referred to the papers \cite{Cartier2007,Manchon2008} for further details.

\begin{dfn}
  We say that the Hopf algebra $\HH$ is \emph{graded} if it can be decomposed as a direct sum
\begin{equation}\label{eq:grade} \HH=\bigoplus_{n=0}^\infty\HH_{(n)} \end{equation}
with
\begin{equation}\label{eq:coprod}
m\colon\HH_{(n)}\otimes\HH_{(m)}\to\HH_{(n+m)}, \qquad \Delta\colon\HH_{(n)} \to \bigoplus_{p+q=n}\HH_{(p)}\otimes\HH_{(q)}.
\end{equation}
\end{dfn}
In a graded Hopf algebra, each element $x\in\HH$ can be decomposed as a sum
\begin{equation}\label{eq:hhx} x = \sum_{n=0}^\infty x_n, \qquad x_n\in\HH_{(n)}, \end{equation}
where only a finite number of the summands are non-zero.
We call each $x_n$ the \emph{homogeneous part of degree $n$} of $x$, and elements of $\HH_{(n)}$ are said to be homogeneous of degree $n$.
In this case we write $|x_n|=n$.

\begin{dfn}
  The graded Hopf algebra $\HH$ is \emph{connected} if the degree 0 part is one-dimensional. It is locally finite if $\dim\HH_{(n)}<\infty$ for all $n\ge 0$.
  \label{dfn:locfin}
\end{dfn}

From now on we consider a graded connected locally finite Hopf algebra $\HH$.
Then, for any homogeneous element $x\in\HH_{(n)}$ the coproduct can be written as
\[ \Delta x = x\otimes\1+\1\otimes x+\Delta'x, \qquad {\rm where} \quad
  \Delta'x \in \bigoplus_{\substack{p+q=n\\p,q\ge 1}}\HH_{(p)}\otimes\HH_{(q)}
  \]
and $\Delta'\colon\HH\to\HH\otimes\HH$ is known as the \emph{reduced coproduct}.
Furthermore, the coassociativity of $\Delta$ and of $\Delta'$, i.e. the identity $(\Delta'\otimes{\id})\Delta'=({\id}\otimes\Delta')\Delta'$, allows to unambiguously define their iterates $\Delta_n,\Delta_n'\colon\HH\to\HH^{\otimes (n+1)}$ by setting for $n\geq 2$
\[ \Delta_{n} = ({\id}\otimes\Delta_{n-1})\Delta, \qquad \Delta'_{n} = ({\id}\otimes\Delta'_{n-1})\Delta'. \]
Then we have, for a homogeneous element $x\in\HH_{(k)}$ of degree $k$,
\[ \Delta'_nx \in \smashoperator[r]{\bigoplus_{\substack{p_1+\dotsc+p_{n+1}=k\\p_j\ge 1}}}\HH_{(p_1)}\otimes\dotsm\otimes\HH_{(p_{n+1})}. \]
\begin{rmk}
  These properties of the iterated coproduct imply that the bialgebra $(\HH,\Delta)$ is \emph{conilpotent}, that is, for each homogeneous $x\in\HH_{(k)}$ there is an integer $n\leq k$ such that
  $\Delta_n'x=0$.
  We obtain also the inclusion
  \[ \Delta_{n}'\HH_{(n+1)}\subset\HH_{(1)}^{\otimes(n+1)}, \]
  that is, the $n$-fold reduced coproduct of a homogeneous element of degree $n+1$ is a sum of $(n+1)$-fold tensor products of homogeneous elements of degree 1.
  \label{rmk:nfoldcoprod}
\end{rmk}

We recall that in general the dual space $\HH^*$ carries an algebra structure given by the convolution product $\star$, dual to the coproduct $\Delta$, defined by
\[ \langle f\star g,x\rangle\coloneq\langle f\otimes g,\Delta x\rangle. \]
For a collection of maps $f_1,\dotsc,f_k\in\HH^*$ we have the formula
\begin{equation}
  f_1\star\dotsm\star f_k = 
  (f_1\otimes\dotsm\otimes f_k)\circ\Delta_{k-1}.
  \label{eq:kfoldconv}
\end{equation}
\begin{dfn}
  A \emph{character} on $\HH$ is a non-zero linear map $X\colon\HH\to\R$ 
\[ \langle X,xy\rangle = \langle X,x\rangle\langle X,y\rangle, \qquad \forall \, x,y\in\HH. \]
for all $x,y\in\HH$. We call $G$ the set of all characters on $\HH$.
An \emph{infinitesimal character} (or derivation) on $\HH$ is a linear map $\alpha\colon\HH\to\R$ such that
\[ \langle \alpha,xy\rangle = \langle \alpha,x\rangle\langle\varepsilon,y\rangle+\langle\varepsilon,x\rangle\langle \alpha,y\rangle, \qquad \forall \, x,y\in\HH. \]
We call $\g$ the set of all infinitesimal characters on $\HH$.
\end{dfn}
We observe that necessarily $\langle X,\1\rangle=1$ and $\langle\alpha,\1\rangle=0$ for all $X\in G$ and $\alpha\in\g$.
It is well known that the $(G,\star,\varepsilon)$ is a group with product $\star$, unit $\varepsilon$ and inverse $X^{-1}=X\circ S$ where $S$ is the antipode defined above.
Moreover $(\g,[\cdot,\cdot])$ is a Lie algebra with bracket $[\alpha,\beta]\coloneq\alpha\star \beta-\beta\star\alpha$.
See e.g. \cite{Manchon2008}.

\subsection{Nilpotent Lie algebras}
From \eqref{eq:coprod} we have
\begin{lmm}
  For any $N\in\N$ the subspace \[ \HH_N\coloneq \bigoplus_{k=0}^N\HH_{(k)}\subset \HH \] is a counital subcoalgebra of $(\HH, \Delta, \varepsilon)$.
  \label{lmm:subc}
\end{lmm}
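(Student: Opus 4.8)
The plan is to read the whole statement off the grading relation \eqref{eq:coprod}; there is no genuine obstacle, only some bookkeeping with degrees. I would split the argument into (i) showing that $\HH_N$ is stable under $\Delta$, so that the restrictions of $\Delta$ and $\varepsilon$ make it a counital subcoalgebra, and (ii) checking that the canonical projection $\pi_N$ is surjective and compatible with the structure maps.

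For (i): take a homogeneous $x\in\HH_{(k)}$ with $0\le k\le N$. By \eqref{eq:coprod}, $\Delta x\in\bigoplus_{p+q=k}\HH_{(p)}\otimes\HH_{(q)}$, and every pair $(p,q)$ occurring here satisfies $p\le p+q=k\le N$ and likewise $q\le N$, so $\HH_{(p)}\otimes\HH_{(q)}\subseteq\HH_N\otimes\HH_N$; summing over $k\le N$ gives $\Delta(\HH_N)\subseteq\HH_N\otimes\HH_N$. Hence $\Delta$ restricts to a map $\Delta_N\colon\HH_N\to\HH_N\otimes\HH_N$, and $\varepsilon$ restricts to $\varepsilon_N\coloneq\varepsilon|_{\HH_N}$. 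Thanks to this stability, the coassociativity identity $(\Delta_N\otimes\id)\Delta_N=(\id\otimes\Delta_N)\Delta_N$ and the counit axioms $(\varepsilon_N\otimes\id)\Delta_N=\id=(\id\otimes\varepsilon_N)\Delta_N$ on $\HH_N$ are literally the restrictions to $\HH_N$ of the corresponding identities on $\HH$, hence hold automatically. Thus $(\HH_N,\Delta_N,\varepsilon_N)$ is a counital coalgebra, and by construction a subcoalgebra of $(\HH,\Delta,\varepsilon)$; equivalently, the inclusion $\HH_N\hookrightarrow\HH$ is an injective morphism of coalgebras.

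For (ii): the projection $\pi_N$, which sends $x=\sum_k x_k$ to $\sum_{k\le N}x_k$, restricts to the identity on $\HH_N$ and is therefore surjective; it is also compatible with the counits, $\varepsilon_N\circ\pi_N=\varepsilon$, since by connectedness $\varepsilon$ already annihilates every $\HH_{(k)}$ with $k\ge1$. The one identity deserving attention is the intertwining of coproducts, $\Delta_N\circ\pi_N=(\pi_N\otimes\pi_N)\circ\Delta$, which I would verify on a homogeneous $x\in\HH_{(k)}$ by expanding $\Delta x$ via \eqref{eq:coprod} and keeping track of which tensor components $\HH_{(p)}\otimes\HH_{(q)}$ survive $\pi_N\otimes\pi_N$: for $k\le N$ this is immediate, since then $\pi_N x=x$ and $\Delta x$ already lies in $\HH_N\otimes\HH_N$, whereas for $k>N$ one must look carefully at the degrees $p,q$ of the two tensor legs. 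This degree bookkeeping is the only step with any content and is where I expect the (very short) proof to concentrate; no input beyond the grading property \eqref{eq:coprod} enters.
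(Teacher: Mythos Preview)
Your argument for part (i) is correct and is essentially what the paper intends: the paper gives no proof beyond the phrase ``From \eqref{eq:coprod} we have'' preceding the lemma, so the subcoalgebra claim is simply read off the grading, exactly as you do.

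The gap is in part (ii), precisely at the step you flag as ``the only step with any content'' but do not carry out. The identity $\Delta_N\circ\pi_N=(\pi_N\otimes\pi_N)\circ\Delta$ \emph{fails} for homogeneous $x\in\HH_{(k)}$ with $k>N$: the left-hand side vanishes since $\pi_N x=0$, but $\Delta x$ may have components in $\HH_{(p)}\otimes\HH_{(q)}$ with $p+q=k>N$ and yet \emph{both} $p,q\le N$, and these survive $\pi_N\otimes\pi_N$. A concrete instance: take $\HH$ to be the shuffle Hopf algebra over a single letter $a$ and $N=1$; then $\pi_1(aa)=0$ while
\[
(\pi_1\otimes\pi_1)\Delta(aa)=(\pi_1\otimes\pi_1)(aa\otimes\1+a\otimes a+\1\otimes aa)=a\otimes a\neq 0.
\]
Equivalently, $\bigoplus_{k>N}\HH_{(k)}$ is not a coideal of $\HH$. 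So the second assertion of the lemma is false as stated, and the ``degree bookkeeping'' you postponed cannot be completed. The paper does not actually rely on this claim downstream --- the later arguments use only that $\HH_N$ is a subcoalgebra and verify the needed properties of $\iota_N$ by direct computation (see \Cref{prp:gNg}) --- but you should not expect to be able to prove it.
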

By Lemma \ref{lmm:subc} we can consider the dual algebra $(\HH_N^*,\star,\varepsilon)$.
This algebra is also graded and connected, since we have the natural grading
\begin{equation}
  \HH_N^*=\bigoplus_{k=0}^N\HH_{(k)}^*, \qquad \HH_N^*\ni\alpha=\sum_{k=0}^N \alpha_{(k)}, 
  \label{eq:hngrad}
\end{equation}
where $\alpha_{(k)}\colon\HH_N\to\R$ is defined by $\alpha_{(k)}(x)\coloneq\alpha(x_k)$ with the notation \eqref{eq:hhx}.

Since $\HH_N$ is not a subalgebra of $\HH$, the notions of character and infinitesimal character on $\HH_N^*$ are not well-defined. We can however introduce their \emph{truncated} versions.
\begin{dfn}\label{def_trunc}
  We say that $X\in\HH_N^*\setminus\{0\}$ is a \emph{truncated character} on $\HH_N$ if
  \[ \langle X,xy\rangle=\langle X,x\rangle\langle X,y\rangle \]
  holds for all $x\in\HH_{(n)},y\in\HH_{(m)}$ with $n+m\le N$. We call $G^N$ the space of truncated characters on $\HH_N$.
  
  Likewise, we say that $\alpha\in\HH_{N}^*$ is a \emph{truncated infinitesimal character} if
  \[ \langle\alpha,xy\rangle=\langle\alpha,x\rangle\langle\varepsilon,y\rangle+\langle\varepsilon,x\rangle\langle\alpha,y\rangle \]
  holds for all $x\in\HH_{(n)},y\in\HH_{(m)}$ with $n+m\le N$. We call $\g^N$ the space of truncated
  infinitesimal characters on $\HH_N$.
\end{dfn}

\begin{lmm}\label{lmm:incl}
There are a canonical inclusions $\HH_N^*\hookrightarrow\HH_{N+1}^*\hookrightarrow\HH^*$, which induce canonical inclusion $\g^N\hookrightarrow\g^{N+1}\hookrightarrow\g$. Moreover such canonical inclusions are right-inverse for the
corresponding restriction maps $\HH^*\to\HH_{N+1}^*\to\HH_{N}^*$.
\end{lmm}
\begin{proof} Using the notation \eqref{eq:hngrad}, we can extend $\alpha\in\HH_N^*$ to $\alpha\in\HH_{N+1}^*$ (respectively $\HH^*$) by setting $\alpha_{(N+1)}\equiv 0$ (respectively $\alpha_{(k)}\equiv 0$ for all $k\geq N+1$).
  Trivially this extension takes $\HH_N^*$ to $\HH_{N+1}^*$. If $\alpha\in\g^N$ and $x,y\in\HH_N$ are such that $|x|+|y|\le N+1$ then
  \begin{align*}
    \langle\alpha,xy\rangle&=\Big\langle\alpha,\sum_{j=0}^{N+1}(xy)_{j}\Big\rangle=\sum_{j=0}^N\langle\alpha,(xy)_j\rangle
    = \sum_{j=0}^N\sum_{k=0}^j\langle\alpha,x_ky_{k-j}\rangle
    \\ & =\sum_{j=0}^N(\langle\alpha,x_j\rangle\langle\varepsilon,y\rangle+\langle\varepsilon,x\rangle\langle\alpha,y_j\rangle)
    = \langle\alpha,x\rangle\langle\varepsilon,y\rangle+\langle\varepsilon,x\rangle\langle\alpha,y\rangle.
  \end{align*}
  so that the extension of $\alpha$ is in $\g^{N+1}$.
  The same argument yields the inclusion $\g^N\hookrightarrow\g$.
\end{proof} 

There are also the truncated exponential $\exp_N\colon\HH_N^*\to\HH_N^*$ and logarithm $\log_N\colon\HH^*_N\to\HH^*_N$, defined by the sums
\begin{equation}
  \exp_N(\alpha)\coloneq\sum_{k=0}^N\frac{1}{k!}\left.\alpha^{\star k}\right|_{\HH_N},\qquad\log_N(X)\coloneq\sum_{k=1}^N\frac{(-1)^{k+1}}{k}\left.(X-\varepsilon)^{\star k}\right|_{\HH_N}.
  \label{eq:truncexp}
\end{equation}
The proof of the next result can be found for instance in \cite[Thm 77]{Foi}.
\begin{lmm}
  $(G^N,\star,\varepsilon)$ is a group and $(\g^N,[\cdot,\cdot])$ is a Lie algebra.
  Moreover, $\exp_N\colon\g^N\to G^N$ is a bijection with inverse $\log_N\colon G^N\to\g^N$.
\end{lmm}
For every $k\geq 0$ we define now, using the notation \eqref{eq:hngrad},
\[
  W_k\coloneq\left\{\alpha\in\g: \ \alpha=\alpha_{(k)}\right\}.
\]
\begin{lmm}\label{lmm:gradedLie}
For all $n,m\geq 0$ we have $[W_n,W_m]\subset W_{n+m}$.
\end{lmm}
\begin{proof}
Let $x\in\HH$. With the notation \eqref{eq:hhx} we have for $\alpha\in W_n$ and $\beta\in W_m$
\[
(\alpha\star\beta-\beta\star\alpha)(x)=(\alpha\otimes\beta-\beta\otimes\alpha)\Delta x=
(\alpha\otimes\beta-\beta\otimes\alpha)\Delta x_{n+m}
\]
by \eqref{eq:coprod}.
\end{proof}

By the canonical inclusion of Lemma \ref{lmm:incl}, we observe that 
\begin{equation}\label{WW} \g^N=\bigoplus_{k=1}^NW_k, \qquad \g^N\ni\alpha= \sum_{k=0}^N \alpha_{(k)}
\end{equation}
in the notation \eqref{eq:hngrad}.
With this decomposition $\g^N$ becomes by Lemma \ref{lmm:gradedLie} a graded Lie algebra.
We recall that the \emph{center} of $\g^N$ is the subspace of all $w\in\g^N$ such that $[\alpha,w]=0$ for all $\alpha\in\g^N$,
while the \emph{center} of $G^N$ is the set of all $X\in G^N$ such that $X\star Y=Y\star X$ for all $Y\in G^N$.

\begin{prp}\label{pr:centre}
  $W_N$ is contained in the center of $\g^N$ and $\exp_N(W_N)$ is a subgroup contained in the center of $G^N$. 
  \end{prp}
\begin{proof}
  Let $\alpha\in\g^N$ and $w\in W_N$. Clearly, $\langle[\alpha,w],x\rangle$ is zero unless $|x|=N$.
  In this case
  \[ \langle[\alpha,w],x\rangle=\langle\alpha\otimes w-w\otimes\alpha,\Delta x\rangle=\langle\alpha,1\rangle\langle w,x\rangle-\langle w,x\rangle\langle\alpha,1\rangle=0\]
  since $\langle w,y\rangle=\langle w,y_N\rangle$, in the notation \eqref{eq:hhx}. The second assertion follows easily: it is enough to write $X=\exp_N(w)$ and $Y=\exp_N(\alpha)$ with $\alpha\in\g^N$ and $w\in W_N$ and use the explicit representation \eqref{eq:truncexp} of $\exp_N$ and the fact that $\alpha\star w=w\star\alpha$.
\end{proof}

The next (famous) result describes the group law on $G^N$ in terms of an operation on $\g^N$ via the exponential/logarithmic map.
 
\begin{thm}[Baker--Campbell--Hausdorff]
  For all $\alpha,\beta\in\g^N$, we have \[ \log_N(\exp_N(\alpha)\star\exp_N(\beta))\in\g^N. \]
  \label{thlBCH}
\end{thm}
We define the map $\BCH_N\colon\g^N\times\g^N\to\g^N$ by
\begin{equation}
  \BCH_N(\alpha,\beta)\coloneq \log_N(\exp_N(\alpha)\star\exp_N(\beta)).
  \label{bch}
\end{equation}
Another way to interpret this theorem is to say that there exists an element $\gamma=\BCH_N(\alpha,\beta)\in\g_N$ such that $\exp_N(\alpha)\star\exp_N(\beta)=\exp_N(\gamma)$.

It is a classical result that the map $\BCH_N$ is formed by a sum of iterated Lie brackets of $\alpha$ and $\beta$, where the first terms are
\begin{equation}
  \BCH_N(\alpha,\beta)=\alpha+\beta+\frac{1}{2}[\alpha,\beta]+\frac{1}{12}[\alpha,[\alpha,\beta]]-\frac{1}{12}[\beta,[\alpha,\beta]]+\dotsb,
  \label{eq:DBCH}
\end{equation}
and the following ones are explicit but difficult to compute.
Nevertheless, fully explicit formulas have been known since 1947 by Dynkin \cite{Dynkin2000}.

For our purposes, however, Dynkin's formula is too complicated (for example, the regularity argument in step 2 of the proof of \Cref{thm:charext} would not be as evident) so we rely on a different expression first shown by Reutenauer \cite{Reutenauer1986}.
In order to describe it, let $\varphi_k\colon(\HH^*)^{\otimes k}\to \HH^*$ be the linear map
\begin{equation}
  \varphi_k(\alpha_1\otimes\dotsm\otimes \alpha_k)=\sum_{\sigma\in S_k}a_{\sigma}\, \alpha_{\sigma(1)}\star\dotsm\star \alpha_{\sigma(k)}
  \label{eq:phikdef}
\end{equation}
where $S_k$ denotes the symmetric group of order $k$, and $a_\sigma\coloneq\tfrac{(-1)^{d(\sigma)}}{k}\binom{k-1}{d(\sigma)}^{-1}$ is a constant depending only on the \emph{descent number} $d(\sigma)$ of the permutation $\sigma\in S_k$, namely the number of $i\in\{1,\ldots,k-1\}$ such that $\sigma(i)>\sigma(i+1)$.
\begin{lmm}[Reutenauer's formula]
  \label{lmm:reu}
For all $\alpha,\beta\in\g^N$
  \begin{equation}
    \BCH_N(\alpha,\beta)=\sum_{k=1}^N\sum_{i+j=k}\frac{1}{i!j!}\, \varphi_k(\alpha^{\otimes i}\otimes\beta^{\otimes j}).
    \label{eq:BCHk}
  \end{equation}
  Moreover, for all $i\in\{0,\ldots,N\}$, we have $\varphi_{N}\left(\alpha^{\otimes i}\otimes\beta^{\otimes (N-i)}\right)\in W_N$. 
\end{lmm}
\begin{proof}
  Let us suppose first that $T(V)$ is the (completed) tensor algebra over a two-dimensional vector space $V$, 
  with $V$ linearly generated by $\{e_1,e_2\}$. Then the result is contained in Reutenauer's paper \cite{Reutenauer1986} where the free step-N nilpotent Lie algebra $\mathfrak L^N$ plays the rôle of $\g^N$.
We want now to show how this implies the same result in our more general setting.

Let $\alpha,\beta\in\g^N$ and let $\Phi\colon(T(V)_N,\otimes)\to(\HH_N^*,\star)$ be the unique algebra morphism such that 
$\Phi(e_1)=\alpha$, $\Phi(e_2)=\beta$. Then $\Phi$ restricts to a Lie-algebra morphism $\Phi\colon\f L^N\to
\g^N$ such that $\BCH_N(\alpha,\beta)=\Phi(\BCH_N(e_1,e_2))$ and therefore \eqref{eq:BCHk} follows.

In order to prove the first formula, we first note that $\Phi$ is not a graded morphism, since the generators $e_1$ and $e_2$ are homogeneous of degree $1$ in $T(V)_N$, but $\alpha$ and $\beta$ are in general not homogeneous in $\HH_N^*$.
However, from the bilinearity of the Lie bracket and \Cref{lmm:gradedLie} we obtain 
\[
  [W_n\oplus\dotsb\oplus W_N,W_m\oplus\dotsb\oplus W_N]\subset W_{n+m}\oplus W_{n+m+1}\oplus\dotsb\oplus W_N.
  \]
Then, if $\alpha_1,\dotsc,\alpha_k\in\g^N$ then $\varphi_k(\alpha_1\otimes\dotsm\otimes\alpha_k)\in W_k\oplus\dotsb\oplus W_N$.
\end{proof}

From all these considerations we obtain the following result on the map 
\begin{equation}\label{eq:bch}
\BCH_{(n+1)}\colon \g^{n+1}\times\g^{n+1}\to W_{n+1}, \qquad \BCH_{(n+1)}\coloneq\BCH_{n+1}-\BCH_{n},
\qquad n\geq 0.
\end{equation}
Note that $\BCH_{(n+1)}$ takes indeed values in $W_{n+1}$ rather than in $\g^{n+1}$ by (both assertions of)
Lemma \ref{lmm:reu}.
\begin{lmm}
  Let $x\in\HH_{(n+1)}$ and $\alpha,\beta\in\g^{n+1}$. Then 
  \begin{equation} \label{eq:phik}
  \langle\BCH_{(n+1)}(\alpha,\beta),x\rangle=\sum_{i+j=n+1}\frac{1}{i!j!}\, \sum_{(x)}\sum_{\sigma\in S_{n+1}}a_\sigma\prod_{p=1}^i\langle\alpha,x_{(\sigma^{-1}(p))}\rangle\prod_{q=i+1}^{n+1}\langle\beta,x_{(\sigma^{-1}(q))}\rangle,
\end{equation}
where
  \[ \Delta'_{n}x = \sum_{(x)} x_{(1)}\otimes\dotsm\otimes x_{(n+1)}\in\HH_{(1)}^{\otimes (n+1)}. \]
  \label{lmm:phik}
\end{lmm}
\begin{proof}
Set $\alpha_1=\cdots=\alpha_i\coloneq\alpha$, $\alpha_{i+1}=\cdots=\alpha_{n+1}\coloneq\beta$.
Then the result follows directly from the definition of $\varphi_k$ in \eqref{eq:phikdef} together with \eqref{eq:kfoldconv} and the fact that since $\langle\alpha_j, \1\rangle=0$ we can write
  \begin{equation}\label{eq:starsta}
  \alpha_1\star\dotsm\star\alpha_{n+1} =
  (\alpha_1\otimes\dotsm\otimes\alpha_{n+1})\Delta'_{n} 
  \end{equation}
  instead (note the reduced coproduct in place of the full coproduct).
\end{proof}

\subsection{A distance on the group of truncated characters}
\label{sss:homnorm}
Now we introduce a distance on $G^N$ which is well adapted to the notion of rough paths, to be introduced in \Cref{dfn:genRP} below.
We fix a basis $B$ of $\HH_N$ and define a norm $\|\cdot\|$ on this space by requiring that $B$ is orthonormal. 
There is a unique function $c\colon B\times B\times B\to \R$ such that
\[
\Delta v=\sum_{v_1,v_2\in B} c(v,v_1,v_2)\, v_1\otimes v_2, \qquad \forall \, v\in B.
\]
Then we define
\[ K\coloneq \max_{v\in B}\sum_{v_1,v_2\in B}|c(v,v_1,v_2)|<\infty, \qquad
\lrn{f}\coloneq K \, \sup_{v\in B}|\langle f,v\rangle|, \quad f\in\HH_N^*. \]
Then, if $f,g\in\HH_{N}^*$, for any $v\in B$
\[ |\langle f\star g,v\rangle|\le \sum_{v_1,v_2\in B}|c(v,v_1,v_2)||\langle f,v_1\rangle||\langle g,v_2\rangle|\le\frac{1}{K}\, \lrn{f}\lrn{g}, \]
thus $\lrn{f\star g}\le\lrn{f}\lrn{g}$.
We set now for all $X\in G^N$
\begin{equation}
  |X| \coloneq \max_{k=1,\dotsc,N}\left( k!\lrn{X_{(k)}} \right)^{1/k}+\max_{k=1,\dotsc,N}\left( k!\lrn{\left(X^{-1}\right)_{(k)}} \right)^{1/k},
  \label{eq:homnorm}
\end{equation}
where for $X\in G^N\subset\HH^*_N$ we use the notation \eqref{eq:hngrad}. We define $G^N\times G^N\ni(X,Y)\mapsto \rho^N(X,Y)\coloneq|X^{-1}\star Y|\in\R_+$, i.e. by \eqref{eq:homnorm} 
\begin{equation}\label{eq:rho}
  \rho_N(X,Y) = \max_{k=1,\dotsc,N}\left( k!\lrn{(Y^{-1}\star X)_{(k)}} \right)^{1/k}+\max_{k=1,\dotsc,N}\left( k!\lrn{(X^{-1}\star Y)_{(k)}} \right)^{1/k}
\end{equation}

\begin{prp}
The map $\rho_N$ defines a left-invariant distance on the group $G^N$ such that the metric space $(G^N,\rho_N)$ is complete.
  \label{prp:subadd}
\end{prp}
\begin{proof}
  We only need to prove that the function $|\cdot|$ defined in \eqref{eq:homnorm} is sub-additive, the other properties being clear.
Note that for $X,Y\in G^N$, with the notation \eqref{eq:hngrad} we have
   \begin{equation} X\star Y = \left(\sum_{k=0}^NX_{(k)}\right)\star
  \left(\sum_{k=0}^NY_{(k)}\right)=\sum_{k=0}^N\sum_{j=0}^kX_{(j)}\star Y_{(k-j)}.
  \label{lmm:convcomp}
  \end{equation}
Therefore
  \begin{align*}
    \lrn{(X\star Y)_{(k)}} &\leq \sum_{j=0}^k\lrn{X_{(j)}}\lrn{Y_{(k-j)}}
    \le \frac{1}{k!}\sum_{j=0}^k\binom{k}{j}|X|^j|Y|^{k-j}
    = \frac{1}{k!}(|X|+|Y|)^{k}
  \end{align*}
  whence the result.
\end{proof}
The next result is the analog of \cite[Prop. 7]{Lyons2007}.
\begin{lmm}\label{lmm:LV7} If $X=\exp_N(w_1+\cdots+w_N)$ with $w_i\in W_i$, then
\[
c_N  \, \max_{k=1,\ldots,N} \lrn{w_k}^{1/k} \leq |X|\leq C_N \, \max_{k=1,\ldots,N} \lrn{w_k}^{1/k}.
\]
\end{lmm}
\begin{proof}
Using the notation \eqref{eq:hngrad}, we have
\[
X_{(k)}=\sum_{i=1}^k \frac1{i!} \sum_{j_1+\cdots+j_i=k} w_{j_1}\star\cdots\star w_{j_i}
\]
so that for all $k=1,\ldots,N$
\[\begin{split}
\left( k! \lrn{X_{(k)}}\right)^{1/k} 
& \leq \left(\sum_{i=1}^k\frac{k!}{i!}\sum_{j_1+\dotsb+j_i=k}\left(\lrn{w_{j_1}}^{1/j_1}\right)^{j_1}\dotsm\left(\lrn{w_{j_i}}^{1/j_i}\right)^{j_i}\right)^{1/k} 
\\ & \leq \left( \sum_{i=1}^k\frac{k!}{i!}\sum_{j_1+\dotsb+j_i=k}\left(\max_{\ell=1,\ldots,k}\lrn{w_\ell}^{1/\ell}\right)^{j_1+\dotsb+j_i} \right)^{1/k}.
\end{split}
\]
There are exactly $\binom{k-1}{i-1}\le\tfrac{(k-1)^{i-1}}{(i-1)!}$ solutions to $j_1+\dotsb+j_i=k$ so that
\[ (k!\lrn{X_{(k)}})^{1/k}\le(k!(\mathrm e^k-1))^{1/k}\max_{\ell=1,\ldots,k}\lrn{w_\ell}^{1/\ell}.\]
Since $X^{-1}=\exp_N(-w_1-\cdots-w_N)$, the bound for $X^{-1}$ follows in the same way and we have therefore
proved the desired upper bound for $|X|$.
For the lower bound, we use the truncated logarithm 
\[
w_k=\sum_{i=1}^k \frac{(-1)^{i-1}}{i} \sum_{j_1+\cdots+j_i=k} X_{(j_1)}\star\cdots\star X_{(j_i)}.
\]
Then we can estimate
\[\begin{split}
\lrn{w_k}^{1/k} 
& \leq \left(\sum_{i=1}^k\frac{1}{i}\sum_{j_1+\dotsb+j_i=k}\left(\lrn{X_{(j_1)}}^{1/j_1}\right)^{j_1}\dotsm\left(\lrn{X_{(j_i)}}^{1/j_i}\right)^{j_i}\right)^{1/k} 
\\ & \leq \left( \sum_{i=1}^k\frac{1}{i}\binom{k-1}{i-1}\left(\max_{\ell=1,\ldots,k}\lrn{X_\ell}^{1/\ell}\right)^{j_1+\dotsb+j_i} \right)^{1/k}
\leq \frac1{c_N} |X|
\end{split}\]
and the proof is complete.
\end{proof}

We now note that the function $|\cdot|$ and the distance $\rho^N$ make $G^N$ a \emph{homogeneous group}, see \cite{Folland1982} for an
extensive treatment of this subject, and \cite{Lyons2007} for the case of tensor algebras and the relation with geometric rough paths.  

To put it briefly, for all $r>0$ we can define the following linear operator $\Omega_r\colon\HH^*\to\HH^*$
\[
\Omega_r\alpha\coloneq \sum_{k\geq 0} r^k\, \alpha_{(k)}.
\]
This family satisfies $\Omega_r\circ\Omega_s=\Omega_{rs}$, $r,s>0$. Moreover 
$\Omega_r\colon\g^N\to\g^N$ is a Lie-algebra automorphism of $\g^N$ for all $r>0$.
Then they induce group automorphisms $\Lambda_r\coloneq{\exp}_N\circ\Omega_r\circ{\log}_N\colon G^N\to G^N$, $r>0$. 
In the terminology of \cite{Folland1982}, $(\Omega_r)_{r>0}$ is a family of \emph{dilations}
on the finite-dimensional Lie algebra $\g^N$ and $G^N$
is a \emph{homogeneous} group. 

Note that the function $|\cdot|\colon G^N\to\R_+$ is continuous, satisfies $|\Lambda_rX|=r|X|$ for all $r>0$ and $X\in G^N$,
and $|X|=0$ for $X\in G^N$ if and only if $X=\1$. These three properties make $|\cdot|$ a \emph{homogeneous norm} on $G^N$, see \cite{Folland1982}.
The homogeneity property plays an important role in the proof of \Cref{thm:charext} below.

\section{Construction of Rough paths}
\label{sse:RPthm}
As in the previous section, we fix a locally-finite graded connected Hopf algebra $\HH$.
We also fix a number $\gamma\in\,]0,1[$ and let $N\coloneq\lfloor\gamma^{-1}\rfloor$ be the biggest integer such that $N\gamma\le 1$.
  Without loss of generality we can fix a basis $B$ of $\HH_N$ consisting only of homogeneous elements and in particular we let $\{e_1,\dotsc,e_d\}=B\cap\HH_{(1)}$ where $d\coloneq\dim\HH_{(1)}$.

\begin{dfn}
  A \emph{$(\HH,\gamma)$-rough path} is a function $X\colon[0,1]^2\to G^N$, with $N=\lfloor\gamma^{-1}\rfloor$, which satisfies Chen's rule 
  \begin{equation}\label{eq:chen}
  X_{su}\star X_{ut}=X_{st}, \qquad s,u,t\in[0,1],
  \end{equation}
and such that for all $v\in B$
  \begin{equation}
    |\langle X_{st},v\rangle|\lesssim|t-s|^{\gamma|v|}.
    \label{eq:genrpbound}
  \end{equation}
  If $x^i\colon[0,1]\to\R$, $i=1,\ldots,d$, is such that 
  $x^i_t-x^i_s=\langle X_{st},e_i\rangle$, $s,t\in[0,1]$, we say that $X$ is a $\gamma$-rough path over $(x^1,\dotsc,x^d)$.
  \label{dfn:genRP}
\end{dfn}

\begin{rmk}
  By specializing this definition to different choices of $\HH$ we recover both \emph{geometric rough paths} \cite{Lyons1998} where $\HH$ is the shuffle Hopf algebra over an alphabet, \emph{branched rough paths} \cite{Gubinelli2010} where $\HH$ is the Butcher--Connes--Kreimer Hopf algebra on decorated non-planar rooted trees, and also \emph{planarly branched rough paths} \cite{Curry2018}.
\end{rmk}

We remark that there is a bijection between
\begin{enumerate}
\item functions $X\colon[0,1]^2\to G^N$ such that $X_{su}\star X_{ut}=X_{st}$,  for all $s,u,t\in[0,1]$,
\item functions $\bb X \colon[0,1]\to G^N$ such that $\bb X_0=\1$,
\end{enumerate}
given by 
\begin{equation}\label{eq:XbbX}
X\mapsto \bb X, \quad \bb X_t\coloneq X_{0t}, \qquad \bb X\mapsto X, \quad X_{st}\coloneq\bb X_s^{-1}\star\bb X_t, \qquad s,t\in[0,1].
\end{equation}

\begin{prp}  \label{prp:genrphold}
  Let $\bb X\colon[0,1]\to G^N$ and $X\colon[0,1]^2\to G^N$ as in \eqref{eq:XbbX}.
  Then $X$ is a $(\HH,\gamma)$-rough path as in \Cref{dfn:genRP} if and only if $\bb X$ is $\gamma$-Hölder with respect to the metric $\rho_N$ defined in \eqref{eq:homnorm}.
\end{prp}
\begin{proof}
  First note that the distance in \eqref{eq:rho} is defined with respect to a fixed (but arbitrary) basis so we use the basis $B$ fixed at the beginning of this section.
  Also, due to the above remark we only have to verify that $\bb X$ is $\gamma$-Hölder with respect to $\rho_N$ if and only if $X$ satisfies \eqref{eq:genrpbound} using the same basis.
  In one direction, if $\bb X$ is $\gamma$-Hölder then, by definition
  \[ |X_{st}|=\rho_N(\bb X_s,\bb X_t)\lesssim|t-s|^\gamma \]
  and so, for a basis element $v\in B$ we have
  \[ |\langle X_{st},v\rangle|\lesssim |t-s|^{\gamma|v|}. \]
  Conversely, if \eqref{eq:genrpbound} holds then $|X_{st}|\lesssim|t-s|^{\gamma}$ and so by definition also $\rho_N(\bb X_s,\bb X_t)\lesssim|t-s|^\gamma$, i.e. $\bb X$ is $\gamma$-Hölder with respect to $\rho_N$.
\end{proof}

We now come to the problem of existence.
Our construction of a rough path in the sense of \Cref{dfn:genRP} over an arbitrary collection of $\gamma$-Hölder paths $(x^1,\dotsc,x^d)$ relies in the following extension theorem. We note that the
proof is a reinterpretation of the approach of Lyons-Victoir \cite[Theorem 1]{Lyons2007} in the context of a more general graded Hopf-algebra $\HH$.
\begin{thm}[Rough path extension]
Let $1\le n\le N-1$ and $\gamma\in\,]0,1[$ such that $\gamma^{-1}\not\in\N$. Suppose we have a $\gamma$-H\"older path $\bb X^n\colon[0,1]\to(G^n,\rho_n)$.
  There is a $\gamma$-H\"older path ${\bb X}^{n+1}\colon[0,1]\to(G^{n+1},\rho_{n+1})$ extending $\bb X^n$, i.e. such that $\left.{\bb X}^{n+1}\right|_{\HH_n}=\bb X^n$.
  \label{thm:charext}
\end{thm}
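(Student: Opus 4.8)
The plan is to construct $\tilde{\bb X}$ level by level, reducing everything to the top homogeneous component $\HH_{(n+1)}$. First I would pass to the two-variable picture: by \Cref{prp:genrphold}, giving $\tilde{\bb X}$ amounts to giving a map $\tilde X:[0,1]^2\to G^{n+1}$ satisfying Chen's rule and the bound \eqref{eq:genrpbound}, whose restriction to $\HH_n$ is the known $X_{st}=\bb X_s^{-1}\star\bb X_t$. Writing $\tilde X_{st}=X_{st}\star(\varepsilon+Z_{st})$ with $Z_{st}\in\HH_{(n+1)}^*$ the only unknown, Chen's rule $\tilde X_{su}\star\tilde X_{ut}=\tilde X_{st}$ together with \Cref{rmk:nfoldcoprod} (which says $\Delta'_n$ maps $\HH_{(n+1)}$ into $\HH_{(1)}^{\otimes(n+1)}$, so the top component is primitive relative to $\HH_n$) collapses to an \emph{additive} cocycle condition
\[
Z_{st} = Z_{su} + Z_{ut} + C_{sut},
\]
where $C_{sut}$ is an explicit expression built from $X_{su}$ and $X_{ut}$ on degrees $\le n$ — precisely the right-hand side of the analogue of \eqref{redcop}. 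Since the lower levels already satisfy Chen's rule, $C$ is a $\delta$-coboundary in the sense of \eqref{eq:delta}: $\delta C\equiv 0$. So the problem becomes: find $Z:[0,1]^2\to\HH_{(n+1)}^*$ with $\delta Z = -C$ and $|Z_{st}|\lesssim|t-s|^{(n+1)\gamma}$, i.e. invert $\delta$ with the correct Hölder gain.

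Next I would run the Lyons--Victoir dyadic construction componentwise on $\HH_{(n+1)}^*$ (a finite-dimensional space). Because $(n+1)\gamma>1$ is \emph{false} here — we are in the regime $(n+1)\le N$ so $(n+1)\gamma\le 1$, and since $\gamma^{-1}\notin\N$ in fact $(n+1)\gamma<1$ unless $n+1<N$... more precisely $(n+1)\gamma\le N\gamma<1$ is not guaranteed, but $(n+1)\gamma<1$ holds whenever $n+1\le N$ and $\gamma^{-1}\notin\N$ — the Sewing Lemma does not apply and we genuinely must \emph{choose} an extension. The construction: define $Z$ first on dyadic intervals by the recursion $Z_{s,t} := Z_{s,m}+Z_{m,t}+C_{s,m,t}$ with $m$ the dyadic midpoint, starting from $Z_{k/2^j,(k+1)/2^j}$ on the finest available scale set, say, to a minimal-size solution (e.g. $0$ or a symmetric choice — this is where an arbitrary choice enters, cf. \Cref{rmk:change}); then check that the telescoping sums converge as the dyadic mesh refines, using the estimate $|C_{sut}|\lesssim|t-s|^{(n+1)\gamma}$ (this bound comes from the Hölder bounds on $X$ at levels $\le n$, since $C$ is a sum of products $\langle X_{su},\cdot\rangle\langle X_{ut},\cdot\rangle$ over the reduced coproduct, with degrees adding to $n+1$) and the fact that $2^{-j(n+1)\gamma}$ is summable because $(n+1)\gamma>0$; finally extend $Z$ to arbitrary $(s,t)$ by continuity/density and verify $\delta Z=-C$ and the Hölder bound everywhere.

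The main obstacle is the convergence and Hölder estimate for $Z$ in the regime $(n+1)\gamma\le 1$: one must show the dyadic approximations form a Cauchy sequence in the right Hölder seminorm and that the limiting $Z_{st}$ satisfies $|Z_{st}|\lesssim|t-s|^{(n+1)\gamma}$ uniformly, not just on dyadics. This is exactly the technical heart of Lyons--Victoir; the key inputs are (i) the $\delta$-closedness $\delta C\equiv 0$, which guarantees the dyadic recursion is consistent across scales, and (ii) the homogeneous sub-additivity of $|\cdot|$ on $G^{n+1}$ from \Cref{prp:subadd}, which lets one transfer the additive estimate on $Z\in\HH_{(n+1)}^*$ back into a genuine $\rho_{n+1}$-Hölder bound on $\tilde{\bb X}$ via \Cref{prp:genrphold}. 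Everything else — verifying $\tilde{\bb X}_0=\varepsilon$, multiplicativity of $\tilde{\bb X}_t$ as a character (automatic since $\HH_{(n+1)}^*\subset W_{n+1}$ embeds correctly, using \Cref{rmk:nfoldcoprod} and the BCH machinery of \Cref{lmm:phik}), and that the restriction to $\HH_n$ is unchanged — is routine bookkeeping.
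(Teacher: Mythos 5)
Your overall framework is right and matches the paper's strategy: reduce to finding a function $Z$ valued in $W_{n+1}$ (the centre of $\g^{n+1}$, so that multiplication by $\varepsilon+Z$ commutes) solving an additive cocycle identity whose right-hand side $C_{sut}=\BCH_{(n+1)}(L_{su},L_{ut})$ is determined by the known levels, then run a dyadic Lyons--Victoir construction and invoke \Cref{lmm:LV}. However, the dyadic construction you describe is going the wrong way, and the summability criterion you cite is the wrong one. You set $Z$ to zero at ``the finest available scale'' and propagate \emph{up} via $Z_{s,t}=Z_{s,m}+Z_{m,t}+C_{s,m,t}$, appealing to the summability of $2^{-j(n+1)\gamma}$ (``because $(n+1)\gamma>0$''). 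This fails: in the regime $(n+1)\gamma<1$ there are $2^{j}$ defects of size $\sim 2^{-j(n+1)\gamma}$ at level $j$, so the contribution to $Z_{0,1}$ from level $j$ scales like $2^{j(1-(n+1)\gamma)}$, which \emph{diverges} as $j\to\infty$ precisely because $(n+1)\gamma<1$. A telescoping sum starting from zero at the finest scale does not converge, and ``summable because $(n+1)\gamma>0$'' is the criterion for the sewing-lemma regime $(n+1)\gamma>1$, which is the opposite of the one we are in.

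The paper's construction runs the recursion from coarse to fine: set $Z_{0,1}=0$ and, at each dyadic split of a known interval, \emph{distribute} the defect equally to the two halves, as in \eqref{eq:Zdef}: $Z_{t^m_{2k},t^m_{2k+1}}=Z_{t^m_{2k+1},t^m_{2k+2}}=\tfrac12 Z_{t^{m-1}_k,t^{m-1}_{k+1}}-\tfrac12\BCH_{(n+1)}(L_{su},L_{ut})$. Tracking $a_m=2^{m(n+1)\gamma}\max_k\lrn{Z_{t^m_k,t^m_{k+1}}}$ one gets $a_m\le 2^{(n+1)\gamma-1}a_{m-1}+C$, a genuine contraction because $2^{(n+1)\gamma-1}<1$, i.e.\ because $(n+1)\gamma<1$ (this is exactly where $\gamma^{-1}\notin\N$ enters; note also that $N\gamma<1$ is in fact guaranteed by $N=\lfloor\gamma^{-1}\rfloor$, so your hedge ``$N\gamma<1$ is not guaranteed'' is off). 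This is the technical heart of the argument and it is what your version is missing. The remaining steps (the cocycle reduction via \Cref{rmk:nfoldcoprod}, the use of \Cref{lmm:LV}, the sub-additivity of the homogeneous norm from \Cref{prp:subadd}) you have correctly in place.
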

A key tool is the following technical lemma whose proof can be found in \cite[Lemma 2]{Lyons2007}.
\begin{lmm}\label{lmm:LV}
  Let $(E,\rho)$ be a complete metric space and set \[D=\{t_k^m\coloneq k2^{-m}:m\ge 0,k=0,\dotsc,2^m-1\}.\]
  Suppose $y\colon D\to E$ is a path satisfying the bound $\rho(y_{t^m_k},y_{t^m_{k+1}})\lesssim 2^{-\gamma m}$ for some $\gamma\in(0,1)$.
  Then, there exists a $\gamma$-Hölder path $x\colon[0,1]\to E$ such that $\left.x\right|_D=y$.
\end{lmm}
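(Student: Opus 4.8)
The plan is to run the classical dyadic-chaining argument — the deterministic heart of Kolmogorov's continuity criterion. I would first upgrade the nearest-neighbour bound on the dyadic set $D$ to a genuine $\gamma$-Hölder bound on all of $D$, and then extend from the dense set $D$ to the whole interval $[0,1]$ using uniform continuity together with completeness of $(E,\rho)$.

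For the first step, write $K$ for the implied constant, so $\rho(y_{t^m_k},y_{t^m_{k+1}})\le K\,2^{-\gamma m}$ for all admissible $m,k$. For a dyadic rational $s\in D$ I would let $s^{(m)}\coloneq\lfloor 2^m s\rfloor 2^{-m}$ be the level-$m$ truncation of the (finite) binary expansion of $s$; then $s^{(m)}\in D$, $0\le s-s^{(m)}<2^{-m}$, $s^{(m)}=s$ for $m$ large, and $s^{(m+1)}-s^{(m)}\in\{0,2^{-m-1}\}$, the nonzero case being a consecutive dyadic pair at level $m+1$ lying in $[0,1]$, whence $\rho(y_{s^{(m)}},y_{s^{(m+1)}})\le K\,2^{-\gamma(m+1)}$. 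Telescoping along $s^{(m)},s^{(m+1)},\dotsc,s$ and summing the geometric series (which converges since $\gamma\in(0,1)$ gives $2^\gamma>1$) yields $\rho(y_{s^{(m)}},y_s)\le \tfrac{K}{2^\gamma-1}2^{-\gamma m}$. Then, given $s<t$ in $D$, I would pick $m$ with $2^{-m-1}\le t-s\le 2^{-m}$; since $t^{(m)}\le t$ and $s-s^{(m)}<2^{-m}$, the points $s^{(m)},t^{(m)}$ are multiples of $2^{-m}$ in $[0,1]$ with $0\le t^{(m)}-s^{(m)}<2^{1-m}$, hence equal or a consecutive pair at level $m$, so $\rho(y_{s^{(m)}},y_{t^{(m)}})\le K\,2^{-\gamma m}$, and the triangle inequality gives
\[
 \rho(y_s,y_t)\le K\Bigl(1+\tfrac{2}{2^\gamma-1}\Bigr)2^{-\gamma m}\le C\,|t-s|^\gamma,\qquad C\coloneq 2^\gamma K\bigl(1+2/(2^\gamma-1)\bigr),
\]
using $2^{-m}\le 2(t-s)$. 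This shows $y$ is $\gamma$-Hölder on $D$ with constant $C$.

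For the second step, since $y$ is (uniformly) continuous on the dense set $D$ and $(E,\rho)$ is complete, for each $t\in[0,1]$ the values $y_{t_n}$ along any sequence $D\ni t_n\to t$ form a Cauchy sequence; I would define $x_t$ as their limit, check independence of the approximating sequence by interleaving two such sequences, note $x|_D=y$, and then recover the Hölder bound $\rho(x_s,x_t)\le C|t-s|^\gamma$ by passing to the limit in the bound for $y$ along dyadic approximants of $s$ and $t$, using continuity of $\rho$. This produces the desired $\gamma$-Hölder extension $x:[0,1]\to E$.

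I do not expect a genuine obstacle: the whole content is the chaining estimate, and the only thing demanding care is the bookkeeping in the first step — verifying that the auxiliary points $s^{(m)},t^{(m)}$ remain in $[0,1]$ and form admissible consecutive dyadic pairs so that the hypothesis applies, and tracking the geometric-series constant. The second step is the routine extension of a uniformly continuous map from a dense subset into a complete space. (This reproduces the argument of \cite[Lemma 2]{Lyons2007}.)
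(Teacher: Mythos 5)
Your chaining argument is correct and complete: the step from nearest-neighbour bounds on dyadics to a genuine H\"older bound on $D$, via truncations $s^{(m)}$ and a geometric series, and then the extension to $[0,1]$ by density and completeness, is exactly the standard proof. Note that the paper does not actually prove this lemma but cites it directly to \cite[Lemma 2]{Lyons2007}, which is the same argument you reconstruct; the only cosmetic remark is that, as stated, $D$ omits the endpoint $1$ while the hypothesis implicitly uses $y_1$ (take $k=2^m-1$), so one should read $D$ as including $1$.
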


\begin{proof}[Proof of \Cref{thm:charext}]
  The construction of ${\bb X}^{n+1}$ is made in two steps.
  \paragraph{\bfseries Step 1.}
  For $m\geq 0$ and $k\in\{0,\ldots,2^m\}$ we define $ t^m_k\coloneq k2^{-m}\in[0,1]$. Then we define the following sets of dyadics in $[0,1]$
  \[
D_{(m)}\coloneq\{t^m_k \mid k=0,\dotsc,2^m\}, \qquad D_m\coloneq\bigcup_{n=0,\ldots,m} D_{(n)},
\qquad D\coloneq\bigcup_{m\geq 0} D_{(m)}.
  \]
  Set $X_{st}=(\bb X^n_s)^{-1}\star\bb X_t^n\in G^n$ and $L_{st} = \log_n(X_{st})\in\g^n$ where $\log_n$ was defined in \eqref{eq:truncexp}.
  Then, the Baker--Campbell--Hausdorff formula \eqref{bch} and Chen's rule \eqref{eq:chen} imply that
  \begin{equation}
   L_{st} = \BCH_n(L_{su},L_{ut}). 
  \label{eq:BCHL}
  \end{equation}
  
  We look for $Y\colon[0,1]^2\to G^{n+1}$ such that $Y$ satisfies Chen's rule \eqref{eq:chen} and 
  $Y\big|_{\HH_N}=X$. We use 
  throughout the proof that $\g^n\subset\g^{n+1}$, see \Cref{lmm:incl}. 
  
  In a first step, we define $Y\colon D\times D\to G^{n+1}$. In the second step we show that $Y$ has suitable uniform
  continuity properties and can thus be extended to $[0,1]^2$ using \Cref{lmm:LV}.
  
  The construction of $Y\colon D\times D\to G^{n+1}$ goes through a construction of $Y^m\colon D_m\times D_m\to G^{n+1}$ by recursion on $m\geq 0$.
We claim that for all $m\geq 0$ we can find $Y^m$ such that
\begin{enumerate}
\item $Y^m$ satisfies Chen's relation on $D_m$, namely 
$Y_{ab}^m\star Y_{bc}^m=Y^m_{ac}$ for all $a,b,c\in D_m$
\item for any $n\in\{0,\ldots,m\}$ and $k,\ell\in\{0,\ldots,2^{m-n}\}$, we have the compatibility relation
\[
Y^m_{t^m_{k2^n}t^m_{\ell 2^n}}=Y^{m-n}_{t^{m-n}_kt^{m-n}_\ell}.
\]
\item $Y^m$ restricted to $\HH_n$ is equal to $X\colon D_m\times D_m\to G^n$, in the sense that
\[
\left.Y^m_{ab}\right|_{\HH_n} = X_{ab}, \qquad \forall \, a,b\in D_m.
\]
\item for all $k=0,\ldots,2^m-1$, setting 
$$Z^m_{t^m_kt^m_{k+1}}\coloneq\log_{n+1}\left(Y^m_{t^m_kt^m_{k+1}}\star \exp_{n+1}\left(-L_{t^m_kt^m_{k+1}}\right)\right),$$
we have $Z^m_{t^m_kt^m_{k+1}}\in W_{n+1}$.
\end{enumerate}

For $m=0$, we set 
$Y_{01}^0=\exp_{n+1}(L_{01})$, $Y_{00}^0=Y_{11}^0\coloneq\varepsilon$, and $Z^0_{01}\coloneq0\in W_{n+1}$. For $x\in\HH_n$,
we have $\langle\exp_{n+1}(L_{01}),x\rangle=\langle\exp_{n}(L_{01}),x\rangle$, so that $Y^0$ restricted to $\HH_n$ is equal to $X\colon D_0\times D_0\to G^n$.

Let now $m\geq 1$, and suppose that $Y^{m-1}\colon D_{m-1}\times D_{m-1}\to G^{n+1}$ has been constructed with the above properties.
We start by defining $Y^m_{tt}=\varepsilon$ for all $t\in D_{(m)}$.
Let us consider three consecutive points in $D_{(m)}$ of the form 
$$s=t^m_{2k}, \qquad u=t^m_{2k+1}, \qquad t=t^m_{2k+2}$$ 
for some $k=0,\ldots, 2^{m-1}-1$. Note that $s=t^{m-1}_k$ and $t=t^{m-1}_{k+1}$, so that $Z^m_{st}\coloneq Z^{m-1}_{st}\in W_{n+1}$ is already defined by the recurrence hypothesis. We define $Z^m_{su}$ and $Z^m_{ut}$ as follows
 \begin{equation} \label{eq:Zdef0}    
    Z^m_{su}=Z^m_{ut}\coloneq\frac{1}{2}\big(Z^{m-1}_{st}-\BCH_{(n+1)}(L_{su},L_{ut})\big), 
     \end{equation}
where $\BCH_{(n+1)}=\BCH_{n+1}-\BCH_n\colon\g^{n+1}\times\g^{n+1}\to W_{n+1}$, see \eqref{eq:bch}. Since by recurrence $Z^{m-1}_{st}\in W_{n+1}$, we obtain that $Z^m_{su},Z^m_{ut}\in W_{n+1}$ and 
 \begin{equation}
    \label{eq:Zdef}
 Z^m_{su}+Z^m_{ut}=Z^{m-1}_{st}-\BCH_{(n+1)}(L_{su},L_{ut}) = L_{st}+Z^{m}_{st} -\BCH_{n+1}(L_{su},L_{ut})
  \end{equation}
  where in the last equality we have applied \eqref{eq:BCHL}. Then we set 
  \[
  Y^m_{su}\coloneq\exp_{n+1}(L_{su}+Z^m_{su}), \qquad
  Y^m_{ut}\coloneq\exp_{n+1}(L_{ut}+Z^m_{ut}).
  \]
Since $\exp_{n+1}(W_{n+1})$ is in the center of $G^{n+1}$ by \Cref{pr:centre}, we obtain that
  \[ Y^m_{su} =\exp_{n+1}(L_{su})\star\exp_{n+1}(Z^m_{su}),\qquad Y^m_{ut}=\exp_{n+1}(L_{ut})\star
  \exp_{n+1}(Z^m_{ut}). \]
  By \eqref{bch} and \eqref{eq:Zdef} the product is equal to
  \[
  Y^m_{su}\star Y^m_{ut} = \exp_{n+1}(\BCH_{n+1}(L_{su},L_{ut})+Z^m_{su}+Z^m_{ut})
  =\exp_{n+1}(L_{st} + Z^m_{st})=Y^m_{st}.
  \]
  Let now $t^m_j,t^m_k\in D_{(m)}$ with $0\leq j<k\leq 2^m$.
We set
  \[ Y^m_{t^m_jt^m_k}\coloneq Y^m_{t^m_jt^m_{j+1}}\star \dotsm\star Y^m_{t^m_{k-1}t^m_k}, 
  \qquad Y^m_{t^m_kt^m_j}\coloneq \left(Y^m_{t^m_jt^m_k}\right)^{-1} \]
  so that the identity $Y^m_{ab}\star Y^m_{bc}=Y^m_{ac}$ is valid for any $a,b,c\in D_{(m)}$.
  
We need now to check that this definition is compatible with the values already constructed on $D_{m-1}\times D_{m-1}$. By the recursion assumption, it is enough to show that for all $k,\ell\in\{0,\ldots,2^{m-1}\}$
\[
Y^m_{t^m_{2k}t^m_{2\ell}}=Y^{m-1}_{t^{m-1}_kt^{m-1}_\ell}.
\]
If $k=\ell$ or $|k-\ell|=1$, then this is true by construction. Otherwise, if for example $k+1<\ell$ then
\[
Y^m_{t^m_{2k}t^m_{2\ell}} = Y^m_{t^m_{2k}t^m_{2k+2}}\star\cdots\star Y^m_{t^m_{2\ell-2}t^m_{2\ell}}
= Y^{m-1}_{t^{m-1}_{k}t^{m-1}_{k+1}}\star\cdots\star Y^{m-1}_{t^{m-1}_{\ell-1}t^{m-1}_{\ell}}
=Y^{m-1}_{t^{m-1}_kt^{m-1}_\ell}
\]
by the recursion property and the Chen relation satisfied by $Y^m$ (respectively $Y^{m-1}$) on $D_{(m)}$
(resp. $D_{(m-1)}$). 

We also have to check the extension property: for $x\in\HH_n$ we have
\[
\langle Y^m_{t^m_jt^m_{j+1}},x\rangle = \langle \exp_{n+1}(L_{t^m_jt^m_{j+1}})\star
\exp_{n+1}(Z^m_{t^m_jt^m_{j+1}}),x\rangle
=\langle \exp_{n}(L_{t^m_jt^m_{j+1}}),x\rangle = \langle X_{t^m_jt^m_{j+1}},x\rangle.
\]

By recurrence, we have proved that $Y^m\colon D_m\times D_m\to G^{n+1}$ is well defined for all $m\geq 0$,
with the above properties. 
Therefore, we can unambiguously define $Y\colon D\times D\to G^{n+1}$,
\[
Y_{st}\coloneq Y^m_{st}, \qquad s,t\in D_m,
\]
and $Y$ indeed satisfies the Chen relation on $D$, namely $Y_{ab}\star Y_{bc}=Y_{ac}$ for all $a,b,c\in D$,
and the restriction property
\[
\langle Y_{ab},x\rangle = \langle X_{ab},x\rangle, \qquad \forall \, a,b\in D, \ x\in\HH_n.
\]

  \paragraph{\bfseries Step 2.}
  In order to have a $(\HH_{n+1},\gamma)$-H\"older path, \Cref{dfn:genRP} requires us to construct a $\gamma$-Hölder path with values in $G^{n+1}$, and for this we will use \Cref{lmm:LV}.
  Set \[ a_m\coloneq 2^{m(n+1)\gamma}\max_{k=0,\dotsc,2^m-1}\lrn{Z^m_{t^m_kt^m_{k+1}}}_{n+1}. \]
  Then, if $\upsilon$ is a basis element in $\HH_{(n+1)}$ we have by \eqref{eq:phik}, for $s=t^m_k$, $u=t^m_{k+1}$ and $t=t^m_{k+2}$
  \begin{equation*}
    |\langle\BCH_{(n+1)}(L_{su},L_{ut}),\upsilon\rangle| \!\le \!\sum_{(\upsilon)}\!\sum_{i+j=n+1}\frac{1}{i!j!}\!\sum_{\sigma\in S_{n+1}}\!\!|a_\sigma|\prod_{p=1}^i|\langle L_{su},\upsilon_{(\sigma^{-1}(p))}\rangle|\prod_{q=i+1}^{n+1}\!\!|\langle L_{ut},\upsilon_{(\sigma^{-1}(q))}\rangle|.
  \end{equation*}
Now, since $\upsilon_{(j)}\in\HH_{(1)}$ for all $j=1,\dotsc,n+1$ we actually have that
  \[ |\langle L_{su},\upsilon_{(j)}\rangle|\le \sum_{k=1}^d|x_u^k-x_s^k||\upsilon_{(j)}^k|\le 2^{-m\gamma}\sum_{k=1}^d|\upsilon_{(j)}^k| \]
  for some coefficients $\upsilon_{(j)}^k\in\R$ such that $\upsilon_{(j)}=\sum_{k=1}^d\upsilon_{(j)}^k e_k$, and we have a similar estimate for $L_{ut}$ instead of $L_{su}$.
  Therefore we obtain that
  \[ \lrn{\BCH_{(n+1)}(L_{su},L_{ut})}_{n+1} \le C \,2^{-m(n+1)\gamma}, \]
  where
    \[ C = K\max_{\upsilon}\sum_{(\upsilon)}\sum_{i+j=n+1}\frac{1}{i!j!}\sum_{\sigma\in S_{n+1}}|a_\sigma|\sum_{k_1,\dotsc,k_{n+1}=1}^{n+1}\prod_{\ell=1}^{n+1}|\upsilon_{(\ell)}^{k_\ell}|.
  \]
  Therefore, from \eqref{eq:Zdef0} we get
  \[
  \max_{k=0,\dotsc,2^m-1}\lrn{Z^m_{t^m_kt^m_{k+1}}}_{n+1} \leq \frac12\max_{k=0,\dotsc,2^{m-1}-1}\lrn{Z^{m-1}_{t^{m-1}_k,t^{m-1}_{k+1}}}_{n+1} + \frac12\, C \,2^{-m(n+1)\gamma} 
  \]
  hence
  \begin{equation*}
    a_{m}\le 2^{(n+1)\gamma-1}a_{m-1}+\frac C2, \qquad m\geq 1.
  \end{equation*}
  Since $a_0=0$ we can show by recurrence on $m\geq 0$
  \[ a_m\le \frac C2\sum_{j=0}^{m-1}2^{-j(1-(n+1)\gamma)}. \]
  Since we are in the regime where $(n+1)\gamma<1$ (here we use that $\gamma^{-1}\notin\N$) we obtain that
  \[
   \sup_{m\ge 0}a_m \le\frac{C}{2-2^{(n+1)\gamma}}.
  \]
  Therefore
  \begin{equation}\label{eq:boundZ}
  \lrn{Z^m_{t^m_kt^m_{k+1}}}_{n+1} \lesssim 2^{-m(n+1)\gamma}, \qquad \forall \ {m\geq 0}, \ {k=0,\dotsc,2^m-1}.
  \end{equation}

  Let now fix $m\geq 0$, $i\in\{0,\ldots,2^m-1\}$, and set $s\coloneq t^m_j$, $t\coloneq t^m_{j+1}$. Then we want to prove that
   $\left|Y_{st}\right|\lesssim 2^{-m\gamma}$,
 see \eqref{eq:homnorm} for the definition of $|\cdot|$. By subadditivity of $|\cdot|$ w.r.t. the convolution
 product $\star$ we have
 \[
\begin{split}
\left|Y_{st}\right| & \leq \left|\exp_{n+1}(L_{st})\right|+\left|\exp_{n+1}(
Z^m_{st})\right|.
\end{split} 
 \]
 By \Cref{lmm:LV7} and \eqref{eq:boundZ}
 \[
\begin{split}
\left|\exp_{n+1}(Z^m_{st})\right|\lesssim \lrn{Z^m_{t^m_kt^m_{k+1}}}_{n+1}^{\frac1{n+1}} \lesssim 2^{-m\gamma}.
\end{split} 
 \]
 Moreover, using \Cref{lmm:LV7} again (first the upper bound, then the lower bound) and the fact that ${\bb X^n}\colon[0,1]\to G^n$ is $\gamma$-H\"older by assumption,
 \[
\begin{split}
\left|\exp_{n+1}(L_{st})\right|& \leq C_{n+1}\sup_{k=1,\ldots,n+1} \lrn{\left(L_{st}\right)_k}^{1/k}
= C_{n+1}\sup_{k=1,\ldots,n} \lrn{\left(L_{st}\right)_k}^{1/k} \leq 
\\ & \leq \frac{C_{n+1}}{c_n} \, \left|\exp_{n}(L_{st})\right| = \frac{C_{n+1}}{c_n} \, \left|X_{st}\right| 
=\frac{C_{n+1}}{c_n} \, \rho_{n}\left(\bb X^{n}_{t^m_j},\bb X^{n}_{t^m_{j+1}}\right)\lesssim 2^{-m\gamma}.
 \end{split}
 \]
   Therefore, the path $\bb X^{n+1}\colon D\to G^{n+1}$ defined by $\bb X^{n+1}_{t^m_j}\coloneq Y_{0,t^m_j}$ satisfies
  \[ 
  \rho_{n+1}\left(\bb X^{n+1}_{t^m_j},\bb X^{n+1}_{t^m_{j+1}}\right)\lesssim 2^{-m\gamma}, 
  \]
  thus by \Cref{lmm:LV} we obtain a $\gamma$-H\"older path ${\bb X}^{n+1}\colon[0,1]\to G^{n+1}$ extending 
  $\bb X^{n}$.
\end{proof}

\begin{rmk}\label{rmk:change}
  Our construction depends on a finite number of choices, namely we set $Z_{01}=0$ to start the recursion in \eqref{eq:Zdef}, and this for each level; moreover in \eqref{eq:Zdef} we make the choice $Z_{t^m_{2k},t^m_{2k+1}}=Z_{t^m_{2k+1},t^m_{2k+2}}$. These choices are the same as in \cite[Proof of Theorem 1]{Lyons2007} and are indeed the most natural ones, but one could change them and the
final outcome would be different.
\end{rmk}
\begin{rmk}
While in \cite[Proof of Proposition 6]{Lyons2007} Lyons and Victoir use the axiom of choice, our proof is completely constructive. In particular, we use the explicit map $\exp_{k+1}\circ\log_k \colon G^k(\T_n) \to G^{k+1}(\T_n)$ which plays the role
of the injection $i_{G/K,G}\colon G/K\to G$ in \cite[Proposition 6]{Lyons2007}. The fact that this map has good
continuity estimates is based on \Cref{lmm:LV7}.
\end{rmk}

\begin{crl}\label{crl:3.8}
  Given $\gamma\in\,]0,1[$ with $\gamma^{-1}\notin\N$ and a collection of $\gamma$-Hölder paths $x^i\colon[0,1]\to\R$, $i=1,\ldots,d$, there exists a $\gamma$-H\"older path $\bb X\colon[0,1]\to G^N$ such that $\langle \bb X,e_i\rangle =x^i-x^i_0$, $i=1,\ldots,d$. Then $X_{st}\coloneq\bb X_s^{-1}\star\bb X_t$
defines a $(\HH,\gamma)$-rough path over $(x^1,\dotsc,x^d)$.
\end{crl}
\begin{proof}
  We start with the following observation: for $n=1$, the group $G^1\subset\HH_{(1)}^*$ is abelian, and isomorphic to the additive group $\HH_{(1)}^*$.
  Indeed, let $X,Y\in G^1$ and $x\in\HH_{(1)}$. Then, as $\Delta x=x\otimes\1+\1\otimes x$ by the grading, we have that
  \begin{align*}
    \langle X\star Y,x\rangle = \langle X,x\rangle +\langle Y,x\rangle,
  \end{align*}
  that is, $X\star Y=X+Y$.
  Moreover, in $\HH_1$ the product $xy=0$.
  Therefore, we may set $\langle\bb X^1_t,e_i\rangle\coloneq x^i_t-x^i_0$ where $\{e_1,\dotsc,e_d\}$ is a basis of $\HH_{(1)}$ and this path is $\gamma$-Hölder with respect to $\rho_1$.

  By \Cref{thm:charext} there is a $\gamma$-Hölder path $\bb X^2\colon[0,1]\to(G^2,\rho_2)$ extending $\bb X^1$ so in particular $\langle\bb X^2_t,e_i\rangle=x^i_t-x^i_0$ also.
  Continuing in this way we obtain successive $\gamma$-Hölder extensions $\bb X^3,\dotsc,\bb X^N$ and we set $\bb X\coloneq\bb X^N$.
\end{proof}

The following result has already been proved in the case where the underlying Hopf algebra $\HH$ is \emph{combinatorial} by Curry, Ebrahimi-Fard, Manchon and Munthe-Kaas in \cite[Theorem 4.3]{Curry2018}.
We remark that their proof works without modifications in our context so we have

\begin{thm}
  Let $\bb X\colon[0,1]\to G^N$ be a $\gamma$-H\"older path with $\bb X_0=\1$ and suppose that
  $\gamma^{-1}\notin\N$. There exists a path $\hat{\bb X}\colon[0,1]\to G$ such that $|\langle\hat{\bb X}_s^{-1}\star\hat{\bb X}_t,v\rangle|\lesssim|t-s|^{\gamma|v|}$ for all homogeneous $v\in\HH$ and extending $\bb X$, in the sense that $\hat{\bb X}\big|_{\HH_N}=\bb X$.
  \label{thm:geomext}
\end{thm}
\begin{rmk}
  In view of \Cref{thm:geomext} we can replace the truncated group in \Cref{dfn:genRP} by the full group of characters $G$.
  What this means is that $\gamma$-rough paths are uniquely defined once we fix the first $N$ levels and since $\HH$ is locally finite, this amounts to a finite number of choices. This is of course a generalization of the extension theorem
  of \cite{Lyons1998}, see also \cite[Theorem 7.3]{Gubinelli2010} for the branched case.
\end{rmk}

\section{Applications}
\label{sse:bRP}
We now apply \Cref{thm:charext} to various kinds of Hopf algebras in order to link this result with the contexts already existing in the literature.

\subsection{Geometric rough paths}
\label{sss:grp}
In this setting we fix a finite alphabet $A\coloneq\{1,\dotsc,d\}$.
As a vector space $\HH\coloneq T(A)$ is the linear span of the \emph{free monoid} $\mathrm M(A)$ generated by $A$. The product on $ \HH$ is the shuffle product $\shuffle\colon \HH\otimes \HH\to \HH$
defined recursively by $\1\shuffle v=v\shuffle\1=v$ for all $v\in \HH$, where $\1\in\mathrm M(A)$ is the unit for the monoid operation, and
\[ (au\shuffle bv) = a(u\shuffle bv) + b(au\shuffle v) \]
for all $u,v\in \HH$ and $a,b\in A$, where $au$ and $bv$ denote the product of the letters $a,b$ with the words $u,v$ in $\mathrm M(A)$.

The coproduct $\bar\Delta\colon\HH\to \HH\otimes \HH$ is obtained by \emph{deconcatenation} of words,
\[ \bar\Delta(a_1\dotsm a_n) = a_1\dotsm a_n\otimes\1+\1\otimes a_1\dotsm a_n + \sum_{k=1}^{n-1}a_1\dotsm a_k\otimes a_{k+1}\dotsm a_n.\]
It turns out that $(\HH,\shuffle,\bar\Delta)$ is a commutative unital Hopf algebra, and $(\HH,\bar\Delta)$ is the cofree coalgebra over the linear span of $A$.
The antipode is the linear map $S\colon\HH\to \HH$ given by
\[ S(a_1\dotsm a_n) = (-1)^na_n\dots a_1. \]
Finally, we recall that $\HH$ is graded by the length $\ell(a_1\dotsm a_n) = n$ and it is also connected.
The homogeneous components $\HH_{(n)}$ are spanned by the sets $\{a_1\dotsm a_n:a_i\in A\}$.

\Cref{dfn:genRP} specializes in this case to \emph{geometric rough paths} (GRP) as defined in \cite{Hairer2014} (see just below for the precise definition) and \Cref{thm:charext} coincides with \cite[Theorem 6]{Lyons2007}.
\begin{dfn}\label{def:geo}
  Let $\gamma\in\,]0,1[$ and set $N\coloneq\lfloor\gamma^{-1}\rfloor$. A \emph{geometric $\gamma$-rough path} is a map $X\colon[0,1]^2\to G^N$ which satisfies Chen's rule
    \[ X_{st} = X_{su}\star X_{ut} \]
  for all $s,u,t\in[0,1]$ and the analytic bound $|\langle X_{st},v\rangle|\lesssim|t-s|^{\gamma\ell(v)}$ for all $v\in \HH_N$.
\end{dfn}
Then \Cref{prp:genrphold} and the existence results \Cref{thm:charext}-\Cref{crl:3.8} are the content of the paper \cite{Lyons2007} by Lyons and Victoir.

\subsection{Branched rough paths}
\label{sec:brp}
Let $\T$ be the collection of all non-planar non-empty rooted trees with nodes decorated by $\{1,\dotsc,d\}$.
Elements of $\T$ are written as $2$-tuples $\tau=(T,c)$ where $T$ is a non-planar tree with node set $N_T$ and edge set $E_T$, and $c\colon N_T\to\{1,\dotsc,d\}$ is a function.
Edges in $E_T$ are oriented away from the root, but this is not reflected in our graphical representation. Examples of elements of $\T$ include the following
\[ \dtR<i>,\quad\dtI<ij>,\quad\dtV<ijk>,\quad\Forest{[i[j[k][l]][m]]}. \]

For $\tau\in\T $ write $|\tau|=\#N_T$ for its number of nodes. Also, given an edge $e=(x,y)\in E_T$ we set $s(e)=x$ and $t(e)=y$.
There is a natural partial order relation on $N_T$ where $x\le y$ if and only if there is a path in $T$ from the root to $y$ containing $x$.

We denote by $\F$ the collection of decorated rooted forests and we let $\HH\coloneq\HH_{\rm BCK}$ denote the vector space spanned by $\F$.
There is a natural commutative and associative product on $\F$, denoted by $\cdot$ and given by the disjoint union of forests, where the empty forest $\1$ acts as the unit.
Then, $\HH$ is the free commutative algebra over $\T$, with grading $|\tau_1\dotsm\tau_k|=|\tau_1|+\dotsb+|\tau_k|$. 
Given $i\in\{1,\dotsc,d\}$ and a forest $\tau=\tau_1\dotsm\tau_k$ we denote by $[\tau_1\dotsm\tau_k]_i$ the tree obtained by grafting each of the trees $\tau_1,\dotsc,\tau_k$ to a new root decorated by $i$, e.g.
\[ [\dtR<j>]_i = \dtI<ij>,\quad[\dtR<j>\dtR<k>]_i = \dtV<ijk>. \]

The decorated Butcher--Connes--Kreimer coproduct \cite{Connes1998, Gubinelli2010} is the unique algebra morphism $\Delta\colon\HH\to\HH\otimes\HH$ such that \[ \Delta[\tau]_i = [\tau]_i\otimes\1 + ({\id}\otimes[\cdot]_i)\Delta\tau. \]
This coproduct admits a representation in terms of \emph{cuts}.
An \emph{admissible cut} $C$ of a tree $T$ is a non-empty subset of $E_T$ such that any path from any vertex of the tree to the root contains at most one edge from $C$; we denote by $\A(T)$ the set of all admissible cuts of the tree $T$.
Any admissible cut $C$ containing $k$ edges maps a tree $T$ to a forest $C(T)=T_1\dotsm T_{k+1}$ obtained by removing each of the edges in $C$.
Observe that only one of the remaining trees $T_1,\dotsc,T_{k+1}$ contains the root of $T$, which we denote by $R^C(T)$; the forest formed by the other $k$ factors is denoted by $P^C(T)$.
This naturally induces a map on decorated trees by considering cuts of the underlying tree, and restriction of the decoration map to each of the rooted subtrees $T_1,\dotsc,T_{k+1}$.
Then,
\begin{equation}
  \Delta\tau=\tau\otimes\1+\1\otimes\tau+\sum_{C\in\A(\tau)}P^C(\tau)\otimes R^C(\tau).
  \label{eq:CKcuts}
\end{equation}
This, together with the counit map $\varepsilon\colon\F\to\R$ such that $\varepsilon(\tau)=1$ if and only if $\tau=\1$ endows $\F$ with a connected graded commutative non-cocommutative bialgebra structure, hence a Hopf algebra structure \cite{Manchon2008}.

As before we denote by $\HH ^*$ the linear dual of $\HH $ which is an algebra via the convolution product $\langle X\star Y,\tau\rangle=\langle X\otimes Y,\Delta\tau\rangle$ and we denote by $G$ the set of characters on $\HH $, that is, linear functionals $X\in\HH^*$ such that $\langle X,\sigma\cdot\tau\rangle=\langle X,\sigma\rangle\langle X,\tau\rangle$.
For each $n\in\N$ the finite-dimensional vector space $\HH_{n}$ spanned by the set $\cal F_n$ of forests with at most $n$ nodes is a subcoalgebra of $\HH $, hence its dual is an algebra under the convolution product, and we let $G_{n}$ be the set of characters on $\HH_{n}$.

We have already defined branched rough paths in \Cref{def:brp}.
\Cref{prp:genrphold} yields the following characterization
\begin{prp}
  A path $X\colon[0,1]^2\to G^N$ is a branched rough path if and only if $\bb X_t\coloneq X_{0t}$ is $\gamma$-H\"older path with respect to the
  distance $\rho_N$ defined in \eqref{eq:rho}.
\end{prp}

Directly applying \Cref{thm:charext} to the Butcher-Connes-Kreimer Hopf algebra $\HH$ we obtain
\begin{crl}
  Given $\gamma\in\,]0,1[$ with $\gamma^{-1}\notin\N$ and a family of $\gamma$-Hölder paths $(x^i:i=1,\dotsc,d)$, there exists a branched rough path $X$ above $(x^i:i=1,\dotsc,d)$, i.e. $X\colon[0,1]^2\to G^N$ is such that
  $\langle X_{st},\dtR<i>\rangle = x_t^i-x_s^i$ for all $i=1,\dotsc,d$.
  \label{crl:branchext}
\end{crl}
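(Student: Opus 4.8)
The plan is to obtain this corollary as a direct specialisation of \Cref{thm:charext} to the Butcher--Connes--Kreimer Hopf algebra $\HH=\HH_{\rm BCK}$. First I would check that the abstract hypotheses of \Cref{sse:RPthm} are met: as recalled in \Cref{sec:brp}, $\HH_{\rm BCK}$ is locally finite, graded by the number of nodes, and connected, and its degree-one component $\HH_{(1)}$ is spanned precisely by the single-node trees $\dtR<i>$, $i=1,\dots,d$; thus the integer $d=\dim\HH_{(1)}$ appearing in \Cref{dfn:genRP} is exactly the number of driving paths, and we may take $\{e_1,\dots,e_d\}=\{\dtR<1>,\dots,\dtR<d>\}$.

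Next I would run the induction of the corollary following \Cref{thm:charext}. On the abelian level one has $G^1\cong\HH_{(1)}^*$, so setting $\langle\bb X^1_t,\dtR<i>\rangle\coloneq x^i_t$ yields a path which is $\gamma$-Hölder with respect to $\rho_1$ because each $x^i$ is $\gamma$-Hölder. Since $\gamma^{-1}\notin\N$ we have $n\gamma<1$ for every $n\le N\coloneq\lfloor\gamma^{-1}\rfloor$, which is exactly the regime in which \Cref{thm:charext} applies, so one may extend successively to $\gamma$-Hölder paths $\bb X^2,\dots,\bb X^N$ with $\bb X^{n+1}|_{\HH_n}=\bb X^n$, and put $\bb X\coloneq\bb X^N:[0,1]\to(G^N,\rho_N)$.

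Finally I would translate back to the two-variable formulation: set $X_{st}\coloneq\bb X_s^{-1}\star\bb X_t$. By \Cref{prp:genrphold} this satisfies Chen's rule and the bound $|\langle X_{st},v\rangle|\lesssim|t-s|^{\gamma|v|}$ for all homogeneous $v$, hence $X$ is a branched $\gamma$-rough path with values in $G^N$ (and, invoking \Cref{thm:geomext}, it extends uniquely to one valued in the full character group $G$ with the analytic estimate on every forest, as in \Cref{def:brp}). To read off the underlying path, note that $\dtR<i>$ is homogeneous of degree $1$, so $\Delta\dtR<i>=\dtR<i>\otimes\1+\1\otimes\dtR<i>$, and using $\bb X_s^{-1}=\bb X_s\circ S$ together with $S\dtR<i>=-\dtR<i>$,
\[
\langle X_{st},\dtR<i>\rangle=\langle\bb X_s^{-1},\dtR<i>\rangle+\langle\bb X_t,\dtR<i>\rangle=x^i_t-x^i_s .
\]

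I do not anticipate a genuine obstacle: all the analytic work is already packaged in \Cref{thm:charext}, and this corollary is a routine translation of that theorem into the language of branched rough paths. The only point that warrants a word of care is verifying that $\HH_{\rm BCK}$ really satisfies the structural assumptions (local finiteness, grading, connectedness, and the matching of the alphabet $\{1,\dots,d\}$ with a basis of $\HH_{(1)}$), together with the observation that the condition $\gamma^{-1}\notin\N$ is precisely what keeps every level $1,\dots,N$ inside the range $n\gamma<1$ where the extension step is available.
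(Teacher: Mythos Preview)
Your proposal is correct and follows the same approach as the paper: the paper's proof is the one-line remark ``Directly applying \Cref{thm:charext} to the Butcher--Connes--Kreimer Hopf algebra $\HH$ we obtain \dots'', relying on the general corollary of \Cref{thm:charext} (which already handles the abelian base case $G^1\cong\HH_{(1)}^*$ and the successive extensions) together with \Cref{prp:genrphold} to pass to the two-variable picture. Your write-up simply spells out these steps in the BCK setting and adds the explicit computation of $\langle X_{st},\dtR<i>\rangle$ at degree one, which is consistent with but more detailed than the paper's treatment.
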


\begin{rmk}
  Given the level of generality in which \Cref{thm:charext} is developed, our results also apply to the case when $\HH$ is a combinatorial Hopf algebra as defined in \cite{Curry2018}.
  In particular, we also have a construction theorem for \emph{planarly branched rough paths} \cite{Curry2018} which are characters over Munthe-Kaas and Wright's Hopf algebra of Lie group integrators \cite{Munthe-Kaas2008}.
\end{rmk}

\subsection{Anisotropic geometric rough paths}
\label{sse:aRP}
We now apply our results to another class of rough paths which we call \emph{anisotropic geometric rough paths} (aGRPs for short).
L. Gyurkó introduced a similar concept in \cite{Gyurko2016}, which he called $\Pi$-rough paths; unlike us, he uses a ``primal'' presentation, i.e. paths taking values in the tensor algebra $T(\R^d)$, and $p$-variation norms rather than H\"older norms. Geometric rough paths over a inhomogeneous (or anisotropic) set of paths can be traced back to Lyons' original paper \cite{Lyons1998}.

As in the geometric case, see \Cref{sss:grp}, fix a finite alphabet $A=\{1,\dotsc,d\}$ and denote by $\mathrm M(A)$ the free monoid generated by $A$. We denote again by $\HH\coloneq T(A)$ the shuffle Hopf algebra over the alphabet $A$.

Let $(\gamma_a:a\in A)$ be a sequence of real numbers such that $0<\gamma_a<1$ for all $a$, and let $\hat{\gamma}=\min_{a\in A}\gamma_a$.
For a word $v=a_1\dotsm a_k\in\mathrm M(A)$ of length $k$ define
\[
  \omega(v)\coloneq{\gamma_{a_1}+\dotsc+\gamma_{a_k}}
\]
and observe that $\omega$ is additive in the sense that $\omega(uv)=\omega(u)+\omega(v)$ for each pair of words $u,v\in\mathrm M(A)$.
The set
\[
\f L\coloneq \{v\in\mathrm M(A):\omega(v)\le 1\}
\]
is finite; if $\hat N\coloneq\lfloor\hat{\gamma}^{-1}\rfloor$ then $\f L\subset \HH_{\hat N}$.
In analogy with \Cref{lmm:subc}, the additivity of $\omega$ implies
\begin{lmm}
  The subspace $\HH_{\mathrm a}\subset \HH_{\hat N}$ spanned by $\f L$ is a subcoalgebra of $(\HH,\bar{\Delta},\varepsilon)$.
\end{lmm}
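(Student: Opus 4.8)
The plan is to follow the argument of \Cref{lmm:subc}, with the standard grading replaced by the weight $\omega$. Recall that the coproduct on $T^c(A)$ is deconcatenation, so for a word $v = a_1\dotsm a_n$ we have
\[
\bar\Delta v = \sum_{k=0}^{n} a_1\dotsm a_k \otimes a_{k+1}\dotsm a_n .
\]
First I would fix $v\in\f L$ and consider an arbitrary term $u\otimes w$ appearing in $\bar\Delta v$, where $u = a_1\dotsm a_k$ and $w = a_{k+1}\dotsm a_n$ (allowing $u=\1$ or $w=\1$). Since $\omega$ is additive, $\omega(u)+\omega(w)=\omega(uw)=\omega(v)\le\hat{\gamma}^{-1}$; and since every $\gamma_a>0$, both $\omega(u)$ and $\omega(w)$ are non-negative, so $\omega(u)\le\hat{\gamma}^{-1}$ and $\omega(w)\le\hat{\gamma}^{-1}$, i.e.\ $u,w\in\f L$. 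Hence $\bar\Delta v\in\HH_{\mathrm a}(A)\otimes\HH_{\mathrm a}(A)$, and extending by linearity yields $\bar\Delta(\HH_{\mathrm a}(A))\subset\HH_{\mathrm a}(A)\otimes\HH_{\mathrm a}(A)$.

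It then remains to verify the remaining structure of a counital subcoalgebra: coassociativity of $\bar\Delta$ restricted to $\HH_{\mathrm a}(A)$ is inherited from $T^c(A)$, and $\omega(\1)=0\le\hat{\gamma}^{-1}$ shows $\1\in\f L\subset\HH_{\mathrm a}(A)$, so the restriction of $\varepsilon$ to $\HH_{\mathrm a}(A)$ is still a counit for $\bar\Delta$. This completes the argument.

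\textbf{Main obstacle.} There is essentially no obstacle here; the proof is a routine stability check. The only point that genuinely requires care is the positivity of the weights $\gamma_a$: it is exactly what prevents a short prefix from splitting off a long suffix, and it is where the standing hypothesis $0<\gamma_a<1$ is used. I would also note in passing that, unlike in \Cref{lmm:subc}, one should not expect $\HH_{\mathrm a}(A)$ to be a subalgebra for the shuffle product, since a shuffle of two $\omega$-admissible words need not be $\omega$-admissible; but only the coalgebra structure is needed in what follows.
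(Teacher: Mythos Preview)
Your proposal is correct and follows precisely the approach the paper indicates: the paper simply remarks that ``in analogy with \Cref{lmm:subc}, the additivity of $\omega$ implies'' the result, and your argument is exactly the natural unpacking of that sentence. Your additional observations about positivity of the $\gamma_a$ and the failure of $\HH_{\mathrm a}(A)$ to be a shuffle subalgebra are accurate but not needed for the lemma as stated.
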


Consequently, we will consider the dual algebra $(\HH_{\mathrm a}^*,\star,\varepsilon)$. In this case, we \emph{define} $\g_{\mathrm a}$ to be the space of truncated infinitesimal characters on $\HH_{\mathrm a}$, namely
the linear functionals $\alpha\in \HH_{\mathrm a}^*$ such that
\[
\langle\alpha, x\shuffle y\rangle=\langle\alpha,x\rangle\langle\varepsilon,y\rangle+\langle\varepsilon,x\rangle\langle\alpha,y\rangle
\]
for all $x,y\in\HH_{\mathrm a}$ such that $x\shuffle y\in\HH_{\mathrm a}$,
 and let $G_{\mathrm a}\coloneq\{X=\left.\exp_{\hat N}(\alpha)\right|_{\HH_{\mathrm a}}: \, \alpha\in\g_{\mathrm a}\}$.
As before, there is a canonical injection $\HH_{\mathrm a}^*\hookrightarrow \HH^*$ so we suppose that $\langle X,v\rangle=0$ for all $X\in \HH^*$ and $v\not\in\f L$.

For each $\lambda>0$ there is a unique coalgebra automorphism $\Omega_\lambda\colon\HH\to \HH$ such that $\Omega_\lambda a=\lambda^{\gamma_a/\hat{\gamma}}a$ for all $a\in A$. We also define $\|\cdot\|\colon G_{\mathrm a}\to\R$,
\begin{equation}
  \|X\|\coloneq\max_{v\in\f L} \, |\langle X,v\rangle|^{\hat{\gamma}/\omega(v)}.
 \label{eq:anorm}
\end{equation}
As at the end of \Cref{sec:2}, $(\Omega_\lambda)_{\lambda>0}$ is a one-parameter family of Lie-algebra automorphisms of $\g_{\mathrm a}$ and
$\|\Omega_\lambda X\|=\lambda\|X\|$ for all $\lambda>0$ and $X\in G_{\mathrm a}$, namely $\|\cdot\|$ is a homogeneous norm on $G_{\mathrm a}$. However, unlike $\lrn{\cdot}$ this norm is not subadditive and it therefore does not define a distance on $G_{\mathrm a}$.

\subsubsection{Signatures}
In order to construct an appropriate metric on $G_{\mathrm a}$ we consider \emph{signatures} of smooth paths. We observe that $A\subset\f L$.
Let $x=(x^a:a\in A)$ be a collection of (piecewise) smooth paths, and define a map $S(x)\colon[0,1]^2\to \HH^*$ by
\[ \langle S(x)_{st},v\rangle\coloneq \int_s^t \mathrm{d}x^{v_k}_{s_k}\int_{s}^{s_k}\mathrm{d}x^{v_{k-1}}_{s_{k-1}}\dotsm\int_s^{s_2}\mathrm{d}x^{v_1}_{s_1}. \]
In his seminal work \cite{Chen1977}, K. T. Chen showed that $S(x)$ is a character of $(T(A),\shuffle)$; in particular, $\left.S(x)_{st}\right|_{\HH_{\mathrm a}}\in G_{\mathrm a}$.

Consider the metric $d_{\mathrm a}(X,Y)=\sum_{a\in A}|\langle X-Y,a\rangle|^{\hat{\gamma}/\gamma_a}$ on $\HH_{(1)}^*$, where we recall that $\HH_{(1)}$ is the vector space spanned by $A$.
The \emph{anisotropic length} of a smooth curve $\theta\colon[0,1]\to \HH_1^*$ is defined to be its length with respect to this metric and will be denoted by $L_{\mathrm a}(\theta)$.
Observe that since $d_{\mathrm a}(\Omega_\lambda X,\Omega_\lambda Y)=\lambda d_{\mathrm a}(X,Y)$ we have that $L_{\mathrm a}(\Omega_\lambda\theta)=\lambda L_{\mathrm a}(\theta)$.

We now define a homogeneous norm (see the end of \Cref{sec:2}) $|\cdot|_{\rm CC}\colon G_{\mathrm a}\to\R_+$, called the \emph{anisotropic Carnot--Carath\'eodory norm}, by setting \[ |X|_{\rm CC}\coloneq \inf\{L_{\mathrm a}(x): x^a\in C^\infty, S(x)_{01}=X\}. \]
Since curve length is invariant under reparametrization in any metric space we obtain, as in \cite[Section 7.5.4]{Friz2010}:
\begin{prp}
  The infimum defining the anisotropic Carnot--Carath\'eodory norm is finite and attained at some minimizing path $\hat{x}$.
\end{prp}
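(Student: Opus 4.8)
The plan is to run the classical existence-of-geodesics argument for Carnot--Carath\'eodory metrics, as in \cite[Section 7.5.4]{Friz2010}, in two parts: first a controllability step showing the infimum is over a non-empty set (hence finite), then a compactness step producing a minimizer. For finiteness, fix $X\in G_{\mathrm a}$ and recall that $G_{\mathrm a}=\exp(\g_{\mathrm a})$ with $\g_{\mathrm a}$ generated as a Lie algebra by its degree-one part $\mathrm{span}\{e_a:a\in A\}$, this being \eqref{WW} transported to the anisotropic grading. As $G_{\mathrm a}$ is a connected Lie group, it is generated by the one-parameter subgroups $\{\exp(te_a):t\in\R,\,a\in A\}$, so $X=\exp(t_1e_{a_1})\star\dotsm\star\exp(t_me_{a_m})$ for suitable $t_i,a_i$. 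Since a segment moving by $t$ in the single coordinate $a$ has signature $\exp(te_a)$ and the signature is multiplicative under concatenation (Chen), this exhibits a piecewise linear $x$ with $S(x)_{0,1}=X$; being a finite concatenation of single-coordinate segments it has finite anisotropic length, so $\lrn{X}$ is a finite non-negative number.

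For attainment I would pass to a compact minimizing family. Both $L_{\mathrm a}$ and $x\mapsto S(x)_{0,1}$ are invariant under reparametrization, and the infimum over smooth competitors agrees with the one over Lipschitz competitors (a routine density argument: mollify, then append a short loop to restore the signature exactly). I may therefore pick a minimizing sequence $(x^{(n)})_n$ of Lipschitz paths, each parametrized at constant anisotropic speed and normalised so that $x^{(n)}(0)$ is a fixed point, which does not affect the signature. The Euclidean Lipschitz constant of $x^{(n)}$ is then controlled by $L_{\mathrm a}(x^{(n)})$, hence uniformly bounded after discarding finitely many terms, so the family is equibounded and equi-Lipschitz; by Arzel\`a--Ascoli a subsequence converges uniformly to a Lipschitz path $\hat{x}$.

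It remains to check that $\hat{x}$ is a minimizer. Lower semicontinuity of length under uniform convergence gives $L_{\mathrm a}(\hat{x})\le\liminf_nL_{\mathrm a}(x^{(n)})=\lrn{X}$. On the other hand the truncated signature $x\mapsto S(x)_{0,1}$ involves only iterated integrals of order at most $N_{\mathrm a}$, and these depend continuously on uniformly convergent paths of uniformly bounded variation, the variation bound being supplied by the constant-speed normalisation; hence $S(\hat{x})_{0,1}=\lim_nS(x^{(n)})_{0,1}=X$. Thus $\hat{x}$ is admissible with $L_{\mathrm a}(\hat{x})\le\lrn{X}$, forcing equality. I expect the main obstacle to be this last step together with the preceding reduction from smooth to Lipschitz competitors: one must simultaneously keep the minimizing sequence inside a class where Arzel\`a--Ascoli applies, whence the constant-speed and uniformly-bounded-variation normalisation, and ensure that the truncated signature is continuous along it; by contrast the controllability input in the first part is immediate from the bracket-generating property \eqref{WW}.
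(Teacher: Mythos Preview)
The paper does not actually give a proof of this proposition: it simply notes that curve length is invariant under reparametrization in any metric space and defers to \cite[Section 7.5.4]{Friz2010}. Your proposal is precisely an unpacking of that reference---the standard Carnot--Carath\'eodory controllability-plus-compactness argument (Chow connectivity, constant-speed reparametrization, Arzel\`a--Ascoli, lower semicontinuity of length, continuity of the truncated signature)---so the approaches are the same.

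One small remark on execution: in the anisotropic setting the metric $d_{\mathrm a}(X,Y)=\sum_{a}|X_a-Y_a|^{\hat\gamma/\gamma_a}$ has exponents $\le 1$, so your sentence ``the Euclidean Lipschitz constant of $x^{(n)}$ is controlled by $L_{\mathrm a}(x^{(n)})$'' is not literally a Lipschitz bound; what the constant-speed parametrization gives is $|x^a_t-x^a_s|\le (c|t-s|)^{\gamma_a/\hat\gamma}$, which is still more than enough for equicontinuity and for uniformly bounded variation component-wise, so the Arzel\`a--Ascoli and signature-continuity steps go through as you say.
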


\begin{prp}
  The anisotropic Carnot--Carath\'eodory norm is homogeneous, that is, $|\Omega_\lambda X|_{\rm CC}=\lambda|X|_{\rm CC}$.
\end{prp}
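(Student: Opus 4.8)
The plan is to show that the dilation $\Omega_\lambda$ intertwines the signature map with a corresponding dilation of the underlying smooth paths, and then to push the already-established scaling $L_{\mathrm a}(\Omega_\lambda\theta)=\lambda L_{\mathrm a}(\theta)$ through the infimum defining $\lrn{\cdot}$.

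First I would make the action of $\Omega_\lambda$ on words explicit. Since $T^c(A)$ is cofree, $\Omega_\lambda$ is determined by its projection onto the span of $A$, namely $a\mapsto\lambda^{\gamma_a/\hat\gamma}a$; on the other hand the map $v=v_1\cdots v_k\mapsto\lambda^{\omega(v)}v$ is a coalgebra automorphism (because deconcatenation splits $\omega$ additively, $\omega(v_{(1)})+\omega(v_{(2)})=\omega(v)$) with the same projection, so by uniqueness $\Omega_\lambda v=\lambda^{\omega(v)}v$, hence $\langle\Omega_\lambda X,v\rangle=\lambda^{\omega(v)}\langle X,v\rangle$ for $X\in\HH_{\mathrm a}^*$ and $v\in\f L$.

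Next, to a collection $x=(x^a)_{a\in A}$ of (piecewise) smooth paths associate the dilated collection $\Omega_\lambda x\coloneq(\lambda^{\gamma_a/\hat\gamma}x^a)_{a\in A}$, which is again smooth, and note that the curve in $\HH_1^*$ it traces out is exactly $\Omega_\lambda\theta$, where $\theta$ is the curve traced by $x$; hence $L_{\mathrm a}(\Omega_\lambda x)=\lambda L_{\mathrm a}(x)$ by the scaling of $L_{\mathrm a}$ already observed. Extracting the constant $\lambda^{\gamma_{v_j}/\hat\gamma}$ from each of the $k$ one-forms in the iterated integral defining $\langle S(\Omega_\lambda x)_{st},v\rangle$ gives $\lambda^{\sum_j\gamma_{v_j}/\hat\gamma}\langle S(x)_{st},v\rangle=\lambda^{\omega(v)}\langle S(x)_{st},v\rangle$, that is, $S(\Omega_\lambda x)_{0,1}=\Omega_\lambda S(x)_{0,1}$.

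Finally, since $\Omega_\lambda$ is invertible with inverse $\Omega_{1/\lambda}$ and $x\mapsto\Omega_\lambda x$ is a bijection of the set of smooth collections, the intertwining identity shows that $x\mapsto\Omega_\lambda x$ restricts to a bijection between $\{x:S(x)_{0,1}=X\}$ and $\{x:S(x)_{0,1}=\Omega_\lambda X\}$, along which $L_{\mathrm a}$ is multiplied by $\lambda$; taking the infimum over each side yields $\lrn{\Omega_\lambda X}=\lambda\lrn{X}$. The step most deserving of care is the verification $S(\Omega_\lambda x)=\Omega_\lambda S(x)$ together with the identification of the auxiliary curve of $\Omega_\lambda x$ with $\Omega_\lambda\theta$; both amount to the bookkeeping observation that the weight $\omega$ is additive under deconcatenation (used for the explicit form of $\Omega_\lambda$) and behaves multiplicatively on iterated integrals (used for the scalar extraction), so no genuine obstacle is anticipated.
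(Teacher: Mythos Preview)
Your proof is correct and follows essentially the same approach as the paper: both establish the intertwining identity $S(\Omega_\lambda x)_{0,1}=\Omega_\lambda S(x)_{0,1}$ via the explicit formula $\Omega_\lambda v=\lambda^{\omega(v)}v$ and then combine it with the already-known scaling $L_{\mathrm a}(\Omega_\lambda x)=\lambda L_{\mathrm a}(x)$. The only stylistic difference is that the paper picks the minimizing curve $\hat x$ (whose existence was just proved) to obtain $\lrn{\Omega_\lambda X}\le\lambda\lrn{X}$ and then recovers the reverse inequality from $X=\Omega_{\lambda^{-1}}\Omega_\lambda X$, whereas you argue via the bijection of competitor sets and get both inequalities at once without invoking the minimizer; this is a harmless variation.
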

\begin{proof}
  Let $\hat{x}$ be the curve such that $|X|_{\rm CC}=L_a(\hat{x})$. For any $\lambda>0$ and word $v\in\f L$ we have
  \begin{align*}
    \langle S(\Omega_\lambda\hat{x})_{01},v\rangle &= \lambda^{\omega(v)/\hat\gamma}\langle S(\hat{x})_{01},v\rangle
    = \langle \Omega_\lambda S(\hat{x})_{01}, v\rangle
    = \langle \Omega_\lambda X,v\rangle,
  \end{align*}
  thus $|\Omega_\lambda X|_{\rm CC}\le L_{\mathrm a}(\Omega_\lambda\hat{x})=\lambda L_{\mathrm a}(\hat{x})=\lambda|X|_{\rm CC}$.
  The reverse inequality is obtained by noting that $X=(\Omega_{\lambda^{-1}}\circ\Omega_\lambda) X$.
\end{proof}

The anisotropic Carnot--Carath\'eodory norm can also be seen to satisfy $|X|_{\rm CC}=|X^{-1}|_{\rm CC}$ and
$|X\star Y|_{\rm CC}\leq |X|_{\rm CC}+|Y|_{\rm CC}$ for all $X,Y\in G_{\mathrm a}$, see e.g. the proof of \cite[Proposition 7.40]{Friz2010};
hence it induces a left-invariant metric $\rho_{\mathrm a}(X,Y)\coloneq|X^{-1}\star Y|_{\rm CC}$ on $G_{\mathrm a}$. 
Moreover, arguing
as in the proof of \cite[Theorem 7.44]{Friz2010} we see that there exist positive constants $c,C$ such that 
\begin{equation}\label{eq:eq} 
c|X|_{\rm CC}\leq \| X\|\leq C|X|_{\rm CC}, \qquad \forall \ X\in G_{\mathrm a}.
\end{equation}

\begin{dfn}\label{def:ageo}
  An \emph{anisotropic geometric $\gamma$-rough path}, with $\gamma=(\gamma_a, \, a\in A)$, is a map $X\colon[0,1]^2\to G_{\mathrm a}$ which satisfies
  \begin{enumerate}
  \item the Chen rule $X_{su}\star X_{ut}=X_{st}$ for all $(s,u,t)\in[0,1]^3$,
   \item the bound $|\langle X_{st},v\rangle|\lesssim|t-s|^{\omega(v)}$ for all $v\in\f L$.
   \end{enumerate}
\end{dfn}

\begin{prp}
  Anisotropic geometric $\gamma$-rough paths are in one-to-one correspondence with $\hat{\gamma}$-Hölder paths $\bb{X}\colon[0,1]\to (G_{\mathrm a}, \rho_{\mathrm a})$ with $\bb X_0=\1$.
\end{prp}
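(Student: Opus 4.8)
The plan is to argue exactly as in the proof of \Cref{prp:genrphold}, the only genuinely new point being that the metric $\nu$ on $G_{\mathrm a}$ is built from the anisotropic Carnot--Carath\'eodory norm $\lrn{\cdot}$ rather than from a norm that already appears in the analytic estimate, so one additional comparison of homogeneous norms enters.

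First I would set up the correspondence at the level of sets. To an anisotropic geometric $\gamma$-rough path $X:[0,1]^2\to G_{\mathrm a}$ associate $\bb X_t\coloneq X_{0t}$, and to a path $\bb X:[0,1]\to G_{\mathrm a}$ with $\bb X_0=\varepsilon$ associate $X_{st}\coloneq\bb X_s^{-1}\star\bb X_t$. As in \Cref{rmk:Chen}, Chen's rule together with $X_{tt}=\varepsilon$ forces $X_{st}=(X_{0s})^{-1}\star X_{0t}=\bb X_s^{-1}\star\bb X_t$, so the two assignments are mutually inverse; and associativity of $\star$ shows that the two-variable function produced from any path automatically satisfies Chen's rule and $X_{tt}=\varepsilon$. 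Hence the only thing left to prove is that condition (3) of \Cref{def:ageo} for $X$ is equivalent to $\hat\gamma$-Hölder continuity of $\bb X$ with respect to $\nu$.

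For this, the key observation is that \eqref{eq:anorm}, namely $|X|=\max_{v\in\f L}(\ell(v)!\,|\langle X,v\rangle|)^{1/\omega(v)}$, is a homogeneous norm on $G_{\mathrm a}$, viewed as a homogeneous group for the dilations $(\Omega_\lambda)_{\lambda>0}$: it is symmetric, since the antipode of $T^c(A)$ reverses words and word reversal preserves both $\ell$ and $\omega$, and it satisfies $|\Omega_\lambda X|=\lambda|X|$ because $\langle\Omega_\lambda X,v\rangle=\lambda^{\omega(v)}\langle X,v\rangle$. By \Cref{crl:normequiv} it is therefore equivalent to the sub-additive homogeneous norm $\lrn{\cdot}$ that defines $\nu$. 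Using $\bb X_t^{-1}\star\bb X_s=X_{ts}$ and the symmetry of $\lrn{\cdot}$, this gives, for all $s,t\in[0,1]$,
\[
\nu(\bb X_s,\bb X_t)=\lrn{\bb X_t^{-1}\star\bb X_s}=\lrn{X_{st}}\asymp|X_{st}|.
\]

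It then remains to unravel the definition of $|\cdot|$. Since $\f L$ is finite and the exponents $\omega(v)$ stay bounded, the bound $|X_{st}|\lesssim|t-s|^{\hat\gamma}$ holds if and only if $(\ell(v)!\,|\langle X_{st},v\rangle|)^{1/\omega(v)}\lesssim|t-s|^{\hat\gamma}$ for every $v\in\f L$, which after raising to the power $\omega(v)$ and absorbing the finitely many constants $\ell(v)!$ is exactly condition (3): $|\langle X_{st},v\rangle|\lesssim|t-s|^{\hat\gamma\omega(v)}$ for all $v\in\f L$. Together with the displayed comparison this establishes the claimed bijection. I do not expect a serious obstacle; the one thing requiring care is precisely that one must route the argument through \Cref{crl:normequiv}, because, unlike in \Cref{prp:genrphold} where the metric-defining norm \eqref{eq:homnorm} also controls the coordinates directly, here the coordinates are controlled by \eqref{eq:anorm} while $\nu$ comes from $\lrn{\cdot}$.
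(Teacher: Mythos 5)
Your proof is correct and follows essentially the same route as the paper's: reduce to the equivalence of $\hat\gamma$-Hölder continuity with respect to $\nu$ and the pointwise bounds in condition (3), and pass through the equivalence of the homogeneous norms $|\cdot|$ from \eqref{eq:anorm} and $\lrn{\cdot}$. You are more explicit than the paper in spelling out the set-level bijection and in justifying the norm equivalence via \Cref{crl:normequiv}, but the underlying argument is the same.
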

\begin{proof}
  Let $X$ be an anisotropic geometric $\gamma$-rough path and $v$ a word. By definition we have that $|\langle X_{st},v\rangle|\lesssim|t-s|^{\omega(v)}$, hence $\|X_{st}\|\lesssim|t-s|^{\hat{\gamma}}$.
  The equivalence between $\|\cdot\|$ and $|\cdot|_{\rm CC}$ of \eqref{eq:eq}  implies that $\rho_{\mathrm a}(\bb{X}_s,\bb{X}_t)=|X_{st}|_{\rm CC}\lesssim|t-s|^{\hat{\gamma}}$, hence $t\mapsto \bb{X}_t$ is $\hat\gamma$-Hölder with respect to $\rho_{\mathrm a}$.
  The other direction follows in a similar manner.
\end{proof}

\Cref{thm:charext} also applies to this situation, and we obtain the following
\begin{crl}
  Let $(\gamma_a:a\in A)$ be real numbers in $]0,1[$ such that $1\not\in\sum_{a\in A}\gamma_a\N$.
  Let $(x^a:a\in A)$ be a collection of real-valued paths such that $x^a$ is $\gamma_a$-Hölder. Then
 there exists an anisotropic geometric $\gamma$-rough path $X$ such that $\langle X_{st},a\rangle=x^a_t-x^a_s$ for all $a\in A$.
  \label{crl:agrpext}
\end{crl}
\begin{proof}We start by constructing the \emph{homogeneous} geometric rough path $X$ given by the $\hat\gamma$-H\"older path $\bb X\colon[0,1]\to G^{\hat N}$ of
\Cref{crl:3.8}. Then we restrict $\bb X$ to $\HH_{\mathrm a}\subset\HH_{\hat N}$ and we show that on this space it satisfies the stronger bound $|\langle X_{st},v\rangle|\lesssim|t-s|^{\omega(v)}$ for all $v\in\f L$.

Recalling the proof of Theorem \Cref{thm:charext}, we consider $v\in\HH_{n}\cap \HH_{\mathrm a}$, and we proceed by recurrence on $n$. 
For $n=0$ there is nothing to prove. Suppose we have proved the result for $n$ and let  $v\in\HH_{n+1}\cap \HH_{\mathrm a}$. In this case
\[
\begin{split}
\langle X^{n+1}_{st},v\rangle &= \langle \exp_{n+1}(L_{st}+Z_{st}),v\rangle = 
\sum_{i=0}^{n+1}\frac1{i!} \langle (L_{st})^{i\star}\star(\varepsilon+Z_{st}),v\rangle
\\ &=\sum_{i=0}^{n+1}\frac1{i!} \langle (L_{st})^{i\star},v\rangle + \langle Z_{st},v\rangle
= \langle X^{n}_{st},v\rangle + \frac1{(n+1)!} \langle (L_{st})^{(n+1)\star},v\rangle + \langle Z_{st},v\rangle.
\end{split}
\]
We want to prove now that 
\begin{equation}\label{eq:esti}
\left|\langle X^{n+1}_{t^m_kt^m_{k+1}},v\rangle\right| \lesssim 2^{-m\omega(v)}, \qquad \forall \, m\geq 0,
\ k=0,\ldots,2^m-1.
\end{equation}
 For $m\geq 0$ set
 \[ b_m\coloneq 2^{m\omega(v)}\max_{k=0,\dotsc,2^m-1}\left|\langle Z^m_{t^m_kt^m_{k+1}},v\rangle\right|. \]
  Then, for $s=t^m_k$, $u=t^m_{k+1}$ and $t=t^m_{k+2}$ and $v=v_1\cdots v_{n+1}$
  \begin{equation*}
    |\langle\BCH_{(n+1)}(L_{su},L_{ut}),v\rangle| \le \sum_{i+j=n+1}\frac{1}{i!j!}\!\sum_{\sigma\in S_{n+1}}\!\!|a_\sigma|\prod_{p=1}^i|\langle L_{su},v_{\sigma^{-1}(p)}\rangle|\prod_{q=i+1}^{n+1}\!\!|\langle L_{ut},v_{\sigma^{-1}(q)}\rangle|.
  \end{equation*}
Now, since $v_j\in\HH_{(1)}$ for all $j=1,\dotsc,n+1$ we actually have that by the assumption $x^a\in C^{\gamma_a}$
  \[ |\langle L_{su},a\rangle|= \left|x^a_u-x^a_{s}\right|\lesssim 2^{-m\gamma_a} \]
  and we have a similar estimate for $L_{ut}$ instead of $L_{su}$.
  Therefore we obtain that
  \[ \left|\langle\BCH_{(n+1)}(L_{su},L_{ut}),v\rangle\right| \lesssim 2^{-m\omega(v)}. \]
  Therefore, from \eqref{eq:Zdef0} we get
  \begin{equation*}
    b_{m}\le 2^{m(\omega(v)-1)}b_{m-1}+C, \qquad m\geq 1,
  \end{equation*}
  hence since $b_0=0$ we can show by recurrence on $m\geq 0$
  \[ b_m\le C\sum_{j=0}^{m-1}2^{-j(1-\omega(v))}. \]
  Since we are in the regime where $\omega(v)<1$ (here we use that $1\not\in\sum_{a\in A}\gamma_a\N$) we obtain that 
  \[
   \sup_{m\ge 0}b_m \le\frac{C}{1-2^{\omega(v)-1}}.
  \]
  Therefore
  \[
  \left|\langle Z^m_{t^m_kt^m_{k+1}},v\rangle\right| \lesssim 2^{-m\omega(v)}, \qquad {m\geq 0}, \ {k=0,\dotsc,2^m-1}.
  \]
Analogously, since $L_{st}\in\g$, arguing as in \eqref{eq:starsta} we have
\[
\langle (L_{st})^{(n+1)\star},v\rangle = \prod_{i=1}^{n+1} \langle L_{st},v_i\rangle
= \prod_{i=1}^{n+1} \left( x^{v_i}_{t}-x^{v_i}_{s}\right)
\Longrightarrow\left|\langle (L_{st})^{(n+1)\star},v\rangle\right| \lesssim 2^{-m\omega(v)}
\]
and \eqref{eq:esti} is proved. This implies that $\|X^{n+1}_{t^m_kt^m_{k+1}}\|\lesssim 2^{-m\hat\gamma}$
and by equivalence of homogeneous norms \eqref{eq:eq} we obtain 
 \[
  \rho_{\mathrm a}\left(\bb X^{n+1}_{t^m_k},\bb X^{n+1}_{t^m_{k+1}}\right)\lesssim 2^{-m\hat\gamma}.
  \]
Then we can use \Cref{lmm:LV} and obtain that the path $\bb X^{n+1}$ constructed
in the proof of \Cref{thm:charext} is in fact $\hat\gamma$-H\"older path with
values in $G_{\mathrm a}$.
\end{proof}

\section{The Hairer-Kelly construction}
\label{sse:brp+}
In this section we develop further results specifically for branched rough paths as introduced in \Cref{sec:brp} by using our general results from \Cref{sse:RPthm}.
We analyze in detail the Hairer-Kelly map introduced in \cite{Hairer2014}, which plays a very important role in our construction, and we use it to prove \Cref{thm:intro.trans} and \Cref{crl:param}.

\subsection{The Hairer--Kelly map}
\label{sss:RPHK}
Recall that $\T$ denotes the set of all decorated rooted trees, $\F$ denotes the collection of all decorated rooted forests, and $\HH_{\mathrm{BCK}}$ is the Butcher--Connes--Kreimer Hopf algebra. As in \Cref{sec:brp}, $\Delta$ denotes the Connes--Kreimer coproduct on $\HH_{\mathrm{BCK}}.$
For each $n\in\N$, $n\geq 1$, we denote by $\T_n$ the set of (non-empty) trees with at most $n$ vertices. 

Recall also from \Cref{sss:grp} that given an alphabet $A$ we denote by $T(A)$ the shuffle Hopf algebra generated by $A$, and that $\overline \Delta$ denotes the deconcatenation coproduct on it. We fix $N\in\N$ and we consider the shuffle Hopf algebras $T(\T)$ and $T(\T_N)$, 
namely we choose as letters of our alphabet the (non-empty) decorated rooted trees (respectively rooted trees with with at most $N$ vertices). 
Note that we can identify every non-empty tree $\tau\in\T$ with the
word in $T(\T)$ composed by the single letter $\tau$.
We also remark that, in order to avoid confusion with the forest product on $\HH_{\mathrm{BCK}}$ we denote the concatenation of letters in $T(\T)$ by a tensor symbol. 

We note that $T(\T)$ and $T(\T_N)$ admit two different natural gradings, both of which make them locally finite graded Hopf-algebras. One grading,
as in \Cref{sss:grp}, is given by the number of letters (trees) of each word, namely the degree of $v=\tau_1\otimes\dotsm\otimes\tau_k$ is $k$. The
other grading is given by the sum of the number of nodes of each letter (tree), namely the degree of $v=\tau_1\otimes\dotsm\otimes\tau_k$ is 
$|\tau_1|+\dotsb+|\tau_k|$, where we recall that forests and trees are graded in $\HH_{\mathrm{BCK}}$ by the number of nodes, with the notation $|\tau|=\#N_\tau$. We remark the latter grading is always greater or equal to the former.  As an example, take $v= \dtR<i>\otimes\dtI<jk>$; then, as a word $v$ has length 2 but the total number of nodes is 3.

We recall the following result from \cite[Lemma 4.9]{Hairer2014}.
\begin{lmm}
We grade $T(\T)$ according to the number of nodes. Then there exists a graded morphism of Hopf algebras $\psi\colon\HH_{\mathrm{BCK}}\to T(\T)$ satisfying $\psi(\tau)=\tau+\psi_{n-1}(\tau)$ for all $\tau\in\T_n$, where $\psi_{n-1}$ denotes the projection of $\psi$ onto $T(\T_{n-1})$. 
  \label{lmm:HK}
\end{lmm}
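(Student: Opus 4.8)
The plan is to construct $\psi$ by induction on the number of nodes, exploiting two structural features of the Hopf algebras at hand. First, $\HH$ is the free commutative algebra on the set $\T$ of trees, so an algebra morphism $\psi\colon\HH\to T^c(\T)$ is the same thing as an arbitrary assignment $\tau\mapsto\psi(\tau)\in T^c(\T)$ for $\tau\in\T$, extended by $\psi(\tau_1\cdots\tau_k)=\psi(\tau_1)\shuffle\cdots\shuffle\psi(\tau_k)$. Second, $T^c(\T)$ is the cofree conilpotent coalgebra on $\B$, which is what will let us solve the comultiplicativity constraint degree by degree. At the first level we set $\psi(\tau):=\tau$ (the length-one word) for every single decorated node $\tau$; since $\Delta\tau=\tau\otimes\1+\1\otimes\tau$ matches $\bar\Delta\tau$, this is compatible with the coproducts. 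Assume now $\psi$ has been defined on all trees with strictly fewer than $n$ nodes, in such a way that its multiplicative extension satisfies $\bar\Delta\psi(x)=(\psi\otimes\psi)\Delta x$ for every forest $x$ with $|x|\le n-1$; this propagation from trees to forests is immediate from multiplicativity of $\psi$ together with the fact that $\Delta$ and $\bar\Delta$ are algebra morphisms. Let $\tau\in\T$ with $|\tau|=n$. Writing $\Delta\tau=\tau\otimes\1+\1\otimes\tau+\Delta'\tau$, every forest and tree occurring in $\Delta'\tau$ has at most $n-1$ nodes, so
\[
  P:=(\psi\otimes\psi)\Delta'\tau
\]
is already determined; comparing with $\bar\Delta\psi(\tau)=\psi(\tau)\otimes\1+\1\otimes\psi(\tau)+\bar\Delta'\psi(\tau)$, the requirement $\bar\Delta\psi(\tau)=(\psi\otimes\psi)\Delta\tau$ becomes exactly $\bar\Delta'\psi(\tau)=P$.

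The next step is to solve this equation. First, $P$ is a coassociative cocycle: $(\bar\Delta'\otimes\id)P=(\id\otimes\bar\Delta')P$. Indeed, since $\psi$ is a coalgebra morphism on forests of degree $\le n-1$, each side equals $\psi^{\otimes3}$ applied to $(\Delta'\otimes\id)\Delta'\tau=(\id\otimes\Delta')\Delta'\tau$, the last identity being coassociativity of the reduced coproduct. Now cofreeness of $T^c(\T)$ enters: in the deconcatenation coalgebra, the kernel of $\bar\Delta'$ on the augmentation ideal consists exactly of the letters, and every coassociative cocycle lies in the image of $\bar\Delta'$. Hence there is a unique $w\in T^c(\T)$ with no length-one component such that $\bar\Delta'w=P$. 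By the grading, $w$ is homogeneous of degree $n$; being a sum of words of length $\ge2$ whose letters have total node-count $n$, each of those letters has at most $n-1$ nodes, so in fact $w\in T^c(\T_{n-1})$. We then set $\psi(\tau):=\tau+w$, which is homogeneous of degree $n$ with length-one part $\tau$ and $T^c(\T_{n-1})$-part $w=\psi_{n-1}(\tau)$, as required. Extending multiplicatively to forests of degree $n$ completes the induction.

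Finally one checks that the resulting $\psi\colon\HH\to T^c(\T)$ is a morphism of Hopf algebras: it is unital and multiplicative by construction, graded because each $\psi(\tau)$ is homogeneous of degree $|\tau|$, counit-compatible since both algebras are graded connected, and it satisfies $\bar\Delta\psi=(\psi\otimes\psi)\Delta$ because this holds on all trees by construction and $\{x\in\HH:\bar\Delta\psi(x)=(\psi\otimes\psi)\Delta x\}$ is a unital subalgebra of $\HH$; antipode-compatibility is then automatic. The main obstacle is the solvability step — showing that the cocycle $P$ really lies in the image of the reduced deconcatenation coproduct — which is precisely where cofreeness of $T^c(\T)$ is used. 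I would either invoke the universal property of cofree conilpotent coalgebras directly, obtaining $\psi$ as the unique coalgebra morphism whose projection onto $\B$ sends a forest to its tree component and then verifying multiplicativity via the uniqueness clause (comparing $\psi\circ m_{\HH}$ with $\shuffle\circ(\psi\otimes\psi)$), or reconstruct $w$ from $P$ explicitly by iterating the ``leftmost letter'' part of $\bar\Delta'$, the consistency of this reconstruction being exactly the cocycle identity.
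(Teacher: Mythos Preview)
Your argument is correct. Note, however, that the paper does not actually prove this lemma: it is recalled from \cite[Lemma~4.9]{Hairer2014}, and the only thing the paper extracts from that proof is the explicit recursion $\psi_{n-1}=(\psi\otimes\id)\Delta'$ on $\B_n$, i.e.\ $\psi(\tau)=\tau+\sum_{C\in\mathfrak A(\tau)}\psi(P^C(\tau))\otimes R^C(\tau)$ with $\otimes$ denoting concatenation in $T^c(\T)$ and $R^C(\tau)$ read as a single letter.

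Your route is genuinely different and more conceptual: instead of writing down this formula and verifying the bialgebra axioms, you \emph{solve} the comultiplicativity constraint $\bar\Delta'\psi(\tau)=(\psi\otimes\psi)\Delta'\tau$ at each degree using the acyclicity of the cofree conilpotent coalgebra (every coassociative cocycle in $T^c(\T)^+\otimes T^c(\T)^+$ is $\bar\Delta'$ of something, unique modulo letters). This explains \emph{why} such a $\psi$ exists and shows uniqueness of its higher-length part, but does not immediately hand you the closed recursion above; the Hairer--Kelly formula, by contrast, is exactly what the paper needs for its later combinatorial description of $\psi$ via ordered partitions (\Cref{prp:HKpart}), whose proof begins with ``By definition $\psi(\tau)=\sum_{C}\psi(P^C(\tau))\otimes R^C(\tau)$''. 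The two constructions of course agree, since your $w$ is the unique length-$\ge 2$ element with $\bar\Delta'w=(\psi\otimes\psi)\Delta'\tau$, and the Hairer--Kelly $\psi_{n-1}(\tau)$ satisfies the same equation once $\psi$ is known to be a coalgebra morphism. Your alternative suggestion---invoke the universal property of $T^c(\B)$ directly to get the coalgebra morphism and then check multiplicativity by uniqueness---is the cleanest packaging of the same idea.
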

We call $\psi$ the Hairer-Kelly map. Since $\psi$ is graded, for any forest $\tau\in\F$ the image $\psi(\tau)$ is a sum of words of the form
$\tau_1\otimes\dotsm\otimes\tau_k$ where all terms satisfy $|\tau_1|+\dotsb+|\tau_k|=|\tau|$.
Observe that since $\psi$ is a Hopf algebra morphism, in particular a coalgebra morphism, then
\[ (\psi\otimes\psi)\Delta'\tau=\bar{\Delta}'\psi(\tau)=\bar{\Delta}'\psi_{n-1}(\tau), \qquad \tau\in\T_n, \]
since trees are primitive elements in $T(\T)$, being single-letter words.
From the proof of \cite[Lemma 4.9]{Hairer2014} we are able to see that in fact $\psi_{n-1}$ is given by the recursion $\psi_{n-1} = m_{\otimes}(\psi\otimes{\id})\Delta'$ on the linear span of $\T_n$, see also \cite[Definition 1, section 6]{BCF}.

\begin{xmp}
  Here are some examples of the action of $\psi$ on some trees:
  \begin{align*}
    \psi\left( \dtR<i> \right) &= \dtR<i>, \qquad 
    \psi\left( \dtR<a>\dtR<b> \right) = \psi\left( \dtR<a> \right)\shuffle\psi\left( \dtR<b> \right)=\dtR<a>\otimes\dtR<b>+\dtR<b>\otimes\dtR<a>, \qquad
    \psi\left( \dtI<ab> \right) = \dtI<ab> + \dtR<b>\otimes\dtR<a>\\
    \psi\left( \Forest{decorated[a [c [d] ] [b] ]} \right) &= \Forest{decorated[a [c [d] ] [b] ]} + \dtR<b>\otimes\dtII<acd> + \dtR<d>\otimes\dtV<acd> + \dtI<cd>\otimes\dtI<ab> + \dtR<d>\otimes\dtR<c>\otimes\dtI<ab> + \dtR<d>\otimes\dtR<b>\otimes\dtI<ac> + \dtR<b>\otimes\dtR<d>\otimes\dtI<ac>\\
    &\quad + \dtI<cd>\otimes\dtR<b>\otimes\dtR<a> + \dtR<b>\otimes\dtI<cd>\otimes\dtR<a> + \dtR<d>\otimes\dtR<c>\otimes\dtR<b>\otimes\dtR<a> + \dtR<d>\otimes\dtR<b>\otimes\dtR<c>\otimes\dtR<d>
    \\ & \quad + \dtR<b>\otimes\dtR<d>\otimes\dtR<c>\otimes\dtR<a>.
  \end{align*}
  \label{xmp:psi}
\end{xmp}

\subsection{A special class of anisotropic geometric rough paths}
We have already discussed anisotropic geometric rough paths (aGRPs) in \Cref{sse:aRP}. For the Hairer-Kelly construction
we need a very particular subclass of aGRPs, where the base paths $(x^a)_{a\in A}$ are such that each $x^a$ is
${\gamma_a}$-H\"older and there exists $\gamma\in\,]0,1[$ and $(k_a)_{a\in A}\subset\N$ such that $\gamma_a=k_a\gamma$; therefore the H\"older exponents are all integer multiples of a fixed exponent $\gamma$.

We may of course apply the extension result of \Cref{crl:agrpext}, but it turns out that in this setting we
can avoid using the Carnot-Carath\'eodory distance and rather use a more explicit metric, which is a simple
generalization of the homogeneous case \eqref{eq:rho}.

We have already seen that the space $\HH\coloneq T(\T_N)$ can be graded in two ways. We can even define a {\it bigrading} on this
space: for $1\leq n\leq N$ and 
$n\leq j\leq nN$, we define the space $\HH_{(n,j)}$ as the linear span of the words $\tau_1\otimes\cdots\otimes\tau_n\in T(\T_N)$ such that 
$|\tau_1|+\dotsb+|\tau_n|=j$. Then, in analogy with \eqref{eq:coprod}, we have
\[
\shuffle\colon\HH_{(n,j)}\otimes\HH_{(m,h)}\to\HH_{(n+m,j+h)}, \qquad \bar\Delta\colon\HH_{(n,j)} \to \bigoplus_{p=0,\ldots,n, \ q = 1,\ldots,j-1}\HH_{(p,q)}\otimes\HH_{(n-p,j-q)}.
\]
Then, recalling that $ \HH_0=\R\1$, we set
\[
\HH_{N,N} \coloneq\HH_0\oplus \bigoplus_{n=1}^{N}\bigoplus_{j=n}^{N}\HH_{(n,j)}.
\]
In other words, $\HH_{N,N}$ is the linear span of all words $\tau_1\otimes\cdots\otimes\tau_n$ with $n\leq N$ and $|\tau_1|+\dotsb+|\tau_n|\leq N$.
Therefore, analogously to \eqref{eq:hhx} and \eqref{eq:hngrad}, we have decompositions
\[
\HH_{N,N}\ni x=x_0+\sum_{n=1}^N\sum_{j=n}^{N} x_{n,j}, \qquad \HH_{N,N}^*\ni\alpha=\alpha_{(0)}+\sum_{n=1}^N\sum_{j=n}^{N} \alpha_{(n,j)}, \qquad \alpha_{(n,j)}(x)=\alpha(x_{n,j}).
\]
We define now $\g^{N,N}$ as the space of truncated characters on $\HH_{N,N}$, namely of all linear $\alpha\colon\HH_{N,N}\to\R$ such that
\[
\langle\alpha,x\shuffle y\rangle=\langle\alpha,x\rangle\langle\varepsilon,y\rangle+\langle\varepsilon,x\rangle\langle\alpha,y\rangle
\]
for all $x,y\in\HH_{N,N}$ such that $x\shuffle y\in\HH_{N,N}$. Moreover we define $G^{N,N}\coloneq\exp_N(\g^{N,N})\subset\HH^*_N$.
Then we set in analogy with \eqref{lmm:convcomp} for $X\in G^{N,N}$
\begin{equation}
  |X| \coloneq N\max_{n=1,\dotsc,N}\left(\max_{j=n,\dotsc,N}\left( j!\lrn{X_{(n,j)}} \right)^{1/j}+\max_{j=n,\dotsc,N}\left( j!\lrn{\left(X^{-1}\right)_{(n,j)}} \right)^{1/j}\right),
  \label{eq:homnorm2}
\end{equation}
and we can see that
\begin{lmm}
The map $G^{N,N}\times G^{N,N}\ni(X,Y)\mapsto \rho^{N,N}(X,Y)\coloneq|X^{-1}\star Y|\in\R$ defines a distance on $G^{N,N}$.
\end{lmm}
\begin{proof}
We only have to check the triangular inequality, which is equivalent to the sub-additivity property $|X\star Y|\leq |X|+|Y|$ for
all $X,Y\in G^{N,N}$. Arguing as in the proof of \Cref{prp:subadd}
\[  \begin{split}
    \lrn{(X\star Y)_{(n,j)}} &\leq \sum_{m=0}^n\sum_{i=1}^{j-1}\lrn{X_{(m,j)}}\lrn{Y_{(n-m,j-i)}}
    \\ & \le N  \frac{1}{j!} \frac1{N^2} \sum_{i=0}^j\binom{j}{i}|X|^i|Y|^{j-i}
    = \frac1{N} \frac{1}{j!}(|X|+|Y|)^{j}
  \end{split} \]
  whence the result.
\end{proof}
Let $\gamma\in\,]0,1[$ and $N\coloneq\lfloor\gamma^{-1}\rfloor$.
In accordance with \Cref{def:ageo},  an \emph{anisotropic geometric $\gamma$-rough path} in this setting is a map $X\colon[0,1]^2\to G^{N,N}$ which satisfies
  \begin{enumerate}
  \item the Chen rule $X_{su}\star X_{ut}=X_{st}$ for all $(s,u,t)\in[0,1]^3$,
   \item $|\langle X_{st},v\rangle|\lesssim|t-s|^{j\gamma}$ for all $v\in\HH_{(n,j)}$ with $1\leq n\leq N$ and $j\leq N$.
   \end{enumerate}
Then, arguing as in \Cref{prp:genrphold}, it is easy to show that $\bb X\colon[0,1]\to G^{N,N}$ is $\gamma$-Hölder with respect to the metric $\rho^{N,N}$
if and only if $X\colon[0,1]^2\to G^{N,N}$, defined as $X_{st}\coloneq\bb X_s^{-1}\star\bb X_t$, 
is an {anisotropic geometric $\gamma$-rough path} with $\gamma_v=j\gamma$ for $v=\tau_1\otimes\cdots\otimes\tau_n$ with $n\leq N$ and $|\tau_1|+\dotsb+|\tau_n|=j\leq N$.

The next result is the analog of \Cref{crl:agrpext} in this setting. The proof is the same, with one exception: we can use the explicit norm 
\eqref{eq:homnorm2} rather than the Carnot-Carath\'eodory norm $|\cdot|_{\rm CC}$ and we do not need the equivalence of norms result \eqref{eq:eq}.
\begin{prp}\label{thm:main2}
Given $\gamma\in\,]0,1[$ with $\gamma^{-1}\notin\N$ and a collection of paths $x^\tau\colon[0,1]\to\R$, $\tau\in\T_N$, such that $x^\tau\in C^{\gamma|\tau|}$, there exists a $\gamma$-H\"older path $\bb X\colon[0,1]\to G^{N,N}$ such that $\langle \bb X,\tau\rangle =x^\tau$ for all $\tau\in\T_N$.
\end{prp}

\begin{crl}
In the setting of \Cref{thm:main2}, let $(g^\tau:\tau\in \T_N)$ be
a collection of functions with $g^\tau\in C^{\gamma|\tau|}$. Set $\bar{x}^\tau_t = x^\tau_t + g^\tau_t$ and denote by $gX$ the anisotropic geometric $\gamma$-rough path constructed in \Cref{thm:main2} above the path
  \[ \bar{x}_t = \sum_{\tau\in \T_N}\bar{x}_t^\tau \,\tau\in\HH_{(1)}, \qquad t\in[0,1]. \]
  Then, for any two such functions $g$ and $g'$ we have that $g'(gX)=(g+g')X$.
\end{crl}
\begin{proof}
Let $g,g'$ be two collections of functions as in the statement of the theorem. We have the identity \[ \langle [g'(gX)]_t,\tau\rangle = \langle (gX)_t,\tau\rangle + (g')^\tau_t = x^\tau_t + g^\tau_t + (g')^\tau_t = \langle[(g'+g)X]_t,\tau\rangle. \]
Since both $g'(gX)$ and $(g'+g)X$ are constructed iteratively by adding at each step a function $Z$ satisfying \eqref{eq:Zdef} on the dyadics, if we let $L^n$ and $\bar{L}^n$ denote the logarithms corresponding to $g'(gX)$ and $(g'+g)X$, \Cref{lmm:phik} and the previous identity imply that \[ \BCH_{n+1}(L^n_{su},L^n_{ut}) =\BCH_{n+1}(\bar{L}^n_{su},\bar{L}^n_{ut}) \]
and so $g'(gX)=(g'+g)X$.
\end{proof}

\subsection{Branched rough paths are anisotropic geometric rough paths}
The next theorem is almost the same statement as Theorem 4.10 in \cite{Hairer2014}, the only difference being
that we construct an \emph{anisotropic} geometric rough path $\bar X$ while Hairer-Kelly need only that $\bar X$ is geometric in the usual sense (see also \cite[Remark 4.14]{Hairer2014}.
\begin{thm}
Let $\gamma\in\,]0,1[$ with $\gamma^{-1}\notin\N$, and
  let $X$ be a branched $\gamma$-rough path. There exists an anisotropic geometric rough path $\bar{X}\colon[0,1]^2\to G^{N,N}$ with exponents $\gamma=(\gamma_\tau=\gamma|\tau|, \tau\in\T_N)$, and such that
  \[
  \langle X,\tau\rangle = \langle\bar{X},\psi(\tau)\rangle, \qquad \forall\, \tau\in\F_N.
  \]
  \label{thm:b2ag}
\end{thm}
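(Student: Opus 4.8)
The plan is to build $\bar X$ layer by layer, adjoining at stage $n$ the letters corresponding to trees with exactly $n$ nodes, and to use \Cref{thm:main2} together with the extension property of \Cref{crl:ext} to keep the components already constructed untouched. Fix the alphabet $A=\T_N$ and give each letter $\tau$ the exponent $\gamma_\tau\coloneq\gamma|\tau|$; since $|\tau|\le N<\gamma^{-1}$ we have $\gamma_\tau\in(0,1)$, and $\hat\gamma=\min_{\tau\in\T_N}\gamma_\tau=\gamma$ is attained on the one-node trees. The weight of a word $v=\sigma_1\dotsm\sigma_k$ over $\T_N$ is then the integer $\omega(v)=\tfrac1\gamma\sum_i\gamma|\sigma_i|=\sum_i|\sigma_i|$, so $\f L=\{v:\omega(v)\le\gamma^{-1}\}=\{v:\sum_i|\sigma_i|\le N\}$. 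Because the Hairer--Kelly map $\psi$ of \Cref{lmm:HK} is graded, every word occurring in $\psi(\tau)$ has total degree $|\tau|$, so all words appearing in $\psi(\tau)$ with $\tau\in\T_N$ (and, by multiplicativity, in $\psi(\sigma)$ with $\sigma\in\F$, $|\sigma|\le N$) lie in $\f L$; this makes the pairings below meaningful. Finally, all exponents being integer multiples of $\gamma$, the hypothesis of \Cref{thm:main2} becomes $1\notin\gamma\N$, i.e. $\gamma^{-1}\notin\N$, which is assumed.

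We argue by induction on $n=1,\dots,N$, the inductive statement being: there is an anisotropic geometric rough path $\bar X^{(n)}$ over $\T_n$ (with the above exponents) whose restriction to words over $\T_{n-1}$ is $\bar X^{(n-1)}$ and such that $\langle X_{st},\sigma\rangle=\langle\bar X^{(n)}_{st},\psi(\sigma)\rangle$ for every forest $\sigma$ with $|\sigma|\le n$. For $n=1$ we take for $\bar X^{(1)}$ the anisotropic geometric rough path of \Cref{thm:main2} over the one-node trees with component functions $t\mapsto x^\tau_t\coloneq\langle X_{0t},\tau\rangle$, which are $\gamma$-Hölder and vanish at $0$ by \Cref{def:brp}. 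For the inductive step, let $|\tau|=n$ and set
\[ z^\tau_t\coloneq\langle X_{0t},\tau\rangle-\langle\bar X^{(n-1)}_{0t},\psi_{n-1}(\tau)\rangle , \]
which is well defined since $\psi_{n-1}(\tau)\in T^c(\T_{n-1})$ only involves letters of degree $\le n-1$. The point is the cancellation
\[ z^\tau_t-z^\tau_s=\langle X_{st},\tau\rangle-\langle\bar X^{(n-1)}_{st},\psi_{n-1}(\tau)\rangle . \]
To see this, expand $X_{0t}=X_{0s}\star X_{st}$ against $\Delta\tau=\tau\otimes\1+\1\otimes\tau+\Delta'\tau$ and likewise $\bar X^{(n-1)}_{0t}=\bar X^{(n-1)}_{0s}\star\bar X^{(n-1)}_{st}$ against $\bar\Delta\psi_{n-1}(\tau)$; using that $\psi$ is a coalgebra morphism and that trees are primitive in $T^c(\T)$ one gets $\bar\Delta'\psi_{n-1}(\tau)=\bar\Delta'\psi(\tau)=(\psi\otimes\psi)\Delta'\tau$, so the two remainder terms are $\langle X_{0s}\otimes X_{st},\Delta'\tau\rangle$ and $\langle(\bar X^{(n-1)}_{0s}\circ\psi)\otimes(\bar X^{(n-1)}_{st}\circ\psi),\Delta'\tau\rangle$ respectively; since $\Delta'\tau$ is supported on forests of degree between $1$ and $n-1$, the inductive hypothesis identifies these two terms and they cancel. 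Both summands on the right-hand side of the displayed cancellation are $O(|t-s|^{\gamma n})$ — the first by the analytic bound of \Cref{def:brp}, the second because every word in $\psi_{n-1}(\tau)$ has weight $\omega=n$ and $\bar X^{(n-1)}$ is an anisotropic geometric rough path. Hence $z^\tau\in C^{\gamma n}([0,1])$ with $z^\tau_0=0$, and applying \Cref{crl:ext} finitely many times (once per tree with $n$ nodes) to \Cref{thm:main2} yields an anisotropic geometric rough path $\bar X^{(n)}$ over $\T_n$ extending $\bar X^{(n-1)}$ and with single-letter component $\langle\bar X^{(n)}_{0t},\tau\rangle=z^\tau_t$ for $|\tau|=n$.

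It remains to propagate the identity to level $n$. For $|\tau|=n$ and $s=0$,
\[ \langle\bar X^{(n)}_{0t},\psi(\tau)\rangle=\langle\bar X^{(n)}_{0t},\tau\rangle+\langle\bar X^{(n)}_{0t},\psi_{n-1}(\tau)\rangle=z^\tau_t+\langle\bar X^{(n-1)}_{0t},\psi_{n-1}(\tau)\rangle=\langle X_{0t},\tau\rangle , \]
using $\psi(\tau)=\tau+\psi_{n-1}(\tau)$ and that $\bar X^{(n)}$ restricted to words over $\T_{n-1}$ is $\bar X^{(n-1)}$. Since $\psi$ is an algebra morphism, $\bar X^{(n)}_{0t}\circ\psi$ and $X_{0t}$ are both characters on $\HH$; having just checked they agree on all trees of degree $\le n$ (the case $|\tau|<n$ being the inductive hypothesis), they agree on all forests of degree $\le n$. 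Because a Hopf algebra morphism commutes with antipodes we also get $(\bar X^{(n)}_{0s})^{-1}\circ\psi=X_{0s}^{-1}$ on degree $\le n$, and one more use of $\bar\Delta\psi=(\psi\otimes\psi)\Delta$ gives $\langle\bar X^{(n)}_{st},\psi(\sigma)\rangle=\langle X_{0s}^{-1}\otimes X_{0t},\Delta\sigma\rangle=\langle X_{st},\sigma\rangle$ for all $|\sigma|\le n$. This closes the induction; set $\bar X\coloneq\bar X^{(N)}$. The identity $\langle X,\tau\rangle=\langle\bar X,\psi(\tau)\rangle$ now holds for every forest of degree $\le N$; to reach all $\tau\in\F$ one extends $\bar X$ to a genuine geometric rough path via \Cref{thm:geomext} and observes that, by the Sewing Lemma uniqueness discussed after \Cref{def:brp}, both $(s,t)\mapsto\langle X_{st},\tau\rangle$ and $(s,t)\mapsto\langle\bar X_{st},\psi(\tau)\rangle$ are determined by their restrictions to trees with at most $N$ nodes, on which they coincide.

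\textbf{Main obstacle.} The analytic estimate is essentially automatic once the cancellation identity for $z^\tau$ is in place; the real content is the algebraic bookkeeping that produces it — keeping careful track of degrees so that the $\Delta'$-remainders live strictly below level $n$ and are killed by the inductive hypothesis — together with the use of the coalgebra, algebra and antipode compatibility of $\psi$ to upgrade the one-point identity at $s=0$ to the two-point identity, and the verification that \Cref{thm:main2}, via \Cref{crl:ext}, can be applied one layer at a time without disturbing the components already constructed.
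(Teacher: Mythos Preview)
Your argument is correct and follows essentially the same route as the paper: an induction over the number of nodes, using that $\psi$ is a coalgebra morphism and that trees are primitive in $T^c(\T)$ to show that the difference $\langle X_{st},\tau\rangle-\langle\bar X^{(n-1)}_{st},\psi_{n-1}(\tau)\rangle$ is an increment of a $C^{\gamma|\tau|}$ function, then invoking \Cref{thm:main2} and \Cref{crl:ext} to extend. The only cosmetic differences are that the paper phrases the key cancellation as $\delta F^\tau=\delta G^\tau$ (invoking the kernel of $\delta$) rather than your explicit definition of $z^\tau_t$ at $s=0$, and that the paper verifies the two-point identity $\langle\bar X^{(n)}_{st},\psi(\tau)\rangle=\langle X_{st},\tau\rangle$ directly---which you could also have done, since you already established the increment formula for $z^\tau$---whereas you detour through $s=0$ and the antipode; your extension to all of $\F$ via \Cref{thm:geomext} and Sewing-Lemma uniqueness is more explicit than the paper's.
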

\begin{proof}
  We construct $\bar{X}$ iteratively as follows.
  Let $\bar{X}^{(1)}$ be the anisotropic geometric rough path indexed by $\T_1=\{\dtR<1>,\dotsc,\dtR<d>\}$ over the paths $(x^i_t\coloneq \langle X_t,\dtR<i>\rangle:i=1,\dotsc,d)$ with exponents $(\gamma_{\dtR<i>}=\gamma)$ given by \Cref{thm:main2} (alternatively we could use have used \Cref{thm:charext} since all the exponents are equal). This will give us an anisotropic rough path path $X\colon[0,1]^2\to G_{\mathrm a}(\T_1)$ with exponents $(\gamma_\tau=\gamma, \tau\in\T_1)$.

  Suppose we have constructed anisotropic geometric rough paths $\bar{X}^{(k)}\colon[0,1]^2\to G_{\mathrm a}(\T_k)$ over the paths $(x^\tau:\tau\in\T_k)$ such that $x^\tau_t-x^\tau_s=\langle X_{st},\tau\rangle-\langle\bar{X}_{st}^{(k-1)},\psi_{k-1}(\tau)\rangle$ for $k=1,\dotsc,n$.
  This is true for $n=1$ by the previous paragraph, since $\psi(\dtR<i>)=\dtR<i>$ for all $i=1,\dotsc,d$.

  If we let $F_{st}^\tau=\langle X_{st},\tau\rangle$ and $G_{st}^\tau=\langle \bar{X}^{(n)}_{st},\psi_n(\tau)\rangle$ for $\tau\in\T_{n+1}$ we have, by Chen's rule, that
  \begin{align*}
    \delta F^\tau_{sut} &= \langle X_{su}\otimes X_{ut},\Delta'\tau\rangle
    = \langle \bar{X}^{(n)}_{su}\circ\psi\otimes\bar{X}^{(n)}_{ut}\circ\psi,\Delta'\tau\rangle.
  \end{align*}
  Since $\psi$ is in particular a coalgebra morphism between $(\cal H,\Delta)$ and $(T(\T_N),\bar{\Delta})$ we obtain the identity $\delta F^\tau_{sut}=\langle\bar{X}^{(n)}_{su}\otimes\bar{X}^{(n)}_{ut},\bar{\Delta}'\psi(\tau)\rangle$, which then, by \Cref{lmm:HK} becomes
  \begin{equation}
\label{eq:deltaF}
    \delta F^\tau_{sut} = \langle\bar{X}^{(n)}_{su}\otimes\bar{X}^{(n)}_{ut},\bar{\Delta}'\psi_n(\tau)\rangle = \delta G^\tau_{sut}.
  \end{equation}
  since every $\tau\in\T$ is primitive in $(T(\T_N),\bar\Delta)$ being a single-letter word.

  The finite increment operator $\delta$ has the following property: if $J\colon[0,1]^2\to\R$ is such that $\delta J=0$ then there exists $f\colon[0,1]\to\R$ such that $J_{st}=f_t-f_s$, and the function $f$ is unique up to an additive constant shift, see also \cite[formula (5)]{Gubinelli2010}.
  Thus, by this fundamental property, for each $\tau\in\T_{n+1}$ there exists a function $x^\tau\colon[0,1]\to\R$ such that $x_t^\tau-x_s^\tau=F^\tau_{st}-G^\tau_{st}$ and then
  \begin{align*}
    |x_t^\tau-x_s^\tau| &\leq |\langle X_{st},\tau\rangle|+|\langle\bar{X}^{(n)}_{st},\psi_n(\tau)\rangle|
    \lesssim|t-s|^{\gamma|\tau|}
  \end{align*}
  since $\psi_n(\tau)$ preserves the number of nodes by \Cref{lmm:HK}.

  Repeatedly using \Cref{thm:main2} we obtain an anisotropic geometric rough path $\bar X^{(n+1)}\colon[0,1]\to G_{\mathrm a}(\T_{n+1})$ over $(x^\tau:\tau\in\T_{n+1})$ whose restriction to $T(\T_n)$ coincides with $\bar X^{(n)}$.

  Finally notice that if $\tau\in\T_{n+1}$ is a tree then
  \begin{align*}
    \langle\bar{X}^{(n+1)}_{st},\psi(\tau)\rangle &= \langle\bar{X}^{(|\tau|)}_{st},\tau\rangle+\langle\bar{X}^{(|\tau|)}_{st},\psi_{|\tau|-1}(\tau)\rangle\\
    &= x^\tau_t-x^\tau_s+\langle X_{st},\tau\rangle-(x^\tau_t-x^\tau_s)
    = \langle X_{st},\tau\rangle
  \end{align*}
  and the corresponding identity for arbitrary forests follows by multiplicativity.
  The anisotropic geometric rough path sought for is $\bar{X}=\bar{X}^{(N)}$.
\end{proof}
We note that our proof is shorter and simpler than that of \cite[Theorem 4.10]{Hairer2014}, so we will now dedicate a few paragraphs to highlight the differences between our approach and that of Hairer and Kelly.
They define first
\[
\hat{\bb X}_t^1=\exp_N\left(\sum_{a\in A}x^a_t\,\dtR<a> \right)\in G^N(\T_1)
\]
then they note that this is not $\gamma$-H\"older with values in $G^N(\T_1)$, but it is $\gamma$-H\"older with
values in $G^N(\T_1)/K_1$, where $K_1\coloneq\exp_N(W_2+\cdots+W_N)$, see \eqref{WW}. By the Lyons-Victoir extension theorem there exists a $\gamma$-H\"older path $\bar{\bb X}_t^1\to G^N(\T_1)$
such that $\pi_{G^N(\T_1)\to G^N(\T_1)/K_1}(\bar{\bb X}^1)=\hat{\bb X}^1$. Then, in order to add a new tree $\tau$ with $|\tau|=2$, they define
\[
(\delta \bar X^\tau)_{st}=\langle X_{st},\tau\rangle-\langle \bar{X}^{(1)}_{st},\psi_1(\tau)\rangle
\]
and this defines the new function $t\mapsto \langle \hat{\bb X}_{t},\tau\rangle$. Then they
define
\[
\hat{\bb X}_t^2=\exp_N\left(\sum_{a\in A}x^a_t\,\dtR<a>+\sum_{|\tau|=2} \langle \hat{\bb X}_{t},\tau\rangle\, \tau \right)
\in G^N(\T_2)
\]
and again they note that this path is not $\gamma$-H\"older with values in $G^N(\T_2)$, but it is with values in $G^N(\T_2)/K_2$, where $K_2\coloneq\exp_N(W_3+\cdots+W_N)$, and again the Lyons-Victoir extension theorem yields
a $\gamma$-H\"older path $\bar{\bb X}_t^2\to G^N(\T_2)$
such that $\pi_{G^N(\T_2)\to G^N(\T_2)/K_2}(\bar{\bb X}^2)=\hat{\bb X}^2$. Finally they construct recursively in this way $\hat{\bb X}^k$ and $\bar{\bb X}^k$ for all $k\leq N$.

At this point we see the difference with our approach. We do not define $\hat{\bb X}_t^2$ nor $\hat{\bb X}^k$ but
rather we construct $\bar {\bb X}$ step by step, namely on all $G^k(\T_n)$ with $1\leq k,n\leq N$,
first by recursion on $k$ for fixed $n$ and then by recursion on $n$;
at each step we enforce the H\"older continuity on $G^k(\T_n)$ and
the compatibility with the previous levels. This is done using the Lyons-Victoir technique,
but in a very explicit and constructive way, in particular without ever using the axiom of choice,
since we have the explicit map $\exp_{k+1}\circ\log_k \colon G^k(\T_n) \to G^{k+1}(\T_n)$ which plays
the role of the injection $i_{G/K,G}\colon G/K\to G$ in \cite[Proposition 6]{Lyons2007}.

\section{An action on branched rough paths}
\label{sss:mod}

In this section we prove \Cref{thm:intro.trans}.

Given $\gamma\in\,]0,1[$, let $N=\lfloor\gamma^{-1}\rfloor$ and denote by $\C^\gamma$ the set of collections of functions $(g^\tau)_{\tau\in\T_N}$ such that $g^\tau\in C^{\gamma|\tau|}$ and $g^\tau_0=0$ for all $\tau\in\T_N$.
It is easy to see that $\C^\gamma$ is a group under pointwise addition in $t$, that is,
\[ (g+h)^\tau \coloneq g^\tau+h^\tau. \]
As a consequence of \Cref{thm:main2}, $(g,\bar X)\mapsto g\bar X$ is an action of $\C^\gamma$ on the space of anisotropic geometric rough paths.

We use the Hairer-Kelly map $\psi$ of \Cref{lmm:HK} to induce an action of $\C^\gamma$ on branched rough paths.
Given a branched rough path $X$ and $g\in\C^\gamma$ we let $gX$ be the branched rough path defined by \[ \langle gX_{st},\tau\rangle=\langle g\bar{X}_{st},\psi(\tau)\rangle, \]
where $\bar{X}$ is the anisotropic geometric rough path given by \Cref{thm:b2ag}.
As a simple consequence of \Cref{thm:main2} we obtain
\begin{prp} \label{pr:1}
Let $X\in\BR^\gamma$.
\begin{enumerate}
\item We have $g'(g X)=(g'+g)X$ for all $g,g'\in\C^\gamma$.
\item If $(g^\tau)_{\tau\in \T_N}\in\cal C^\gamma$ is such that there exists a unique $\tau\in\T_N$
with $g^\tau\not\equiv 0$, then
\[
  \langle (gX)_{st},\tau\rangle=\langle X_{st},\tau\rangle+g^\tau_t-g^\tau_s
\]
and $\langle gX,\sigma\rangle=\langle X,\sigma\rangle$ for all $\sigma\in\T$ not containing $\tau$ as a subtree.
\end{enumerate}
\end{prp}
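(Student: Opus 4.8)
The plan is to trace every assertion back to \Cref{thm:main2} and \Cref{crl:ext} through the Hairer--Kelly encoding. The crucial preliminary step, which I would carry out first, is to prove that the anisotropic geometric rough path produced by \Cref{thm:b2ag} from the branched rough path $gX$ coincides with $g\bar X$; call this $(\star)$. One proves $(\star)$ by induction on the level. At level $1$ the underlying paths of $gX$ are $\langle gX_{0t},\dtR<i>\rangle=\langle g\bar X_{0t},\psi(\dtR<i>)\rangle=\langle g\bar X_{0t},\dtR<i>\rangle=x^i_t+g^i_t$, and iterating \Cref{crl:ext} shows that the restriction of $g\bar X$ to $\mathrm M(\T_1)$ is exactly the anisotropic rough path over $(x^i+g^i)_i$, i.e. the level-$1$ output of the construction of \Cref{thm:b2ag} applied to $gX$. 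For the inductive step, if that construction has reproduced $g\bar X$ on $T^c(\T_n)$, then using $\psi(\sigma)=\sigma+\psi_n(\sigma)$ from \Cref{lmm:HK} the new path $y^\sigma$ attached to a tree $\sigma$ with $|\sigma|=n+1$ satisfies
\[ y^\sigma_t-y^\sigma_s=\langle gX_{st},\sigma\rangle-\langle g\bar X_{st},\psi_n(\sigma)\rangle=\langle g\bar X_{st},\sigma\rangle=(x^\sigma+g^\sigma)_t-(x^\sigma+g^\sigma)_s, \]
where $x^\sigma$ is the path built in the proof of \Cref{thm:b2ag}; by \Cref{crl:ext} the level-$(n+1)$ output again agrees with $g\bar X$ on $T^c(\T_{n+1})$, and this yields $(\star)$.

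With $(\star)$ in hand point (1) is immediate: the anisotropic rough path attached by \Cref{thm:b2ag} to $gX$ is $g\bar X$, so by the definition of the action and \Cref{thm:main2}(2),
\[ \langle g'(gX)_{st},\tau\rangle=\langle g'(g\bar X)_{st},\psi(\tau)\rangle=\langle (g'+g)\bar X_{st},\psi(\tau)\rangle=\langle (g'+g)X_{st},\tau\rangle , \]
so $g\mapsto gX$ is a genuine action of the abelian group $\C^\gamma$.

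For point (2) fix the unique $\tau$ with $g^\tau\not\equiv0$ and write $\langle gX_{st},\tau\rangle=\langle g\bar X_{st},\tau\rangle+\langle g\bar X_{st},\psi_{|\tau|-1}(\tau)\rangle$. The first summand equals $(x^\tau+g^\tau)_t-(x^\tau+g^\tau)_s$, where $x^\tau$ is the path from the proof of \Cref{thm:b2ag}, which satisfies $x^\tau_t-x^\tau_s=\langle X_{st},\tau\rangle-\langle\bar X_{st},\psi_{|\tau|-1}(\tau)\rangle$. The second summand pairs $g\bar X$ only against words whose letters are trees with fewer than $|\tau|$ nodes, hence do not use the letter $\tau$; since $g^\sigma\equiv0$ for $\sigma\ne\tau$, \Cref{crl:ext} gives $\langle g\bar X_{st},\psi_{|\tau|-1}(\tau)\rangle=\langle\bar X_{st},\psi_{|\tau|-1}(\tau)\rangle$, the two $\psi_{|\tau|-1}(\tau)$-terms cancel, and we are left with $\langle gX_{st},\tau\rangle=\langle X_{st},\tau\rangle+g^\tau_t-g^\tau_s$. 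For the last claim I would prove by induction on $|\sigma|$ that $\langle gX_{st},\sigma\rangle=\langle X_{st},\sigma\rangle$ for every $\sigma\in\T$ not containing $\tau$ as a subtree. If $|\sigma|\le N$, \Cref{prp:HKpart} writes $\psi(\sigma)$ as a sum of words whose letters are subtrees of $\sigma$; by transitivity of the subtree relation none of them is $\tau$, so with $g^\sigma\equiv 0$ for $\sigma\ne\tau$ and \Cref{crl:ext} one gets $\langle gX_{st},\sigma\rangle=\langle g\bar X_{st},\psi(\sigma)\rangle=\langle\bar X_{st},\psi(\sigma)\rangle=\langle X_{st},\sigma\rangle$. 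If $|\sigma|>N$, then $\gamma|\sigma|>1$ (here $\gamma^{-1}\notin\N$ is used) and both $\langle X,\sigma\rangle$ and $\langle gX,\sigma\rangle$ solve \eqref{redcop}, whose right-hand side is a fixed polynomial in the values of the rough path on proper subtrees of $\sigma$, none of which contains $\tau$; by the inductive hypothesis the two right-hand sides coincide, and the Sewing Lemma forces $\langle gX,\sigma\rangle=\langle X,\sigma\rangle$. Multiplicativity then extends the identity from trees to arbitrary forests.

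The main obstacle is the preliminary fact $(\star)$: it is exactly what makes the action well defined and what reduces point (1) to \Cref{thm:main2}(2), and proving it requires carefully tracking which one-dimensional paths enter at each stage of the construction of \Cref{thm:b2ag} applied to $gX$ and invoking \Cref{crl:ext} in the sharp form that the restriction of an anisotropic rough path to words avoiding a given letter depends only on the one-dimensional paths indexed by the remaining letters. Everything else is bookkeeping with the Hairer--Kelly map and the Sewing Lemma.
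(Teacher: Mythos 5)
Your proof is correct and follows essentially the same route as the paper — reduction to \Cref{thm:main2} and \Cref{crl:ext} via the Hairer--Kelly map — but is substantially more careful on two points where the paper is terse.

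First, you are right that the preliminary fact $(\star)$ (that the aGRP produced by \Cref{thm:b2ag} from the modified branched rough path $gX$ is precisely $g\bar X$) is the crux of point (1): without it, the definition of the action cannot be iterated consistently. The paper disposes of (1) with a single reference to \Cref{thm:main2}, implicitly assuming $(\star)$; you prove it by the level-by-level induction on the construction of \Cref{thm:b2ag}, using $\psi(\sigma)=\sigma+\psi_n(\sigma)$ and \Cref{crl:ext} to show the paths fed into the construction at each stage are $x^\sigma+g^\sigma$. This is exactly what's needed and makes the argument actually close.

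Second, for the last assertion of (2) the paper only argues that $\psi(\sigma)\in T^c(\T_N\setminus\{\tau\})$ when $\sigma$ does not contain $\tau$, which covers trees with $|\sigma|\le N$. You supplement this with an induction on $|\sigma|$ that, for $|\sigma|>N$, runs Chen's rule \eqref{redcop} together with the Sewing Lemma: the right-hand side of \eqref{redcop} is a polynomial in values on proper subtrees of $\sigma$, none of which contains $\tau$, so by the inductive hypothesis and uniqueness in the Sewing Lemma the values agree. This closes the case the paper leaves implicit. Your argument for $|\sigma|\le N$ — that the letters of $\psi(\sigma)$ are subtrees of $\sigma$ by \Cref{prp:HKpart}, hence none equals $\tau$, and then \Cref{crl:ext} finishes — is the same argument the paper sketches. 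The derivation of $\langle(gX)_{st},\tau\rangle=\langle X_{st},\tau\rangle+g^\tau_t-g^\tau_s$ via the cancellation of the $\psi_{|\tau|-1}(\tau)$ contributions also matches the paper's. In short: same approach, but your write-up fills genuine gaps in the published proof.
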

\begin{proof}
The first claim follows from point (1) in \Cref{thm:main2}. In order to prove the second claim, let
$g=(g^\tau)_{\tau\in \T_N}\in\cal C^\gamma$ be such that there exists a unique $\tau\in\T_N$
with $g^\tau\not\equiv 0$. Then by the property of $g$ we have
\begin{align*}
  \langle gX,\tau\rangle &= \langle g\overline X, \psi(\tau)\rangle= \langle g\overline X,\tau+\psi_{|\tau|-1}(\tau)\rangle\\
  &= \langle\bar X,\tau\rangle + \delta g^\tau + \langle g\bar X,\psi_{|\tau|-1}(\tau)\rangle
\end{align*}
where $\delta g^\tau_{st}\coloneq g^\tau_t-g^\tau_s$.
By \Cref{lmm:HK} the tree $\tau$ does not appear as a factor in any of the tensor products appearing in $\psi_{|\tau|-1}(\tau)$, hence one can recursively show that $\langle g\bar X,\psi_{|\tau|-1}(\tau)\rangle=\langle\bar X,\psi_{|\tau|-1}(\tau)\rangle$ so that the above expression becomes
\begin{align*}
  \langle gX,\tau\rangle &= \langle\bar X,\tau+\psi_{|\tau|-1}(\tau)\rangle+\delta g^\tau\\
  &= \langle X,\tau\rangle + \delta g^\tau.
\end{align*}
 For the last assertion, it is enough to note that $\sigma\in\T$ contains $\tau\in\T$ if and only if $\tau$ appears in the expression for $\psi(\sigma)$; this can be expressed more precisely by saying that $\sigma\notin T(\T_N\setminus\{\tau\})$.
But if $\sigma\in T(\T_N\setminus\{\tau\})$, then $\langle g\overline X,\psi(\tau)\rangle=\langle\overline X,\psi(\tau)\rangle$.
\end{proof}
\begin{prp}\label{pr:2}
  The action of $\C^\gamma$ on branched $\gamma$-rough paths is transitive: for every pair of branched $\gamma$-rough paths $X$ and $X'$ there exists $g\in\C^\gamma$ such that $gX=X'$.
  \label{thm:trans}
\end{prp}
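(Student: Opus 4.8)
The plan is to prove transitivity by building the translating element $g$ \emph{level by level}: given $X,X'\in\BR^\gamma$ I will produce, for each $n=1,\dots,N$, an element $g_n\in\C^\gamma$ supported on the trees with exactly $n$ nodes such that $X_n:=g_n(\cdots(g_1X))$ agrees with $X'$ on every tree of at most $n$ nodes, and then take $g=g_1+\dots+g_N$. For the base step $n=1$, set $x^i_t=\langle X_{0t},\dtR<i>\rangle$ and $(x')^i_t=\langle X'_{0t},\dtR<i>\rangle$; these lie in $C^\gamma$ and vanish at $0$, so $g_1\in\C^\gamma$ with $g_1^{\dtR<i>}:=(x')^i-x^i$ is admissible, and by point (2) of \Cref{pr:1} the branched rough path $X_1:=g_1X$ satisfies $\langle (X_1)_{st},\dtR<i>\rangle=\langle X'_{st},\dtR<i>\rangle$ for all $i$, i.e.\ $X_1$ and $X'$ coincide on all one-node trees.

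For the inductive step, assume $g_1,\dots,g_n$ have been found so that $X_n$ agrees with $X'$ on every tree of size $\le n$; since characters are multiplicative, $X_n$ and $X'$ then also agree on every \emph{forest} of size $\le n$. Fix a tree $\tau$ with $|\tau|=n+1$. The reduced coproduct $\Delta'\tau$ is a linear combination of terms $\omega_{(1)}\otimes\omega_{(2)}$ in which $\omega_{(1)},\omega_{(2)}$ are forests of size $\ge 1$ with $|\omega_{(1)}|+|\omega_{(2)}|=n+1$, hence each of size $\le n$. Consequently \eqref{redcop}, written for $X'$ and for $X_n$, shows that $J_{st}:=\langle X'_{st},\tau\rangle-\langle (X_n)_{st},\tau\rangle$ satisfies $\delta J=0$. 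By the elementary property of $\delta$ recalled in the proof of \Cref{thm:b2ag}, there is a unique $h^\tau$ with $h^\tau_0=0$ and $h^\tau_t-h^\tau_s=J_{st}$; the analytic bounds for the branched rough paths $X_n$ and $X'$ give $|J_{st}|\lesssim|t-s|^{\gamma(n+1)}$, and since $(n+1)\gamma\le N\gamma<1$ this yields $h^\tau\in C^{\gamma|\tau|}$. Let $g_{n+1}\in\C^\gamma$ be the element whose $\tau$-component equals $h^\tau$ when $|\tau|=n+1$ and vanishes otherwise, and set $X_{n+1}:=g_{n+1}X_n$.

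To close the induction I must check that $X_{n+1}$ agrees with $X'$ on all trees of size $\le n+1$. Decompose $g_{n+1}$ as a finite sum of elements of $\C^\gamma$ each with a single non-zero component (as observed after \Cref{thm:intro.trans}) and apply these one at a time, using point (1) of \Cref{pr:1}. Two trees of the same size can contain one another only if they are isomorphic, and a tree of size $\le n$ can contain no tree of size $n+1$; hence point (2) of \Cref{pr:1} shows that the single-tree element with component $h^\tau$ modifies $\langle\cdot,\sigma\rangle$ only for $\sigma$ isomorphic to $\tau$ (among trees of size $\le n+1$), changing its value to $\langle X',\tau\rangle$, while leaving every other tree of size $\le n+1$ and every tree of size $\le n$ unchanged. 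Therefore $X_{n+1}$ and $X'$ coincide on all trees of size $\le n+1$. After $N$ steps, $X_N$ and $X'$ agree on $\T_N$. For a tree $\tau$ with $|\tau|>N$ one has $\gamma|\tau|>1$ (this is where $\gamma^{-1}\notin\N$ enters), $\Delta'\tau$ involves only forests of size $\le|\tau|-1$ on which $X_N$ and $X'$ already agree, and an induction on $|\tau|$ using the uniqueness statement of the Sewing Lemma forces $\langle (X_N)_{st},\tau\rangle=\langle X'_{st},\tau\rangle$; by multiplicativity $(X_N)_{st}=X'_{st}$ for all $s,t$, i.e.\ $X_N=X'$. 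Finally, iterating point (1) of \Cref{pr:1} gives $X'=(g_1+\dots+g_N)X$, so $g:=g_1+\dots+g_N\in\C^\gamma$ satisfies $gX=X'$.

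The only step I expect to require genuine care is the inductive step: one must first promote ``agreement on trees of size $\le n$'' to ``agreement on forests of size $\le n$'' via multiplicativity before invoking the shape of $\Delta'$, and one must then verify --- through the non-containment bookkeeping for trees of equal size together with points (1)--(2) of \Cref{pr:1} --- that the simultaneous single-tree corrections at level $n+1$ neither interfere with one another nor disturb the already-fixed lower levels. Everything else is either the composition law of the action or the standard uniqueness machinery for $\delta$ and the Sewing Lemma.
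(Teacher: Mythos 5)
Your proof is correct, and while it shares the paper's inductive skeleton (build $g$ level by level, using $\delta(\text{error})=0$ to solve for the new components), it is organized differently. The paper constructs the components $g^\tau$ directly on the aGRP side: it sets $F^\tau_{st}=\langle X'_{st},\tau\rangle-\langle\bar X_{st},\tau\rangle-\langle g\bar X_{st},\psi_n(\tau)\rangle$, verifies $\delta F^\tau=0$ via the coalgebra-morphism property of the Hairer--Kelly map, and then checks $\langle gX,\tau\rangle=\langle X',\tau\rangle$ by unwinding $\langle g\bar X,\psi(\tau)\rangle$. You instead stay entirely on the branched side: $J_{st}=\langle X'_{st},\tau\rangle-\langle(X_n)_{st},\tau\rangle$, $\delta J=0$ via \eqref{redcop}, and then you build $g$ as a composite $g_N\circ\dots\circ g_1$ of single-level elements, invoking points (1) and (2) of \Cref{pr:1} as black boxes rather than re-deriving them from $\psi$. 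The price of this modularity is the extra ``containment bookkeeping'': you must argue that a tree of size $\le n$ cannot contain one of size $n+1$, and that two non-isomorphic trees of the same size cannot contain one another, so that the level-$(n+1)$ corrections neither interfere with each other nor spoil the agreement already secured on $\T_n$. This is exactly what point (2) of \Cref{pr:1} is designed to support, and your use of it is valid. (The final paragraph about trees with $|\tau|>N$ is correct but slightly redundant, since by \Cref{thm:geomext} and Gubinelli's version of the Sewing Lemma a branched $\gamma$-rough path is determined by its restriction to $\T_N$; still, it does no harm.) In short: same core idea, but you filter the argument through \Cref{pr:1} rather than through the explicit Hairer--Kelly formula, which makes the transitivity proof more self-contained at the cost of one extra combinatorial check.
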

\begin{proof}
  We define $g\in\C^\gamma$ inductively by imposing the desired identity.
  For trees $\tau\in\T_1=\left\{ \dtR<1>,\dotsc,\dtR<d> \right\}$ we set $g^\tau_t=\langle X'_{0t},\tau\rangle-\langle X_{0t},\tau\rangle\in C^\gamma$ so that
  \begin{align*}
    \langle gX,\tau\rangle &= \langle g\bar{X},\psi(\tau)\rangle
    = \langle g\bar{X},\tau\rangle
    = \langle \bar{X},\tau\rangle+ \delta g^\tau
    = \langle X',\tau\rangle
  \end{align*}
where $\delta g^\tau_{st}\coloneq g^\tau_t-g^\tau_s$. Suppose we have already defined $g^\tau$ for all $\tau\in\T_n$ for some $n\geq 1$, satisfying the constraints in the definition of $\C^\gamma$.
  For a tree $\tau$ with $|\tau|=n+1$ we define \[ F^\tau_{st}=\langle X'_{st},\tau\rangle-\langle\bar{X}_{st},\tau\rangle-\langle g\bar{X}_{st},\psi_n(\tau)\rangle. \]
Then
  \begin{align*}
    \delta F^\tau_{sut} &= \langle X'_{su}\otimes X'_{ut},\Delta'\tau\rangle-\langle g\bar{X}_{su}\otimes g\bar{X}_{ut},\bar{\Delta}'\psi_n(\tau)\rangle\\
    &= \langle X'_{su}\otimes X'_{ut},\Delta'\tau\rangle-\langle g\bar{X}_{su}\otimes g\bar{X}_{ut},\bar{\Delta}'\psi(\tau)\rangle\\
    &= \langle X'_{su}\otimes X'_{ut},\Delta'\tau\rangle-\langle g\bar{X}_{su}\circ\psi\otimes g\bar{X}_{ut}\circ\psi,{\Delta}'\tau\rangle\\
    &= \langle X'_{su}\otimes X'_{ut},\Delta'\tau\rangle-\langle gX_{su}\otimes gX_{ut},{\Delta}'\tau\rangle
    = 0
  \end{align*}
  by the induction hypothesis. Hence there is $g^\tau\colon[0,1]\to\R$ such that $g^\tau_0=0$ and
  \begin{equation}
    g^\tau_t-g^\tau_s=\langle X'_{st},\tau\rangle-\langle\bar{X}_{st},\tau\rangle-\langle g\bar{X}_{st},\psi_n(\tau)\rangle
    \label{eq:gdef}
  \end{equation}
  whence $g\in C^{\gamma|\tau|}$; by construction
  \begin{align*}
    \langle gX,\tau\rangle &= \langle g\bar{X},\psi(\tau)\rangle
    = \langle g\bar{X},\tau\rangle+\langle g\bar{X},\psi_n(\tau)\rangle\\
    &= \langle \bar{X},\tau\rangle+\delta g^\tau+\langle g\bar{X},\psi_n(\tau)\rangle= \langle X',\tau\rangle,
  \end{align*}
where $\delta g^\tau_{st}= g^\tau_t-g^\tau_s$. This concludes the proof.
\end{proof}

\begin{prp}\label{pr:3}
The action of $\C^\gamma$ on branched $\gamma$-rough paths is free, namely if $gX=g'X$ then $g=g'$.
\end{prp}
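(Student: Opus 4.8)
The plan is to prove $g=g'$ componentwise, showing $g^\tau=(g')^\tau$ for every $\tau\in\T_N$ by induction on the number of nodes $|\tau|$. The mechanism is the one already used in the proof of \Cref{pr:2}: the defining identity $\langle gX_{st},\tau\rangle=\langle g\bar X_{st},\psi(\tau)\rangle$, combined with $\psi(\tau)=\tau+\psi_{|\tau|-1}(\tau)$ from \Cref{lmm:HK}, splits $\langle gX_{st},\tau\rangle$ into a ``new'' contribution $\langle g\bar X_{st},\tau\rangle=\langle\bar X_{st},\tau\rangle+g^\tau_t-g^\tau_s$ coming from the single letter $\tau$ of the alphabet $\T_N$ (recall $g\bar X$ is the anisotropic rough path constructed in \Cref{thm:main2} over the shifted underlying paths $(x^\sigma+g^\sigma)_{\sigma\in\T_N}$), plus a ``lower order'' contribution $\langle g\bar X_{st},\psi_{|\tau|-1}(\tau)\rangle$ which by \Cref{prp:HKpart} only involves trees with strictly fewer nodes than $\tau$.

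First I would treat the base case $|\tau|=1$. Since $\psi(\dtR<i>)=\dtR<i>$ we get $\langle gX_{st},\dtR<i>\rangle=\langle X_{st},\dtR<i>\rangle+g^{\dtR<i>}_t-g^{\dtR<i>}_s$ and likewise for $g'$; the identity $gX=g'X$ then gives $g^{\dtR<i>}_t-g^{\dtR<i>}_s=(g')^{\dtR<i>}_t-(g')^{\dtR<i>}_s$ for all $s,t$, and since $g^{\dtR<i>}_0=(g')^{\dtR<i>}_0=0$ we conclude $g^{\dtR<i>}=(g')^{\dtR<i>}$.

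For the inductive step I would assume $g^\sigma=(g')^\sigma$ for all $\sigma\in\T_N$ with $|\sigma|\le n$ and fix a tree $\tau$ with $|\tau|=n+1$. The crucial point is that $g\bar X$ and $g'\bar X$ then agree on every word built only from letters in $\T_n$: both are produced by the canonical construction of \Cref{thm:main2} applied to the families $(x^\sigma+g^\sigma)_{\sigma\in\T_N}$ and $(x^\sigma+(g')^\sigma)_{\sigma\in\T_N}$, which coincide on $\T_n$ by the inductive hypothesis; iterating \Cref{crl:ext} to peel off the letters of size $>n$ one at a time — which is legitimate because the exponents $\gamma_\sigma=\gamma|\sigma|$ satisfy $1\notin\sum_\sigma\gamma_\sigma\N$ since $\gamma^{-1}\notin\N$ — shows that the restriction of each of $g\bar X$, $g'\bar X$ to $T^c(\T_n)$ equals the anisotropic rough path built over $(x^\sigma+g^\sigma)_{|\sigma|\le n}=(x^\sigma+(g')^\sigma)_{|\sigma|\le n}$. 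Since $\psi_n(\tau)\in T^c(\T_n)$ by \Cref{lmm:HK} and \Cref{prp:HKpart}, this yields $\langle g\bar X_{st},\psi_n(\tau)\rangle=\langle g'\bar X_{st},\psi_n(\tau)\rangle$, hence
\[
\langle gX_{st},\tau\rangle-\langle g'X_{st},\tau\rangle=\langle g\bar X_{st},\tau\rangle-\langle g'\bar X_{st},\tau\rangle=(g^\tau_t-g^\tau_s)-((g')^\tau_t-(g')^\tau_s).
\]
Using $gX=g'X$ together with $g^\tau_0=(g')^\tau_0=0$ forces $g^\tau=(g')^\tau$, which closes the induction and gives $g=g'$.

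I expect the only non-routine ingredient to be this middle step, namely the claim that $g\bar X$ and $g'\bar X$ agree on $T^c(\T_n)$ as soon as $g$ and $g'$ agree up to degree $n$; everything else is bookkeeping with the Hairer--Kelly map. This claim is exactly the sub-alphabet compatibility of the canonical aGRP construction — i.e. \Cref{crl:ext}, iterated — applied to the very construction used to \emph{define} the action on anisotropic, and hence on branched, rough paths, so once it is phrased at the right level of generality the argument is immediate. (Alternatively one may first reduce to the case $g'=0$ by acting with $-g'$, using \Cref{pr:1} and the obvious identity $0\cdot X=X$, and then prove that $hX=X$ forces $h=0$ by the same induction; this is cosmetically shorter but rests on the same input.)
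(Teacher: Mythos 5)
Your proof is correct and follows the same line as the paper's (much terser) argument: the paper simply remarks that by \eqref{eq:gdef} each $g^\tau$ is determined up to a constant, which the normalisation $g^\tau_0=0$ fixes, whereas you spell out the underlying induction on $|\tau|$ explicitly. The one ingredient you correctly flag as non-routine — that $g\bar X$ and $g'\bar X$ agree on $T^c(\T_n)$ once $g,g'$ agree up to degree $n$, via iterated use of \Cref{crl:ext} — is indeed what makes the paper's phrase "defined up to a constant shift" legitimate, since it guarantees the right-hand side of \eqref{eq:gdef} depends only on lower-order data.
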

\begin{proof}
  This follows from the fact that by \eqref{eq:gdef} the function $g^\tau$ is defined up to a constant shift. Therefore, the condition $g^\tau_0=0$ determines $g^\tau$ uniquely.
\end{proof}

Together, \Cref{pr:1}, \Cref{pr:2} and \Cref{pr:3} imply \Cref{thm:intro.trans}.

\subsection{The BCFP renormalisation}
\label{sss:BCFP}
In \cite{Bruned2017} a different kind of modification is proposed.
There, a new decoration 0 is considered so rough paths --branched and geometric-- are over paths taking values in $\R^{d+1}$.
Recall that since branched rough paths are seen as Hölder paths taking values in the character group of the Butcher-Connes-Kreimer Hopf algebra, we may think of them as an infinite forest series of the form
\begin{equation}
  X_{st} = \sum_{\tau\in\F}\langle X_{st},\tau\rangle\tau
  \label{eq:fseries}
\end{equation}
where we regard $\tau$ as a linear functional on $\HH$, such that $\langle\tau,\sigma\rangle=1$ if $\sigma=\tau$ and zero else.
The aforementioned modification procedure then acts as a translation of the series \eqref{eq:fseries}.
Specifically, for each collection $v=(v_0,\dotsc,v_d)\colon\T\to\R^{d+1}$ an operator $M_v\colon\HH^*\to\HH^*$ is defined, such that for a $\gamma$-branched rough path, $(M_vX)_{st}\coloneq M_v(X_{st})$ is a $\gamma/N$-branched rough path.

In the particular case where $v_j=0$ except for $v_0$, the action of this operator can be described in terms of an extraction/contraction map\footnote{In \cite{Bruned2017} this map is named $\delta$ but we choose to call it $\Psi$ in order to avoid confusion with the operator defined here.} $\Psi\colon\HH\to\HH\otimes\HH$.
This map acts on a tree $\tau$ by extracting subforests and placing them in the left factor; the right factor is obtained by contracting the extracted forest and decorating the resulting node with 0.
As an example, consider
\begin{align*}
  \Psi(\dtV<ijk>) &= \1\otimes\dtV<ijk>+\dtR<i>\otimes\dtV<0jk>+\dtR<j>\otimes\dtV<i0k>+\dtR<k>\otimes\dtV<ij0>+\dtI<ij>\otimes\dtI<0k>+\dtI<ik>\otimes\dtI<0j>\\
  &\quad+\dtR<i>\dtR<j>\otimes\dtV<00k>+\dtR<i>\dtR<k>\otimes\dtV<0j0>+\dtR<j>\dtR<k>\otimes\dtV<i00>+\dtI<ij>\dtR<k>\otimes\dtI<00>+\dtI<ik>\dtR<j>\otimes\dtI<00>+\dtV<ijk>\otimes\dtR<0>.
\end{align*}
Extending $v=v_0\colon\T\to\R$ to all of $\HH^*$ as an algebra morphism it is shown that
\begin{equation}
  \langle(M_vX)_{st},\tau\rangle = \langle X_{st},(v\otimes{\id})\Psi(\tau)\rangle.
  \label{eq:BCFP}
\end{equation}
Furthermore, in this case $M_vX$ is a $\gamma$-branched rough path if coefficients corresponding to trees with decoration zero are required to satisfy the stronger analytical condition
\begin{equation}
  \sup_{0\le s,t\le 1}\frac{|\langle X_{st},\tau\rangle|}{|t-s|^{(1-\gamma)|\tau|_0+\gamma|\tau|}}<\infty,
  \label{eq:BCFPbound}
\end{equation}
where $|\tau|_0$ counts the times the decoration $0$ appears in $\tau$.
Essentially, this condition imposes that the components corresponding to the zero decoration be Lipschitz on the diagonal $s=t$.

We now show how this setting can be recovered from the results of \Cref{sss:mod}.
Let $X$ be a $\gamma$-branched rough path on $\R^{d+1}$ satisfying \eqref{eq:BCFPbound}.
Since $M_vX$ is again a $\gamma$-branched rough path, by \Cref{thm:trans} there exists a collection of functions $g\in\C^\gamma$ such that $gX=M_vX$.
Moreover, this collection is the unique one satisfying
\begin{equation}
  g_t^\tau-g_s^\tau= \langle X_{st},(v\otimes{\id})\Psi(\tau)\rangle - \langle\bar X_{st},\tau\rangle - \langle g\bar{X}_{st},\psi_{|\tau|-1}(\tau)\rangle
  \label{eq:gBCFP}
\end{equation}
for all $\tau\in\T(\R^{d+1})$ where we have used \eqref{eq:BCFP} in order to express $M_vX$ in terms of $\Psi$.
Theorem 28 in \cite{Bruned2017} ensures that the first term on the right-hand side is in $C_2^{\gamma|\tau|}$ hence $g$ is actually in $C^{\gamma|\tau|}$ as required.

The approach of \cite{Bruned2017} is based on pre-Lie morphisms and crucially on a {\it cointeraction property}, which has been explored by \cite{Calaque2011}, see in particular \cite[Lemma 18]{Bruned2017}. The cointeraction property can be used for {\it time-independent} modifications, indeed note that the functional $v$ in \cite{Bruned2017} is always constant.

Let us see why this is the case. The approach of \cite{Bruned2017} is based on a cointeraction
property studied by \cite{BHZ,Calaque2011,Foissy2016} between the Butcher-Connes-Kreimer coproduct and another
\textsl{extraction-contraction} coproduct $\delta\colon\HH\to \HH\otimes \HH$. The formula is the following
\[
({\rm id}\otimes \Delta)\delta = {\mathcal M}_{1,3}(\delta\otimes\delta)\Delta.
\]
Let us consider now a character $v\in\HH^*$.
If we multiply both sides by $(v\otimes{\rm id}\otimes{\rm id})$ and set $M_v^*=(v\otimes{\rm id})\delta\colon\HH\to \HH$ as in \cite[Proposition 17]{Bruned2017}, then we obtain
\[
\Delta\,M_v^*=(M_v^*\otimes M_v^*)\Delta,
\]
namely $M_v^*$ is a coalgebra morphism on $\HH$. Then one can define a modified rough path as
$vX\coloneq M_vX=X\circ M_v^*$. The crucial Chen property is still satisfied since
\begin{align*}
  (vX)_{st} & = (v\otimes X_{st})\delta = (v\otimes X_{su}\otimes X_{ut})({\rm id}\otimes \Delta)\delta\\
  & = (v\otimes X_{su}\otimes X_{ut}){\mathcal M}_{1,3}(\delta\otimes\delta)\Delta\\
  &= ((v\otimes X_{su})\otimes(v\otimes X_{ut}))(\delta\otimes\delta)\Delta\\
  &= ((vX)_{su}\otimes (vX)_{ut})\Delta
\end{align*}
However this does not work if $v\colon[0,1]^2\to\HH^*$ is a time-dependent character. Indeed in this case
we set $(vX)_{st}\coloneq(v_{st}\otimes{\rm id})\delta$ and we obtain
\begin{align*}
  (vX)_{st} & = (v_{st}\otimes X_{st})\delta = (v_{st}\otimes X_{su}\otimes X_{ut})({\rm id}\otimes \Delta)\delta\\
  &= (v_{st}\otimes X_{su}\otimes X_{ut}){\mathcal M}_{1,3}(\delta\otimes\delta)\Delta\\
  &= ((v_{st}\otimes X_{su})\otimes(v_{st}\otimes X_{ut}))(\delta\otimes\delta)\Delta
\end{align*}
but we can not conclude that this is equal to $((vX)_{su}\otimes (vX)_{ut})\Delta$. Our construction, as explained after
formula \eqref{eq:inte}, is not purely algebraic but is based on a (non-canonical) choice of generalized Young integrals
with respect to the rough path $X$.
Moreover our transformation group, infinite-dimensional, is much larger than that finite-dimensional group studied
in \cite{Bruned2017}.

\section{Perspectives}
In this paper we have shown that the space of branched $\gamma$-rough paths is a principal
homogeneous space with respect to the linear group $\cal C^\gamma$. This is related to the analytical
properties of the operator $\delta$ defined in \eqref{eq:delta}, which is invertible under the conditions of Gubinelli's Sewing Lemma, but not in general, and in particular not in the context
of the Chen relation on trees with low degree.

It would be now interesting to see how this action can be translated on the level of controlled paths \cite{Gubinelli2003}. The space of paths controlled by a rough path $X\in\BR^\gamma$ should be interpreted as the
tangent space to $\BR^\gamma$ at $X$, and the action on rough paths should induce an action on controlled paths. In particular it should be possible to write an action on solutions to rough differential equations.

The proof of \Cref{thm:trans}, and in particular \eqref{eq:gdef}, gives a recursive way of computing the unique $g\in\C^\gamma$ translating a given branched $\gamma$-rough path into another.
An interesting feature of the BCFP scheme is that is given in terms of a coaction so explicit calculations are somewhat easier in this more restricted case as one can compute $g^\tau$ for each tree $\tau\in\T_N$ directly by extracting and contracting subforests of $\tau$ without doing any recursions (see \eqref{eq:gBCFP}.)
However, we do not have a computational rule for an important case: suppose that $\bb X$ is branched rough path lift of a stochastic process with a.s. $C^{\gamma-}$ trajectories; it would be nice to have a way of finding $g\in\C^\gamma$ such that $g\bb X$ is centered with respect to the underlying distribution of the process, provided this is possible.
Even this last problem, namely giving precise conditions under which this centering is possible is interesting in itself.
This should be related to the notion of Wick polynomials and deformations of products as considered in \cite{EFPTZ}.

More generally, in the physics literature there are various renormalisation procedures which allow to obtain convergent iterated integrals from divergent ones by subtracting suitable “counterterms”. In the context of rough paths, implementing one of the most accepted such procedures due to Bogoliubov--Parasiuk--Hepp--Zimmermman (BPHZ) has been carried out by J. Unterberger in \cite{Unterberger2010a,Unterberger2013} by means of the Fourier normal ordering algorithm and using a technique relating the trees in the Butcher--Connes--Kreimer Hopf algebra to certain Feynman diagrams.
In our context, this could provide a canonical choice for $g\in\C^\gamma$ implementing the BPHZ renormalization procedure in a way analogous to what is done in \cite{BHZ} for Regularity Structures.

\setlength{\parskip}{0ex}
\let\bibliofont\footnotesize
\bibliographystyle{arxiv}
\bibliography{BCH}
\vskip 4ex minus 3ex
\end{document}